\documentclass[reqno]{amsart}

\synctex=1

\usepackage{amsmath,amssymb,amsthm,amsfonts,enumerate, enumitem,hyperref,cleveref}
\usepackage{dsfont}
\usepackage[all]{xy}
\usepackage[T1]{fontenc}
\usepackage{tikz}
\usepackage{pgfplots}
\pgfplotsset{compat=1.15}
\usepackage{mathrsfs}
\usepackage{tikz-cd}
\usetikzlibrary{arrows}

\usepackage{mathtools}

\usepackage{amsmath,environ}
\NewEnviron{splii}{%
\begin{equation*}\begin{split}
  \BODY    
\end{split}\end{equation*}
}



\DeclareMathOperator{\dom}{dom}

\newcommand{\Z}{\mathbb{Z}}

\newcommand{\K}{\mathbb{K}}
\newcommand{\C}{\mathscr{C}}
\newcommand{\D}{\mathscr{D}}

\newcommand{\datum}[1]{M\mathbf{-Datum}_{#1}}
\newcommand{\act}[1]{M\mathbf{-Act}_{#1}}
\newcommand{\pact}[1]{M\mathbf{-pAct}_{#1}}
\newcommand{\spact}[1]{M\mathbf{-spAct}_{#1}}
\newcommand{\Hom}[1]{\mathrm{Hom}_{#1}}

\DeclareMathOperator{\Aut}{Aut}

\DeclareMathOperator{\Span}{\mathbf{Span}}
\DeclareMathOperator{\spa}{\mathbf{span}}
\DeclareMathOperator{\Par}{\mathbf{Par}}
\DeclareMathOperator{\pa}{\mathbf{par}}

\DeclareMathOperator{\Set}{\mathbf{Set}}
\DeclareMathOperator{\Top}{\mathbf{Top}}

\newtheorem{teorema}{Theorem}[section]
\newtheorem{corolario}[teorema]{Corollary}
\newtheorem{lema}[teorema]{Lemma}
\newtheorem{proposicao}[teorema]{Proposition}

\theoremstyle{definition}
\newtheorem{definicao}[teorema]{Definition}
\newtheorem{exemplo}[teorema]{Example}

\newtheorem{rem}[teorema]{Remark}

\crefrangeformat{equation}{#3(#1)#4--#5(#2)#6}

\crefname{teorema}{Theorem}{Theorems}
\crefname{lema}{Lemma}{Lemmas}
\crefname{corolario}{Corollary}{Corollaries}
\crefname{proposicao}{Proposition}{Propositions}
\crefname{definicao}{Definition}{Definitions}
\crefname{exemplo}{Example}{Examples}
\crefname{rem}{Remark}{Remarks}
\crefname{section}{Section}{Sections}

\crefname{equation}{\unskip}{\unskip}
\crefname{enumi}{\unskip}{\unskip}

\newcommand{\af}{\alpha}
\newcommand{\bt}{\beta}

\newcommand{\vf}{\varphi}

\renewcommand{\iff}{\Leftrightarrow}

\newcommand{\red}[1]{#1}
\newcommand{\blue}[1]{#1}

\begin{document}
        \title[Partial monoid actions]{Partial monoid actions on objects in categories with pullbacks and their globalizations}	
	
	\author{Mykola Khrypchenko}
	\address{Departamento de Matem\'atica, Universidade Federal de Santa Catarina,  Campus Reitor Jo\~ao David Ferreira Lima, Florian\'opolis, SC, CEP: 88040--900, Brazil \and CMUP, Departamento de Matemática, Faculdade de Ciências, Universidade do Porto,
		Rua do Campo Alegre s/n, 4169--007 Porto, Portugal}
	\email{nskhripchenko@gmail.com}
	
	\author{Francisco Klock}
	\address{Departamento de Matem\'atica, Universidade Federal de Santa Catarina, Florian\'opolis, SC, CEP: 88040-900, Brazil}
	\email{francisco\_gabriel25@hotmail.com}
 
	\subjclass[2020]{Primary: 16W22, 18B10; secondary: 20M30, 18A40}
	\keywords{partial action, monoid, category with pullbacks, globalization, reflection}
	
	\begin{abstract}
		Let $M$ be a monoid, $\C$ a category with pullbacks and $X$ an object of $\C$. We introduce the notion of a partial action $\af$ of $M$ on $X$ and study the globalization question for $\af$. If $\af$ admits a reflection in the subcategory of global actions, then we reduce the problem to the verification that a certain diagram is a pullback in $\C$. We then give a construction of such a reflection in terms of a colimit of a certain functor with values in $\C$. We specify this construction to the case of categories admitting certain coproducts and coequalizers.
	\end{abstract}
	
	\maketitle
	
	\tableofcontents
	
	\section*{Introduction}
	
Partial group actions are a generalization of classical (global) group actions that deals with partial symmetries of an object. The systematic study of this notion began in the middle of the 1990s in \cite{exel1994acao-parcial-origem,McClanahan95,exel_partial_1998}, although the concept itself had appeared earlier in various areas of mathematics under different names and in different generalities, for example, in differential geometry~\cite{Palais} and topology~\cite{Megrelishvili86}. Partial group actions have also been known as an ingredient in the description of $E$-unitary inverse semigroups~\cite{McAlister74-II,Petrich-Reilly79}. Notice that Green and Marcos used the name ``partial action'' for a related notion in~\cite{Green-Marcos94}. We refer the reader to the survey papers~\cite{Dokuchaev-survey,Batista-survey} containing an extensive literature on the subject.

A lot of examples of partial actions come from restrictions of global actions, so it is natural to ask whether any partial action can be obtained this way. This is always the case for partial group actions on sets~\cite{Abadie03,kellendonk2004partial}. The corresponding global action is called a \textit{globalization} or an \textit{enveloping action} of the given partial action. The question of existence of a globalization was studied in the context of groups acting partially on topological spaces by Abadie~\cite{Abadie03} and by Kellendonk and Lawson~\cite{kellendonk2004partial}, on cell complexes by Steinberg~\cite{Steinberg2003}, on (associative) algebras by Dokuchaev and Exel~\cite{DE}, on $s$-unital rings by Dokuchaev, del R\'io and Sim\'on~\cite{DRS}, on semiprime rings by Ferrero and coauthors~\cite{Ferrero06,Cortes-Ferrero09,Bemm-Ferrero13}, on abstract algebraic systems (with applications to the case of semigroups) by Khrypchenko and Novikov~\cite{KN16}, on ($R$-linear) semicategories~\cite{Cortes-Ferrero-Marcos16} by Cortes, Ferrero and Marcos.

Megrelishvili and Schr\"oder~\cite{Megrelishvili_2004} were the first who introduced and studied partial monoid actions. They showed that strong and \textit{confluent} (a notion depending on the monoid presentation) partial actions of a monoid on a topological space are globalizable. Hollings~\cite{PartMonoids} considered arbitrary partial monoid actions on sets and proved that the strong ones are exactly the globalizable ones (his globalization also respects a fixed set of generators of $M$ in the spirit of~\cite{kellendonk2004partial}). Kudryavtseva and Laan~\cite{Kudryavtseva-Laan23} studied two different notions of a globalization of a partial semigroup action on a set, one of them being the initial object in the corresponding category of globalizations (its construction generalizes that of~\cite{Megrelishvili_2004,PartMonoids}), and the other one being the final object in the same category. We point out that partial monoid actions play an important role in the description of proper two-sided restriction semigroups~\cite{Cornock-Gould,Kudryavtseva-PMA}.

Hu and Vercruysse~\cite{geopactions} proposed a unified approach to treat various types of partial actions (and coactions) by introducing partial actions of algebras in monoidal categories with pullbacks. This naturally leads to the globalization problem that was studied by Saracco and Vercruysse in~\cite{saracco2022globalization}, where the globalizable partial actions were characterized in terms of equalizers and pushout diagrams in the corresponding categories. In~\cite{saracco2022globalization2}, the general globalization theorem was specified to the case of partial actions of topological monoids on topological spaces generalizing the results of~\cite{Abadie03,Megrelishvili_2004}, as well as to the case of partial comodule algebras over bialgebras establishing a connection with the enveloping coaction in the sense of~\cite{Alves-Batista11}. On the other hand, in~\cite{Saracco-Vercruysse23flat}, geometric partial comodules over flat coalgebras in Abelian monoidal categories were shown to be globalizable, recovering as particular cases the results of~\cite{Alves-Batista-Vercruysse19,D'Adderio-Hautekiet-Saracco-Vercruysse22}.

However, there are several classes of partial actions that have been studied in the literature, but not yet covered by the general theory of~\cite{geopactions}. These are, for example, partial group actions on associative rings and algebras, C$^*$-algebras, semigroups, etc. Although the corresponding categories admit a natural monoidal structure, it is not clear, a priori, how to realize a group as a monoid there.

In this paper we develop a theory of partial monoid actions on objects in categories with pullbacks. Observe that we do not assume any monoidal structure on the category under consideration and any relation between the category and the monoid. 

In~\cref{sec-prelim} we fix some notations and recall the notions that will be used in the sequel. \cref{correspondencia-dadosdeacao-set}, which is inspired by \cite[Lemma 1.7]{geopactions}, serves as a motivation to introduce more general concepts below.

Given a monoid $M$ and an object $X$ of a category with pullbacks $\C$, in~\cref{sec-pact} we define the concept of a \textit{partial action datum} and those of a \textit{partial} and a \textit{global action} of $M$ on $X$ together with the suitable notion of a morphism between partial action data to form the categories $\datum\C$, $\pact\C$ and $\act\C$, respectively. 

Any global action of $M$ on $Y$ \textit{restricts} to a partial action of $M$ on $X$, provided that there is a monomorphism $\iota:X\to Y$, as shown in \cref{sec-restr}. In fact, the resulting partial action datum satisfies even a stronger condition than an arbitrary partial action does, motivating us to introduce the concept of a \textit{strong} partial action which is a generalization of that coming from~\cite{Megrelishvili_2004,PartMonoids}. We give \cref{exm-non-strong-pa,exemplo-esp-top-acao-parcial-forte}
showing that not any partial action is strong and not any strong partial action can be obtained as a restriction of a global action. 

\cref{sec-glob} is the main part of the paper. We introduce the notion of a \textit{globalization} $(\bt,\iota)$ of a partial action datum $\af$. Notice that, unlike~\cite{saracco2022globalization}, we do not require the corresponding datum morphism $\iota:\af\to\bt$ to be a \textit{reflection} of $\af$ in $\act\C$. But, whenever a reflection $\iota:\af\to\bt$ of $\af$ in $\act\C$ exists, \cref{equivalencia-glob-universal-glob} gives a criterion for $(\bt,\iota)$ to be a (universal) globalization of $\af$ in terms of a pullback diagram in $\C$ resembling (the dual of) that from~\cite[Theorem 3.5 (II)]{saracco2022globalization}. Although, in general, not any universal globalization is of this form (see \cref{exemplo-glob-sem-reflexao} for a globalizable partial action that admits no reflection in $\act\C$), in many classical cases it is indeed possible to construct a reflection of $\af$ in $\act\C$ out of a colimit of certain functor with values in $\C$ (see \cref{colimite-implica-reflexao}). In particular, if $\C$ admits certain coproducts, then the existence of a reflection of $\af$ in $\act\C$ is equivalent to the existence of a coequalizer of some pair of parallel morphisms in $\act\C$, as proved in \cref{reflector<->coequalizer}. So, we get a condition similar to (the dual of)~\cite[Theorem 3.5 (I)]{saracco2022globalization}.

We finish the paper applying in \cref{sec-pact-Set} our general results to the case $\C=\Set$. As a consequence, we recover in \cref{af-glob-iff-af-strong-for-C=Set} Hollings's result on the globalization of a strong partial monoid action. The volume of the paper does not permit us to include some other classical cases, so they will be published in a separate paper that is currently under preparation.

	\section{Preliminaries}\label{sec-prelim}


\subsection{Partial actions of monoids on sets}

For the entirety of this article, if otherwise is not stated, $M$ will stand for a monoid with identity element $e$.


\begin{definicao}\label{dado-acao-parcial-set-def}
Following \cite{geopactions}, we define a \textit{partial action datum} of $M$ on a set $X$ to be a family of maps $\{\alpha_m : \dom\alpha_m \to X\}_{m \in M}$ where $\dom\alpha_m \subseteq X$, for all $m \in M$.
\end{definicao}

\begin{definicao}\label{acao_parcial_em_conjuntos}
A \textit{partial action} of $M$ on a set $X$ is a partial action datum $\alpha = \{\alpha_m \}_{m \in M}$ of $M$ on $X$, such that:
\begin{enumerate}[label=(PA\arabic*)]
    \item\label{AP1 set} $X_e = X$ and $\alpha_e = id_X$;
    
    \item\label{AP2 set} $\alpha_m^{-1}(\dom\alpha_n) \subseteq \dom\alpha_{nm}$, for all $m,n \in M$;
    
    \item\label{AP3 set} $\alpha_n \circ \alpha_m = \alpha_{nm}$ on $\alpha_m^{-1}(\dom\alpha_n)$, for all $m, n \in M$.
\end{enumerate}
If $\dom\alpha_m=X$ for all $m\in M$, then we say that $\alpha$ is a \textit{global}\footnote{Note that a global action of $M$ on $X$ is the same as a usual (classical) action of $M$ on $X$.} action of $M$ on $X$.
\end{definicao}

\begin{rem}
    Taking $M$ to be a group and denoting $\dom\alpha_m$ by $X_{m^{-1}}$, we see that \cref{AP2 set,AP3 set} are equivalent to (LPA2) and (LPA3) of \cite[Definition~1.2]{geopactions}, respectively. \red{Thus, partial monoid actions of a group $G$ on a set $X$ in the sense of \cref{acao_parcial_em_conjuntos} are exactly \textit{lax partial actions} of $G$ on $X$ in the sense of \cite[Definition~1.2]{geopactions}.} On the other hand, for an arbitrary monoid $M$, \cref{AP2 set,AP3 set} together are equivalent to \cite[Definition~2.2~(PA2')]{PartMonoids}. 

\end{rem}



   
\begin{exemplo}\label{exemplo global}
Given a global action $\beta$ of $M$ on a set $Y$ and $X \subseteq Y$, the following partial action datum
   $$\alpha = \{\alpha_m : \dom\alpha_m = X \cap \beta_m^{-1}(X) \to X\}_{m \in M}, \quad \alpha_m(x) = \beta_m(x), \forall x\in\dom\alpha_m,$$
   is a partial action of $M$ on $X$.
   
\end{exemplo}

\begin{definicao}\label{restriction-definicao-set}
    Let $\beta$ be a global action of $M$ on a set $Y$ and $X \subseteq Y$. The partial action $\alpha$ constructed in \cref{exemplo global} is called the \textit{restriction} of $\beta$ to $X$.
\end{definicao}

\begin{definicao}\label{acao_parcial_forte_em_conjuntos}
A partial action $\alpha = \{\alpha_m \}_{m \in M}$ of $M$ on a set $X$ is said to be \textit{strong} if instead of \cref{AP2 set} we have the following stronger condition:
\begin{enumerate}[label=(PA2')]
    \item\label{AP2 set strong} $\alpha_m^{-1}(\dom\alpha_n) = \dom\alpha_{nm} \cap \dom\alpha_m$, for all $m,n \in M$.
\end{enumerate}
\end{definicao}

\begin{rem}
    The concept of a strong partial monoid action from \cref{acao_parcial_forte_em_conjuntos} is equivalent to that of \cite[Definition~2.4]{PartMonoids}, \cite[Definition~2.3]{Megrelishvili_2004} \red{and \cite[Section 2.1]{saracco2022globalization2} (although in the last two references it is called just a partial action). Observe that the partial monoid actions from \cite[Section 2.1]{saracco2022globalization2} are a particular case of the so-called \textit{(geometric) partial modules} (see \cite[Remark 2.1]{saracco2022globalization2}). As it was proved in \cite[Proposition 2.4]{Megrelishvili_2004}, partial actions of a group $G$ on a set $X$ in the sense of \cite[Definition 1.1]{DE} are exactly strong partial monoid actions of $G$ on $X$ in the sense of \cref{acao_parcial_forte_em_conjuntos}.}
    
    It should be noted that the partial action $\alpha$ from~\cref{exemplo global} is strong. Moreover, it was proved in \cite{PartMonoids} and (in the context of partial monoid actions on topological spaces) in \cite{Megrelishvili_2004} that any strong partial action can be obtained as a restriction of a global action.
\end{rem}

The next notion is a natural generalization of a well-known notion of a morphism of partial group actions (see, for example,~\cite{Abadie03}).

\begin{definicao}
                Given two partial action data $\alpha = \{\alpha_m\}_{m \in M}$ of $M$ on $X$ and $\beta = \{\beta_m\}_{m \in M}$ of $M$ on $Y$, a \textit{datum morphism} from $\alpha$ to $\beta$ is a map $f \colon X \rightarrow Y$ such that
\begin{enumerate}
    \item $f(\dom\alpha_m) \subseteq \dom\beta_m$ for all $m \in M$;
    
    \item $f \circ \alpha_m = \beta_m \circ f$ on $\dom\alpha_m$ for all $m \in M$.
\end{enumerate}
\end{definicao}

\subsection{Spans and partial morphisms}

Given a category $\C$, we will use the same \red{character} $\C$ to denote the class of objects of $\C$. For any pair of objects $X, Y \in \C$ we will denote by $\Hom{\C}(X,Y)$ the collection of morphisms from $X$ to $Y$ in $\C$. As usual, $id_X$ will mean the identity morphism at $X \in \C$ and $g\circ f:X\to Z$ the composition of $f:X\to Y$ and $g:Y\to Z$. 

For the remainder of this section, fix a category $\C$ and $X,Y\in\C$. 

\begin{definicao}
    A \textit{span}~\cite{Benabou-bicategories} from $X$ to $Y$ is a triple $(A,f,g)$ where $A \in \C$ and $f: A \rightarrow X$ and $g: A \rightarrow Y$ are morphisms, as illustrated.
    \begin{center}
    \begin{tikzcd}
  & A \arrow[ld, "f"'] \arrow[rd, "g"] &   \\
X &                                    & Y
\end{tikzcd}
\end{center}
\end{definicao}

\begin{definicao}
        Given $(A,f,g)$ and $(B,h,k)$ spans from $X$ to $Y$, a \textit{morphism of spans} from $(A,f,g)$ to $(B,h,k)$ is a morphism $\varphi \colon A \rightarrow B$ in $\mathscr{C}$ such that the following diagram commutes.
\begin{center}
   \begin{tikzcd}[ampersand replacement=\&, row sep = small]
  \& A \arrow[ldd, "f"'] \arrow[rdd, "g"] \arrow[d, "\varphi"] \&   \\
  \& B \arrow[ld, "h"] \arrow[rd, "k"']                       \&   \\
X \&                                                          \& Y
\end{tikzcd}
\end{center}
\end{definicao}

The spans from $X$ to $Y$ form a category $\Span_{\C}(X,Y)$, whose morphisms are morphisms of spans and the composition is inherited from $\C$. Denote by $[A,f,g]$ the isomorphism class of a span $(A,f,g)$ from $X$ to $Y$, and let $\spa_{\C}(X,Y) = \{[A,f,g] : (A,f,g) \in \Span_{\C}(X,Y)\}$.


\begin{definicao}
    A \textit{partial morphism} from $X$ to $Y$ is a span $(A,f,g)$ where $f$ is a monomorphism.
\end{definicao}

Denote by $\Par_{\C}(X,Y)$ the full subcategory of $\Span_{\C}(X,Y)$ whose objects are partial morphisms, and by $\pa_{\C}(X,Y)$ the corresponding subclass of $\spa_{\C}(X,Y)$. \red{Observe that there is at most one morphism between any two objects in $\Par_{\C}(X,Y)$.}

\begin{proposicao}\label{partial-morphisms-in-set}
    Let $\C = \Set$ and $(A,f,g) \in \Par_{\C}(X,Y)$. Then the isomorphism class $[A,f,g]$ has exactly one representative $(B,\iota,h)$ where $B \subseteq X$ and $\iota$ is the respective inclusion.
\end{proposicao}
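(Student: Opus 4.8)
The plan is to establish existence and uniqueness of such a representative separately; both parts are of a formal, set-theoretic nature.

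For existence, I would take $B := f(A) \subseteq X$ and let $\iota\colon B\to X$ be the inclusion. Since $f$ is a monomorphism in $\Set$, it is injective, so its corestriction $\bar f\colon A\to B$ is a bijection; writing $\bar f^{-1}\colon B\to A$ for the inverse, I set $h := g\circ\bar f^{-1}\colon B\to Y$. Then $(B,\iota,h)$ is a span from $X$ to $Y$, and $\bar f$ satisfies $\iota\circ\bar f = f$ and $h\circ\bar f = g\circ\bar f^{-1}\circ\bar f = g$, so $\bar f$ is a morphism of spans from $(A,f,g)$ to $(B,\iota,h)$. As $\bar f$ is an isomorphism in $\Set$, composing these two relations with $\bar f^{-1}$ shows that $\bar f^{-1}$ is a morphism of spans from $(B,\iota,h)$ to $(A,f,g)$; hence $(B,\iota,h)\cong(A,f,g)$ and $(B,\iota,h)$ represents $[A,f,g]$.

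For uniqueness, suppose $(B,\iota,h)$ and $(B',\iota',h')$ both represent $[A,f,g]$ with $B,B'\subseteq X$ and $\iota,\iota'$ the respective inclusions. Then there is an isomorphism of spans $\varphi\colon B\to B'$, i.e.\ a bijection with $\iota'\circ\varphi=\iota$ and $h'\circ\varphi=h$. The first identity says $\varphi(b)=b$ for every $b\in B$, whence $B=\varphi(B)\subseteq B'$; applying the same reasoning to $\varphi^{-1}$ gives $B'\subseteq B$. Therefore $B=B'$ and $\varphi=\id_B$, and then $h'\circ\varphi=h$ forces $h'=h$, so $(B',\iota',h')=(B,\iota,h)$.

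The argument involves no real difficulty; the only point that deserves a line of care is the general fact, used in the existence part, that a morphism of spans whose underlying morphism is an isomorphism in $\C$ is itself an isomorphism of spans — its inverse in $\C$ is again a morphism of spans, as one sees by composing the defining commutativity relations with that inverse.
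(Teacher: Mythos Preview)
Your proof is correct and follows essentially the same approach as the paper: both take $B=f(A)$ with the inclusion, define $h$ via $g$ composed with the inverse of the corestricted $f$, and exhibit the corestriction as the required isomorphism of spans. Your write-up is in fact slightly more careful, spelling out the uniqueness argument that the paper leaves as a simple verification.
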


\begin{proof}
    Let $B = f(A) \subseteq X$ and $\iota$ be the inclusion of $B$ in $X$. Let $h : B \rightarrow Y$ be the map given by $h(x) = g(a)$, if $x = f(a)$ for some $a \in A$, which is \red{well defined}, since $f$ is a monomorphism in $\Set$.

    It is then easily checked that the map $\varphi : A \rightarrow B$ such that $\varphi(a) = f(a)$ for each $a \in A$ is an isomorphism of spans from $(A,f,g)$ to $(B,\iota,h)$, so that $[A,f,g] = [B,\iota,h]$. The uniqueness is also a simple verification.
\end{proof}

The following result is a slight modification and, at the same time, a generalization of \cite[Lemma 1.7]{geopactions}.

\begin{proposicao}[see Lemma 1.7 from \cite{geopactions}]\label{correspondencia-dadosdeacao-set}
Let $\C = \Set$ and $X\in\C$. There is a one-to-one correspondence between
\begin{enumerate}
    \item partial action data of $M$ on $X$;
    
    
    \item maps from $M$ to $\pa_{\C}(X,X)$.
\end{enumerate}
\end{proposicao}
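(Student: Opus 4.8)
The plan is to exhibit the correspondence concretely using \cref{partial-morphisms-in-set}, which gives a canonical representative for each element of $\pa_{\C}(X,X)$ when $\C = \Set$. First I would observe that a partial action datum $\alpha = \{\alpha_m : \dom\alpha_m \to X\}_{m \in M}$ of $M$ on $X$ is, by \cref{dado-acao-parcial-set-def}, precisely a choice, for each $m \in M$, of a subset $\dom\alpha_m \subseteq X$ together with a map $\alpha_m : \dom\alpha_m \to X$. On the other hand, a map $\Phi : M \to \pa_{\C}(X,X)$ assigns to each $m \in M$ an isomorphism class $[A_m, f_m, g_m]$ of partial morphisms from $X$ to $X$.

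The key step is to define the two assignments and check they are mutually inverse. Given a partial action datum $\alpha$, send it to the map $m \mapsto [\dom\alpha_m, \iota_m, \alpha_m]$, where $\iota_m : \dom\alpha_m \hookrightarrow X$ is the inclusion; since an inclusion is a monomorphism in $\Set$, this is a legitimate element of $\pa_{\C}(X,X)$. Conversely, given $\Phi : M \to \pa_{\C}(X,X)$, use \cref{partial-morphisms-in-set} to pick for each $m$ the unique representative $(B_m, \iota_{B_m}, h_m)$ of $\Phi(m)$ with $B_m \subseteq X$ and $\iota_{B_m}$ the inclusion, and set $\dom\alpha_m := B_m$ and $\alpha_m := h_m$. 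This produces a partial action datum. That the first assignment followed by the second returns the original $\alpha$ is immediate, because $(\dom\alpha_m, \iota_m, \alpha_m)$ is already in the canonical form of \cref{partial-morphisms-in-set} (its first leg is an inclusion of a subset of $X$), so it is \emph{the} distinguished representative of its class. That the second followed by the first returns the original $\Phi$ follows from the uniqueness part of \cref{partial-morphisms-in-set}: the class $[B_m, \iota_{B_m}, h_m]$ is sent to $[B_m, \iota_{B_m}, h_m]$ again.

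I expect the only subtlety — and it is mild — to be bookkeeping about what data genuinely determines a partial action datum: one must be slightly careful that a partial action datum is literally the pair $(\{\dom\alpha_m\}_{m}, \{\alpha_m\}_{m})$, so that two data are equal iff all domains and all maps agree, matching exactly the uniqueness clause of \cref{partial-morphisms-in-set}. There is no hard analytic or combinatorial obstacle here; the proposition is essentially a repackaging of \cref{partial-morphisms-in-set}, and the proof is a short verification that the two constructions are inverse bijections on the level of sets (no functoriality is claimed). Accordingly I would keep the write-up brief, invoking \cref{partial-morphisms-in-set} for both existence and uniqueness of canonical representatives and leaving the two round-trip checks as the routine verifications they are.
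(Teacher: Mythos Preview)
Your proposal is correct and follows essentially the same approach as the paper: both directions of the bijection are defined exactly as you describe, and the key ingredient in both is \cref{partial-morphisms-in-set} providing the unique canonical representative with an inclusion as first leg. The paper's write-up is in fact terser than yours, omitting the explicit round-trip checks that you spell out.
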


\begin{proof}
    To each partial action datum $\{\alpha_m\}_{m \in M}$ we associate the map $\alpha : M \rightarrow \pa_{\C}(X,X)$ given by $\alpha(m) = [\dom\alpha_m,\iota_m,\alpha_m]$, where $\iota_m : \dom\alpha_m \rightarrow X$ is the inclusion of $\dom\alpha_m$ on $X$.

    This association has the following inverse: given $\alpha : M \rightarrow \pa_{\C}(X,X)$, for each $m \in M$ take $(\dom\alpha_m,\iota_m,\alpha_m)$ to be the unique representative for $\alpha(m)$ where $\dom\alpha_m \subseteq X$ and $\iota_m$ is the respective inclusion (see \cref{partial-morphisms-in-set}). We then associate $\alpha$ to the partial action datum $\{\alpha_m\}_{m \in M}$.
\end{proof}


\begin{rem}
In \cite[Lemma 1.7]{geopactions}, in the setting of partial group actions, the authors constructed a correspondence analogous to that of \cref{correspondencia-dadosdeacao-set}, but with $\Par_{\Set}(X,X)$ instead of $\pa_{\Set}(X,X)$. However, it fails to be surjective, as monomorphisms in $\Set$ are not necessarily inclusions of subsets.
\end{rem}

\begin{definicao}
    Let $\C$ be a category, $\D$ a subcategory of $\C$ and $X \in \C$. A \textit{reflection} of $X$ in $\D$ (or a \textit{$\D$-reflection} of $X$) is a morphism $r : X \to Y$ with $Y \in \D$ such that for any $f \in \Hom{\C}(X,Z)$ with $Z \in \D$ there is a unique $f' \in \Hom{\D}(Y,Z)$ such that the following diagram commutes. 
    \begin{equation*}
        \begin{tikzcd}
X \arrow[rd, "f"'] \arrow[r, "r"] & Y \arrow[d, "f'", dashed] \\
                                  & Z                        
\end{tikzcd}
    \end{equation*}

    In this situation, we say that $X$ has a reflection in $\D$.
\end{definicao}

\section{Partial monoid actions on objects in categories with pullbacks}\label{sec-pact}

\blue{Throughout this work we assume that $M$ is} a monoid and $\C$ a category with pullbacks, \blue{although everything works for a wider class of categories, admitting pullbacks of any two morphisms of which one is a \textit{monomorphism} (this more general case is only considered in \cref{exemplo-glob-sem-reflexao} below).}

\cref{correspondencia-dadosdeacao-set} motivates the following definition.

\begin{definicao}
A \textit{partial action datum} of $M$ on $X \in \C$ is a map $\alpha : M \rightarrow \pa_{\C}(X,X)$.
\end{definicao}


We can describe global actions on objects in $\C$ in terms of partial action data as follows:


\begin{definicao}
    A \textit{global action} of $M$ on $X \in \C$ is a partial action datum $\alpha$ of $M$ on $X$ such that:
    \begin{enumerate}[label=(CGA\arabic*)]
        \item\label{AG1} $\alpha(e) = [X,id_X,id_X]$;

        \item\label{AG2} \red{$\alpha(m) = [X,id_X,\alpha_m]$ with $\dom\alpha_m = X$ for all $m \in M$, and} $\alpha_n \circ \alpha_m = \alpha_{nm}$ for all $n,m \in M$.
        
    \end{enumerate}
\end{definicao}



In order to define a partial action of $M$ on $X\in \C$, for any partial action datum $\alpha(m) = [\dom\alpha_m,\iota_m,\alpha_m]$ of $M$ on $X$ we shall fix a pullback $\alpha_m^{-1}(\dom\alpha_n)$ of $\alpha_m$ and $\iota_n$ as illustrated in the diagram.

\begin{equation}\label{diagrama-imagem-inversa-dado-acao-parcial}
\begin{tikzcd}
                                     & \alpha_m^{-1}(\dom\alpha_n) \arrow[ld, "{\hat{\iota}_n^m}"'] \arrow[rd, "{\widehat{\alpha}_m^n}"] &                                    \\
\dom\alpha_m \arrow[rd, "\alpha_m"'] &                                                                                      & \dom\alpha_n \arrow[ld, "\iota_n"] \\
                                     & X                                                                                    &                                   
\end{tikzcd}
\end{equation}

\red{We point out that the notation for the morphisms $\hat{\iota}_n^m$ and $\widehat{\alpha}_m^n$ associated with $\iota_n$ and $\af_m$ as in \cref{diagrama-imagem-inversa-dado-acao-parcial} will be used in \cref{proposicao-restricao-eh-forte,acao parcial-forte,acao parcial,restricao-eh-acao-parcial}.}

\begin{definicao}\label{acao parcial}
    A \textit{partial action} of $M$ on $X \in \C$ is a partial action datum $\alpha(m) = [\dom\alpha_m,\iota_m,\alpha_m]$ of $M$ on $X$ such that:
    \begin{enumerate}[label=(CPA\arabic*)]
        \item\label{AP1} $\alpha(e) = [X,id_X,id_X]$;

        \item\label{AP2} for all $m, n \in M$ there is a morphism $\varphi :\alpha_m^{-1}(\dom\alpha_n) \to  \dom\alpha_{nm}$ such that the following diagram commutes.
        \begin{equation}\label{diagrama-acao-parcial}
\begin{tikzcd}
  &  & \alpha_m^{-1}(\dom\alpha_n) \arrow[rrdd, "{\alpha_{n} \circ \widehat{\alpha}_m^n}"] \arrow[d, "\varphi", dashed] \arrow[lldd, "{\iota_m \circ \hat{\iota}_n^m}"'] &  &   \\
  &  & \dom\alpha_{nm} \arrow[rrd, "\alpha_{nm}"'] \arrow[lld, "\iota_{nm}"]                                                                                  &  &   \\
X &  &                                                                                                                                                        &  & X
\end{tikzcd}
    \end{equation}
    \end{enumerate}
\end{definicao}


\begin{rem}
    The existence of the morphism $\varphi$ in \cref{acao parcial} does not depend on the choices of a pullback of $\alpha_m$ and $\iota_n$ and of a representative for the equivalence class $\alpha(m) = [\dom\alpha_m,\iota_m,\alpha_m]$. 
\end{rem}

\begin{rem}
    A partial action is a generalization of a global action of $M$ on $X$, because in the global case the diagram \cref{diagrama-acao-parcial} becomes
        \begin{equation*}
\begin{tikzcd}
  &  & X \arrow[rrdd, "\alpha_n \circ \alpha_m"] \arrow[d, "\varphi", dashed] \arrow[lldd, "id_X"'] &  &   \\
  &  & X \arrow[rrd, "\alpha_{nm}"'] \arrow[lld, "id_X"]                                            &  &   \\
X &  &                                                                                              &  & X
\end{tikzcd}
    \end{equation*}
with $\vf=id_X$, if we take $\hat{\iota}_n^m = id_X$ and $\widehat{\alpha}_m^n = \alpha_m$.
\end{rem}


\begin{rem}
    In the setting of \cref{acao parcial} take $\C = \Set$, $\dom\alpha_m \subseteq X$ with $\iota_m$ being the corresponding inclusion for all $m \in M$ and the usual inverse image in \cref{diagrama-imagem-inversa-dado-acao-parcial}. Then the commutativity of \cref{diagrama-acao-parcial} is equivalent to \cref{AP2 set,AP3 set}, with $\varphi$ being the corresponding inclusion. Thus, \cref{acao parcial} restricts to \cref{acao_parcial_em_conjuntos}.

\end{rem}

The concept of a morphism between partial action data can also be described in this context. 

\begin{definicao}\label{definicao-datum-morphism}
    Let $\alpha(m) = [\dom\alpha_m,\iota_m,\alpha_m]$ and $\beta(m) = [\dom\beta_m,\kappa_m,\beta_m]$ be partial action data on, respectively, objects $X$ and $Y$ in $\C$. A \textit{partial action datum morphism} (in short, \textit{datum morphism}) from $\alpha$ to $\beta$ is a morphism $f : X \rightarrow Y$ in $\C$ such that for all $m \in M$ there exists a morphism $f_m : \dom\alpha_m \to \dom\beta_m$ in $\C$ making the following diagram commute.
    \begin{equation}\label{diagrama-G-datum}
\begin{tikzcd}
                                     & \dom\alpha_m \arrow[ld, "\iota_m"'] \arrow[rd, "\alpha_m"] \arrow[dd, "f_m", dashed] &                   \\
X \arrow[dd, "f"]  &                                                                                    & X \arrow[dd, "f"] \\
                                     & \dom\beta_m \arrow[ld, "\kappa_m"'] \arrow[rd, "\beta_m"]                           &                   \\
Y                &                                                                                    & Y                
\end{tikzcd}
\end{equation}
\end{definicao}


It can be shown that the existence of $f_m$ in \cref{definicao-datum-morphism} does not depend on the choices of representatives of $\alpha(m)$ and $\beta(m)$.

We can then define the categories of partial action data, partial actions and global actions.

\begin{definicao}\label{categoria-acao-parcial}
    Denote by $\datum{\C}$ the category whose objects are partial action data of $M$ on objects in $\C$ and whose morphisms are the datum morphisms between those objects, where the composition is inherited from $\C$. Moreover, let $\pact{\C}$ (resp. $\act{\C}$) denote the full subcategory of $\datum{\C}$ formed by the partial (resp. global) actions of $M$ on objects in $\C$.
\end{definicao}





In order to avoid the recurrence to the diagram \cref{diagrama-G-datum} when dealing with datum morphisms that go to global actions, we have the following lemma.

\begin{lema}\label{morfismo-de-dados-para-global}
Let $\alpha(m) = [\dom\alpha_m,\iota_m,\alpha_m]$ be a partial action datum on $X \in \C$ and $\beta(m) = [Y,id_Y, \beta_m]$ be a global action of $M$ on $Y \in \C$. Then a morphism $f$ from $X$ to $Y$ in $\C$ is a datum morphism from $\alpha$ to $\beta$ if and only if the following diagram commutes for all $m \in M$.
    \begin{equation}\label{equacao-morfismo-de-dados-para-global}
\begin{tikzcd}
                                 & \dom\alpha_m \arrow[rd, "\alpha_m"] \arrow[ld, "\iota_m"'] &                   \\
X \arrow[rd, "\beta_m \circ f"'] &                                                            & X \arrow[ld, "f"] \\
                                 & Y                                                          &                  
\end{tikzcd}
    \end{equation}
 

\end{lema}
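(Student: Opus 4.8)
The plan is to unwind \cref{definicao-datum-morphism} in the special case where the target is a global action and show that the commutativity of \cref{diagrama-G-datum} collapses to that of \cref{equacao-morfismo-de-dados-para-global}. Since $\beta(m)=[Y,\id_Y,\beta_m]$, the datum morphism condition asks for a morphism $f_m:\dom\af_m\to Y$ (note $\dom\bt_m=Y$ and $\kappa_m=\id_Y$) such that $f\circ\iota_m=\id_Y\circ f_m=f_m$ and $f\circ\af_m=\bt_m\circ f_m$. The first equation forces $f_m=f\circ\iota_m$; there is no choice. Substituting into the second gives $f\circ\af_m=\bt_m\circ f\circ\iota_m$, which is exactly the commutativity of \cref{equacao-morfismo-de-dados-para-global} (the left branch of that triangle is $\bt_m\circ f$ precomposed with $\iota_m$, and the right branch is $f$ precomposed with $\af_m$).

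So the argument is a short two-way implication. First I would prove the forward direction: if $f$ is a datum morphism, pick any $f_m$ witnessing \cref{diagrama-G-datum}; the upper-left triangle of that diagram reads $f\circ\iota_m=f_m$ (using $\kappa_m=\id_Y$), and the upper-right triangle reads $f\circ\af_m=\bt_m\circ f_m=\bt_m\circ(f\circ\iota_m)$, which is the commutativity of \cref{equacao-morfismo-de-dados-para-global}. Conversely, if \cref{equacao-morfismo-de-dados-para-global} commutes, I would simply set $f_m:=f\circ\iota_m:\dom\af_m\to Y=\dom\bt_m$ and check that the two triangles of \cref{diagrama-G-datum} commute: the left one, $\kappa_m\circ f_m=\id_Y\circ(f\circ\iota_m)=f\circ\iota_m$, holds by definition of $f_m$, and the right one, $\bt_m\circ f_m=\bt_m\circ f\circ\iota_m=f\circ\af_m$, is precisely the hypothesis. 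Hence $f$ is a datum morphism.

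One point worth a sentence in the writeup is well-definedness: \cref{definicao-datum-morphism} is phrased in terms of chosen representatives $[\dom\af_m,\iota_m,\af_m]$ and $[Y,\id_Y,\bt_m]$, and the remark following it asserts that the condition is representative-independent, so it is harmless to work with the fixed representatives here; I would just cite that remark. There is no real obstacle — the only thing to be careful about is bookkeeping with which leg of each triangle is which, and the fact that on the global side the "$\dom$" object is all of $Y$ with inclusion $\id_Y$, so the would-be morphism $f_m$ is pinned down uniquely rather than merely existing. This rigidity is exactly what makes the lemma useful: it replaces an existential quantifier over $f_m$ by a single commuting triangle.
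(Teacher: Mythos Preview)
Your proposal is correct and follows essentially the same approach as the paper's proof: both directions unwind \cref{diagrama-G-datum} using $\kappa_m=\id_Y$ to force $f_m=f\circ\iota_m$, then substitute into the other square. The only difference is your added remark about representative-independence, which the paper omits but is harmless to include.
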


\begin{proof}
If $f$ is a datum morphism from $\alpha$ to
$\beta$, then for each $m \in M$ there exists a morphism $f_m : \dom\alpha_m \to Y$ such that the diagram
    \begin{equation}\label{diagrama-morfismo-de-dados-para-global}
\begin{tikzcd}
                  & \dom\alpha_m \arrow[ld, "\iota_m"'] \arrow[rd, "\alpha_m"] \arrow[dd, "f_m", dashed] &                   \\
X \arrow[dd, "f"] &                                                                                      & X \arrow[dd, "f"] \\
                  & Y \arrow[ld, "id"'] \arrow[rd, "\beta_m"]                                            &                   \\
Y                 &                                                                                      & Y                
\end{tikzcd}
    \end{equation}
commutes. The commutativity of the left square of \cref{diagrama-morfismo-de-dados-para-global} yields $f_m = f \circ \iota_m$, which together with the commutativity of the right square yields $\beta_m \circ f \circ \iota_m = \beta_m \circ f_m = f \circ \alpha_m$, as desired.

Conversely, if \cref{equacao-morfismo-de-dados-para-global} commutes for all $m \in M$, then the morphism $f_m \coloneqq f \circ \iota_m$ makes the diagram \cref{diagrama-morfismo-de-dados-para-global} commute, and hence $f$ is a datum morphism.
\end{proof}

\section{Restrictions of global actions}\label{sec-restr}

In this categorical context, we can also construct partial actions from global actions, similarly to \cref{exemplo global}.

In what follows in this section, whenever $\beta$ is a global action of $M$ on $Y \in \C$, we will assume that $\beta(m) = [Y,id_Y,\beta_m]$ for all $m \in M$.

\begin{definicao}\label{induced-partial-action-def}

    Let $\beta$ be a global action of $M$ on $Y \in \C$ and $\iota : X \rightarrow Y$ a monomorphism in $\C$. The \textit{restriction} of $\beta$ to $X$ (via $\iota$) is the partial action datum $\alpha(m) = [\dom\alpha_m,\iota_m,\alpha_m]$ \blue{of $M$ on $X$} such that the following diagram is a pullback.
    \begin{equation}\label{diagrama-pullback-acao-parcial-induzida}
        \begin{tikzcd}
                                        & \dom\alpha_m \arrow[rd, "\alpha_m"] \arrow[ld, "\iota_m"'] &                         \\
X \arrow[rd, "\beta_m \circ \iota"'] &                                        & X \arrow[ld, "\iota"] \\
                                        & Y                                      &                        
\end{tikzcd}
    \end{equation}
\end{definicao}


\begin{rem}\label{remark-induced-partial-action}
    The restriction $\alpha$ of $\beta$ to $X$ in \cref{induced-partial-action-def} exists as a partial action datum (that is, $\iota_m$ is a monomorphism for all $m \in M$) because pullbacks preserve monomorphisms, and $\alpha$ does not depend on the choice of a pullback of $\beta_m \circ \iota$ and $\iota$, since any two such pullbacks are always isomorphic spans.
\end{rem}

\begin{rem}\label{remark-induced-partial-action-morfismo-eh-datum-morphism}
    Notice that by \cref{morfismo-de-dados-para-global} the morphism $\iota : X \to Y$ in \cref{induced-partial-action-def} is a datum morphism from $\alpha$ to $\beta$.
\end{rem}


\begin{proposicao}

    Let $Y$ be a set, $\beta$ a global action of $M$ on $Y$ and $X \subseteq Y$ with $\iota$ being the inclusion of $X$ into $Y$. Then the restriction of $\beta$ to $X$ in the sense of \cref{restriction-definicao-set} corresponds (as in \cref{correspondencia-dadosdeacao-set}) to the restriction of $\beta$ to $Y$ via $\iota$ in the sense of \cref{induced-partial-action-def}.
\end{proposicao}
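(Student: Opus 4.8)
The plan is to reduce the claim, for each fixed $m\in M$, to an explicit computation of a pullback in $\Set$, and then match it against the correspondence of \cref{correspondencia-dadosdeacao-set}.

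First I would make the two constructions explicit. Write $\gamma$ for the restriction of $\beta$ to $X$ in the sense of \cref{restriction-definicao-set}, so that $\dom\gamma_m=X\cap\beta_m^{-1}(X)$ and $\gamma_m(x)=\beta_m(x)$ for $x\in\dom\gamma_m$. By \cref{correspondencia-dadosdeacao-set}, this partial action datum corresponds to the map $M\to\pa_{\C}(X,X)$ sending $m$ to $[\dom\gamma_m,j_m,\gamma_m]$, where $j_m:\dom\gamma_m\to X$ is the inclusion. On the other hand, the restriction of $\beta$ to $X$ via $\iota$ in the sense of \cref{induced-partial-action-def} is, by definition, the map $M\to\pa_{\C}(X,X)$ sending $m$ to $[\dom\alpha_m,\iota_m,\alpha_m]$, where $(\dom\alpha_m,\iota_m,\alpha_m)$ is a pullback of $\beta_m\circ\iota$ and $\iota$ in $\Set$. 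Hence it remains to prove that $[\dom\alpha_m,\iota_m,\alpha_m]=[\dom\gamma_m,j_m,\gamma_m]$ for all $m\in M$.

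By \cref{remark-induced-partial-action}, the class $[\dom\alpha_m,\iota_m,\alpha_m]$ does not depend on the chosen pullback, so I would simply verify that the span $(\dom\gamma_m,j_m,\gamma_m)$ is itself a pullback of $\beta_m\circ\iota$ and $\iota$. Commutativity of the outer square is immediate: for $x\in\dom\gamma_m$ we have $\iota(\gamma_m(x))=\beta_m(x)=(\beta_m\circ\iota)(j_m(x))$. For the universal property, given a set $T$ and morphisms $u,v:T\to X$ with $(\beta_m\circ\iota)\circ u=\iota\circ v$, i.e. $\beta_m(u(t))=v(t)$ for all $t\in T$, one has $u(t)\in X$ and $\beta_m(u(t))=v(t)\in X$, so $u(t)\in X\cap\beta_m^{-1}(X)=\dom\gamma_m$; then $w:T\to\dom\gamma_m$, $w(t)=u(t)$, is a well-defined morphism with $j_m\circ w=u$ and $\gamma_m\circ w=v$, and it is the unique such morphism because $j_m$ is a monomorphism in $\Set$. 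Thus $(\dom\gamma_m,j_m,\gamma_m)$ is a pullback (in particular $j_m$ is a monomorphism, so it is a legitimate partial morphism), whence $[\dom\alpha_m,\iota_m,\alpha_m]=[\dom\gamma_m,j_m,\gamma_m]$, which is exactly the value at $m$ of the map associated with $\gamma$ by \cref{correspondencia-dadosdeacao-set}.

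I do not expect a genuine obstacle here: the argument is a direct unwinding of the definitions together with the standard description of pullbacks in $\Set$. The only points requiring minor care are to keep track that $[\dom\alpha_m,\iota_m,\alpha_m]$ is being compared as an isomorphism class of spans in $\pa_{\C}(X,X)$, so that exhibiting a single convenient pullback representative is enough thanks to \cref{remark-induced-partial-action}, and to note that $(\dom\gamma_m,j_m,\gamma_m)$, with $\dom\gamma_m\sst X$ and $j_m$ the inclusion, is precisely the distinguished representative of its class through which \cref{correspondencia-dadosdeacao-set} passes (\cref{partial-morphisms-in-set}).
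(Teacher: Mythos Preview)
Your proposal is correct and follows essentially the same approach as the paper: both arguments show directly that the span $(\dom\gamma_m,j_m,\gamma_m)$ coming from \cref{restriction-definicao-set} is a pullback of $\beta_m\circ\iota$ and $\iota$ in $\Set$, via the same elementwise verification of commutativity and the universal property, with uniqueness from the inclusion being a monomorphism. Your explicit appeal to \cref{remark-induced-partial-action} to justify comparing isomorphism classes is a nice clarification, but otherwise the two proofs coincide.
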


\begin{proof}
    The restriction of $\beta$ to $X$ in the sense of \cref{restriction-definicao-set} is the partial action $\{\alpha_m\}_{m \in M}$ where, for each $m \in M$
    $$\dom\alpha_m = X \cap \beta_m^{-1}(X)$$
    and $\alpha_m : \dom\alpha_m \to X$ is given by 
    \begin{equation}\label{alphag(a)=betag(a)}
        \alpha_m(x) = \beta_m(x)
    \end{equation}
    for each $x \in \dom\alpha_m$.

    By \cref{correspondencia-dadosdeacao-set} the family $\{\alpha_m\}_{m \in M}$ corresponds to $\alpha(m) = [\dom\alpha_m,\iota_m,\alpha_m]$, where $\iota_m$ is the inclusion of $\dom\alpha_m$ into $X$. 

    Notice that the diagram \cref{diagrama-pullback-acao-parcial-induzida} commutes for all $m \in M$, by \cref{alphag(a)=betag(a)}. \red{It is then a simple exercise to verify that \cref{diagrama-pullback-acao-parcial-induzida} is a pullback for all $m \in M$, so $\alpha$ is the restriction of $\beta$ to $X$ via $\iota$, as desired.}

\end{proof}


We shall prove below that any restriction of a global action is a partial action. To this end, for the remainder of this section, assume that we are in the setting of \cref{induced-partial-action-def}.

\begin{proposicao}\label{restricao-eh-acao-parcial}
    The restriction $\alpha$ of $\beta$ to $X$ in \cref{induced-partial-action-def} is a partial action of $M$ on $X$.
\end{proposicao}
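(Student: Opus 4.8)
The plan is to verify the two axioms \ref{AP1} and \ref{AP2} of \cref{acao parcial} for the restriction $\af$ defined via the pullback square \cref{diagrama-pullback-acao-parcial-induzida}.

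First I would check \ref{AP1}. Taking $m=e$ in \cref{diagrama-pullback-acao-parcial-induzida}, the square to analyse has $\bt_e\circ\iota=\iota$ (since $\bt$ is a global action, $\bt_e=id_Y$ by \ref{AG1}) on the bottom-left and $\iota$ on the bottom-right. So the cospan is $X\xrightarrow{\iota}Y\xleftarrow{\iota}X$; since $\iota$ is a monomorphism, the square with $\dom\af_e=X$, $\iota_e=id_X$, $\af_e=id_X$ is a pullback of this cospan. Hence $\af(e)=[X,id_X,id_X]$, which is \ref{AP1}. (I should note explicitly that this particular pullback is one admissible choice and, by \cref{remark-induced-partial-action}, the class $\af(e)$ does not depend on the choice.)

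Next, the main work: \ref{AP2}. Fix $m,n\in M$. I need to produce a morphism $\vf\colon\af_m^{-1}(\dom\af_n)\to\dom\af_{nm}$ making \cref{diagrama-acao-parcial} commute. The strategy is to use the universal property of the pullback defining $\dom\af_{nm}$, i.e. the square \cref{diagrama-pullback-acao-parcial-induzida} with $nm$ in place of $m$: its cospan is $X\xrightarrow{\bt_{nm}\circ\iota}Y\xleftarrow{\iota}X$. So I must exhibit two morphisms $\af_m^{-1}(\dom\af_n)\rightrightarrows X$, namely $\iota_m\circ\hat\iota_n^m$ (the left leg, going to the ``$\bt_{nm}\circ\iota$'' copy of $X$) and $\af_n\circ\wh\af_m^n$ (the right leg, going to the ``$\iota$'' copy of $X$), and check that they equalise the cospan: $\bt_{nm}\circ\iota\circ(\iota_m\circ\hat\iota_n^m)=\iota\circ(\af_n\circ\wh\af_m^n)$. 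Unwinding, the right side is $\iota\circ\af_n\circ\wh\af_m^n$; using the defining pullback \cref{diagrama-pullback-acao-parcial-induzida} for $\dom\af_n$ (which gives $\iota\circ\af_n=\bt_n\circ\iota\circ\iota_n$) and then the pullback square \cref{diagrama-imagem-inversa-dado-acao-parcial} for $\dom\af_m^{-1}(\dom\af_n)$ (which gives $\iota_n\circ\wh\af_m^n=\af_m\circ\hat\iota_n^m$ and $\iota\circ\af_m=\bt_m\circ\iota\circ\iota_m$ again by \cref{diagrama-pullback-acao-parcial-induzida} for $\dom\af_m$), this becomes $\bt_n\circ\bt_m\circ\iota\circ\iota_m\circ\hat\iota_n^m$, which equals $\bt_{nm}\circ\iota\circ\iota_m\circ\hat\iota_n^m$ by \ref{AG2}. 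That is exactly the left side. So the universal property yields a unique $\vf$ with $\iota_{nm}\circ\vf=\iota_m\circ\hat\iota_n^m$ and $\af_{nm}\circ\vf=\af_n\circ\wh\af_m^n$, which is precisely the commutativity of \cref{diagrama-acao-parcial}.

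I expect the main obstacle to be purely bookkeeping: keeping track of which copy of $X$ each leg of each pullback square maps into, and chaining together the several pullback identities (from \cref{diagrama-pullback-acao-parcial-induzida} applied at $m$, $n$, $nm$, and from \cref{diagrama-imagem-inversa-dado-acao-parcial}) in the right order, together with one application of the global-action law \ref{AG2}. There is no conceptual difficulty; the verification that the two morphisms equalise the cospan is a diagram chase, and once that is done the pullback's universal property hands over $\vf$ directly. A final remark I would add is that, by the remark following \cref{acao parcial}, the existence of $\vf$ is independent of the chosen pullbacks and representatives, so the argument is legitimate regardless of the fixed choices made in \cref{diagrama-imagem-inversa-dado-acao-parcial}.
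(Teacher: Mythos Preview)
Your proposal is correct and follows essentially the same approach as the paper: verify \ref{AP1} by noting that $\iota$ being a monomorphism makes $(X,id_X,id_X)$ a pullback of the cospan $X\xrightarrow{\iota}Y\xleftarrow{\iota}X$, and verify \ref{AP2} by chaining the commutativities of \cref{diagrama-pullback-acao-parcial-induzida} (for $m$ and $n$) and \cref{diagrama-imagem-inversa-dado-acao-parcial} together with \ref{AG2} to show that $\iota_m\circ\hat\iota_n^m$ and $\af_n\circ\wh\af_m^n$ equalise the cospan defining $\dom\af_{nm}$, then invoke its universal property. The only cosmetic difference is that the paper runs the equality chain starting from $\bt_{nm}\circ\iota\circ\iota_m\circ\hat\iota_n^m$ and ending at $\iota\circ\af_n\circ\wh\af_m^n$, whereas you run it in the opposite direction.
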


\begin{proof}
    We first check \cref{AP1}. Note that $\beta_e = \text{id}_Y$, so $\beta_e \circ \iota = \iota$. Since $\iota$ is a monomorphism, the following diagram is a pullback.
    \begin{center}
    \begin{tikzcd}
                                  & X \arrow[rd, "id_X"] \arrow[ld, "id_X"'] &                   \\
X \arrow[rd, "\beta_e \circ \iota"'] &                                          & X \arrow[ld, "\iota"] \\
                                  & Y                                        &                  
\end{tikzcd}
    \end{center}
Therefore, by \cref{remark-induced-partial-action},
    $$\alpha(e) = [X,id_X,id_X].$$

To verify \cref{AP2}, fix $m,n \in M$. Our goal is to construct $\varphi: \alpha_m^{-1}(\dom\alpha_n) \to \dom\alpha_{nm}$ which makes diagram \cref{diagrama-acao-parcial} commute.

Notice that, since $\beta$ is a global action, and by the commutativity of the diagrams \cref{diagrama-imagem-inversa-dado-acao-parcial,diagrama-pullback-acao-parcial-induzida},
\begin{align*}
    (\beta_{nm} \circ \iota) \circ (\iota_m \circ \hat{\iota}_n^m) &= (\beta_n \circ \beta_m) \circ \iota \circ \iota_m \circ \hat{\iota}_n^m = \beta_n \circ (\beta_m \circ \iota \circ \iota_m) \circ \hat{\iota}_n^m \\&
    = \beta_n \circ (\iota \circ \alpha_m) \circ \hat{\iota}_n^m = \beta_n \circ \iota \circ (\alpha_m \circ \hat{\iota}_n^m) = \beta_n \circ \iota \circ (\iota_n \circ \widehat{\alpha}_n^m) \\
    &= (\beta_n \circ \iota \circ \iota_n) \circ \widehat{\alpha}_n^m = (\iota \circ \alpha_n) \circ \widehat{\alpha}_n^m = \iota \circ (\alpha_n \circ \widehat{\alpha}_n^m).
\end{align*}

Thus, the diagram
\begin{equation*}
    \begin{tikzcd}
                                        & \alpha_m^{-1}(\dom\alpha_n) \arrow[ld, "\iota_m \circ \hat{\iota}_n^m"'] \arrow[rd, "\alpha_n \circ \widehat{\alpha}_n^m"] &                       \\
X \arrow[rd, "\beta_{nm} \circ \iota"'] &                                                                                                                            & X \arrow[ld, "\iota"] \\
                                        & Y                                                                                                                          &                      
\end{tikzcd}
\end{equation*}
commutes, and so, by the universal property of the pullback \cref{diagrama-pullback-acao-parcial-induzida}, there exists a unique morphism $\varphi$ such that the diagram
\begin{equation*}
    \begin{tikzcd}
                                        & \alpha_m^{-1}(\dom\alpha_n) \arrow[ldd, "\iota_m \circ \hat{\iota}_n^m"'] \arrow[rdd, "\alpha_n \circ \widehat{\alpha}_n^m"] \arrow[d, "\varphi", dashed] &                       \\
                                        & \dom\alpha_{nm} \arrow[ld, "\iota_{nm}"] \arrow[rd, "\alpha_{nm}"']                                                                                       &                       \\
X \arrow[rd, "\beta_{nm} \circ \iota"'] &                                                                                                                                                           & X \arrow[ld, "\iota"] \\
                                        & Y                                                                                                                                                         &                      
\end{tikzcd}
\end{equation*}
commutes, and so $\varphi$ makes \cref{diagrama-acao-parcial} commute, as desired.

\end{proof}


\red{In \cref{proposicao-restricao-eh-forte,acao parcial-forte},} for any partial action datum $\alpha(m) = [\dom\alpha_m,\iota_m,\alpha_m]$ of $M$ on $X$ we shall fix a pullback $\dom\alpha_m \cap \dom\alpha_n$ of $\iota_m$ and $\iota_n$ as illustrated in the diagram.

\begin{equation}\label{diagrama-interseccao-dado-acao-parcial}
\begin{tikzcd}
                                    & \dom\alpha_m \cap \dom\alpha_n \arrow[ld, "\overline{\iota}_n^m"'] \arrow[rd, "\overline{\iota}_m^n"] &                                    \\
\dom\alpha_m \arrow[rd, "\iota_m"'] &                                                                                                       & \dom\alpha_n \arrow[ld, "\iota_n"] \\
                                    & X                                                                                                     &                                   
\end{tikzcd}
\end{equation}

The restriction of a global action moreover satisfies the following property.

\begin{proposicao}\label{proposicao-restricao-eh-forte}
    Let $\alpha$ be the restriction of $\beta$ to $X$ in \cref{induced-partial-action-def}. Then for each $m,n \in M$ there exists an isomorphism $\theta : \alpha_m^{-1}(\dom\alpha_n) \to \dom\alpha_m \cap \dom\alpha_{nm}$ such that the diagram
    \begin{equation}\label{diagrama-proposicao-restricao-eh-forte}
\begin{tikzcd}
  &  & \alpha_m^{-1}(\dom\alpha_n) \arrow[d, "\theta", dashed] \arrow[rrdd, "\alpha_{n} \circ \widehat{\alpha}_m^n"] \arrow[lldd, "\iota_m \circ \hat{\iota}_n^m"'] &  &   \\
  &  & \dom\alpha_m \cap \dom\alpha_{nm} \arrow[rrd, "\alpha_{nm} \circ \overline{\iota}_m^{nm}"'] \arrow[lld, "\iota_{nm} \circ \overline{\iota}_m^{nm}"]       &  &   \\
X &  &                                                                                                                                                           &  & X
\end{tikzcd}
    \end{equation}
    commutes.
\end{proposicao}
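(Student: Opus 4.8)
The strategy is to realize both $\alpha_m^{-1}(\dom\alpha_n)$ and $\dom\alpha_m\cap\dom\alpha_{nm}$ as pullbacks of one and the same cospan with apex $Y$, and then to take for $\theta$ the canonical comparison isomorphism between two pullbacks of the same cospan; the commutativity of \cref{diagrama-proposicao-restricao-eh-forte} will then follow at once from the compatibility of $\theta$ with the canonical projections.

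First I would stack the pullback square \cref{diagrama-imagem-inversa-dado-acao-parcial} defining $\alpha_m^{-1}(\dom\alpha_n)$ (the pullback of $\alpha_m$ and $\iota_n$) on top of the pullback square \cref{diagrama-pullback-acao-parcial-induzida} defining $\dom\alpha_n$ (the pullback of $\beta_n\circ\iota$ and $\iota$), arranged so that the two squares share the edge $\iota_n\colon\dom\alpha_n\to X$. By the pasting lemma for pullbacks the outer rectangle is again a pullback, so $\alpha_m^{-1}(\dom\alpha_n)$, together with the projections $\hat\iota_n^m\colon\alpha_m^{-1}(\dom\alpha_n)\to\dom\alpha_m$ and $\alpha_n\circ\widehat\alpha_m^n\colon\alpha_m^{-1}(\dom\alpha_n)\to X$, is a pullback of $\beta_n\circ\iota\circ\alpha_m\colon\dom\alpha_m\to Y$ and $\iota\colon X\to Y$. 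In the same way, stacking \cref{diagrama-interseccao-dado-acao-parcial} with $n$ replaced by $nm$ (which presents $\dom\alpha_m\cap\dom\alpha_{nm}$ as the pullback of $\iota_m$ and $\iota_{nm}$) on top of \cref{diagrama-pullback-acao-parcial-induzida} for $nm$, the two squares sharing the edge $\iota_{nm}\colon\dom\alpha_{nm}\to X$, shows that $\dom\alpha_m\cap\dom\alpha_{nm}$, together with the projections $\overline\iota_{nm}^m\colon\dom\alpha_m\cap\dom\alpha_{nm}\to\dom\alpha_m$ and $\alpha_{nm}\circ\overline\iota_m^{nm}\colon\dom\alpha_m\cap\dom\alpha_{nm}\to X$, is a pullback of $\beta_{nm}\circ\iota\circ\iota_m\colon\dom\alpha_m\to Y$ and $\iota\colon X\to Y$.

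Next I would check that these two cospans coincide: the commutativity of \cref{diagrama-pullback-acao-parcial-induzida} gives $\iota\circ\alpha_m=\beta_m\circ\iota\circ\iota_m$, whence
\[
\beta_n\circ\iota\circ\alpha_m=\beta_n\circ\beta_m\circ\iota\circ\iota_m=\beta_{nm}\circ\iota\circ\iota_m,
\]
the last step being \cref{AG2} for the global action $\beta$. Hence $\alpha_m^{-1}(\dom\alpha_n)$ and $\dom\alpha_m\cap\dom\alpha_{nm}$ are pullbacks of the same cospan $\dom\alpha_m\to Y\leftarrow X$, so there is a unique isomorphism $\theta\colon\alpha_m^{-1}(\dom\alpha_n)\to\dom\alpha_m\cap\dom\alpha_{nm}$ with $\overline\iota_{nm}^m\circ\theta=\hat\iota_n^m$ and $\alpha_{nm}\circ\overline\iota_m^{nm}\circ\theta=\alpha_n\circ\widehat\alpha_m^n$. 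The second equality is exactly the commutativity of the right half of \cref{diagrama-proposicao-restricao-eh-forte}; composing the first with $\iota_m$ and using $\iota_m\circ\overline\iota_{nm}^m=\iota_{nm}\circ\overline\iota_m^{nm}$ (commutativity of \cref{diagrama-interseccao-dado-acao-parcial} for $m$ and $nm$) gives $\iota_{nm}\circ\overline\iota_m^{nm}\circ\theta=\iota_m\circ\hat\iota_n^m$, which is the commutativity of the left half, completing the argument.

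All the substance sits in the two applications of the pasting lemma and in the identity $\beta_n\circ\iota\circ\alpha_m=\beta_{nm}\circ\iota\circ\iota_m$; I expect the only genuine source of trouble to be bookkeeping — one must transpose the squares so that they share the correct edge before pasting, and keep straight which leg of each pullback lands in $\dom\alpha_m$ and which in $X$. Once the orientations are pinned down, nothing further is required.
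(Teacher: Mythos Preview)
Your argument is correct, and it follows a genuinely different route from the paper's own proof.

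The paper proceeds by hand: it first produces $\theta$ from the universal property of the pullback $\dom\alpha_m\cap\dom\alpha_{nm}$ (using the morphism $\varphi$ of \cref{AP2} obtained in \cref{restricao-eh-acao-parcial}), then explicitly builds a candidate inverse $\psi$ in two steps --- first a map $\eta:\dom\alpha_m\cap\dom\alpha_{nm}\to\dom\alpha_n$ from the pullback \cref{diagrama-pullback-acao-parcial-induzida} for $n$, then $\psi$ from the pullback \cref{diagrama-imagem-inversa-dado-acao-parcial} --- and finally checks $\theta\circ\psi=\mathrm{id}$ and $\psi\circ\theta=\mathrm{id}$ by cancelling the monomorphisms $\overline\iota_{nm}^m$ and $\hat\iota_n^m$.

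Your approach replaces all of this with two applications of the pullback pasting lemma and the single identity $\beta_n\circ\iota\circ\alpha_m=\beta_{nm}\circ\iota\circ\iota_m$, so that $\alpha_m^{-1}(\dom\alpha_n)$ and $\dom\alpha_m\cap\dom\alpha_{nm}$ are visibly pullbacks of the \emph{same} cospan $\dom\alpha_m\to Y\leftarrow X$; the isomorphism and the commutativity of \cref{diagrama-proposicao-restricao-eh-forte} then come for free. This is shorter and more conceptual, and it makes transparent why the restriction is strong. The paper's version, by contrast, is entirely self-contained (no appeal to the pasting lemma) and exhibits the inverse explicitly, which can be convenient if one later needs a concrete formula for $\theta^{-1}$.
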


\begin{proof}
By \cref{restricao-eh-acao-parcial}, $\alpha$ is a partial action. Let $\varphi : \alpha_m^{-1}(\dom\alpha_n) \to \dom\alpha_{nm}$ be the morphism from \cref{AP2} which makes the diagram \cref{diagrama-acao-parcial} commute.

Then notice that, by the commutativity of \cref{diagrama-acao-parcial},
$$\iota_{nm} \circ \varphi = \iota_m \circ \hat{\iota}_n^m,$$
so since \cref{diagrama-interseccao-dado-acao-parcial} is a pullback there is a unique morphism $\theta : \alpha_m^{-1}(\dom\alpha_n) \to \dom\alpha_m \cap \dom\alpha_{nm}$ that makes the following diagram commute.
\begin{equation}\label{diagrama-proposicao-restricao-eh-forte-nu}
\begin{tikzcd}
                                    & \alpha_m^{-1}(\dom\alpha_n) \arrow[ldd, "\hat{\iota}_n^m"'] \arrow[rdd, "\varphi"] \arrow[d, "\theta", dashed]    &                                       \\
                                    & \dom\alpha_m \cap \dom\alpha_{nm} \arrow[ld, "\overline{\iota}_{nm}^m"] \arrow[rd, "\overline{\iota}_m^{nm}"'] &                                       \\
\dom\alpha_m \arrow[rd, "\iota_m"'] &                                                                                                                & \dom\alpha_{nm} \arrow[ld, "\iota_{nm}"] \\
                                    & X                                                                                                              &                                      
\end{tikzcd}
\end{equation}

The morphism $\theta$ makes \cref{diagrama-proposicao-restricao-eh-forte} commute, because
$$(\iota_{nm} \circ \overline{\iota}_m^{nm}) \circ \theta = (\iota_m \circ \overline{\iota}_{nm}^m) \circ \theta = \iota_m \circ (\overline{\iota}_{nm}^m \circ \theta) = \iota_m \circ \hat{\iota}_n^m$$
by the commutativity of \cref{diagrama-proposicao-restricao-eh-forte-nu,diagrama-interseccao-dado-acao-parcial} and 
$$(\alpha_{nm} \circ \overline{\iota}_m^{nm}) \circ \theta = \alpha_{nm} \circ (\overline{\iota}_m^{nm} \circ \theta) = \alpha_{nm} \circ \varphi = \alpha_n \circ \widehat{\alpha}_m^n$$
by the commutativity of \cref{diagrama-proposicao-restricao-eh-forte-nu,diagrama-acao-parcial}.

Let us verify that $\theta$ is an isomorphism by exhibiting its inverse. Notice that by the commutativity of \cref{diagrama-pullback-acao-parcial-induzida,diagrama-interseccao-dado-acao-parcial} we have
    \begin{align*}
        \iota \circ (\alpha_{nm} \circ \overline{\iota}_m^{nm}) &= (\iota \circ \alpha_{nm}) \circ \overline{\iota}_m^{nm} = (\beta_{nm} \circ \iota \circ \iota_{nm}) \circ \overline{\iota}_m^{nm} \\
        &= (\beta_n \circ \beta_m) \circ \iota \circ (\iota_{nm} \circ \overline{\iota}_m^{nm}) = \beta_n \circ \beta_m \circ \iota \circ (\iota_m \circ \overline{\iota}_{nm}^m) \\
        &= \beta_n \circ (\beta_m \circ \iota \circ \iota_m) \circ \overline{\iota}_{nm}^m = \beta_n \circ (\iota \circ \alpha_m) \circ \overline{\iota}_{nm}^m \\
        &= \beta_n \circ \iota \circ (\alpha_m \circ \overline{\iota}_{nm}^m).
    \end{align*}
so since \cref{diagrama-pullback-acao-parcial-induzida} is a pullback, there exists a unique morphism $\eta : \dom\alpha_m \cap \dom\alpha_{nm} \to \alpha_m^{-1}(\dom\alpha_n)$ such that the following diagram commutes.
\begin{equation}\label{diagrama-proposicao-restricao-eh-forte-eta}
\begin{tikzcd}
                                     & \dom\alpha_m \cap \dom\alpha_{nm} \arrow[ldd, "\alpha_m \circ \overline{\iota}_{nm}^m"'] \arrow[rdd, "\alpha_{nm} \circ \overline{\iota}_m^{nm}"] \arrow[d, "\eta", dashed] &                       \\
                                     & \dom\alpha_n \arrow[ld, "\iota_n"] \arrow[rd, "\alpha_n"']                                                                                                                  &                       \\
X \arrow[rd, "\beta_n \circ \iota"'] &                                                                                                                                                                             & X \arrow[ld, "\iota"] \\
                                     & Y                                                                                                                                                                           &                      
\end{tikzcd}
\end{equation}

In particular, by the commutativity of \cref{diagrama-proposicao-restricao-eh-forte-eta} it follows that
\begin{align*}
    \iota_n \circ \eta = \alpha_m \circ \overline{\iota}_{nm}^m,
\end{align*}
so since \cref{diagrama-imagem-inversa-dado-acao-parcial} is a pullback, there exists a unique morphism $\psi : \dom\alpha_m \cap \dom\alpha_{nm} \to \alpha_m^{-1}(\dom\alpha_n)$ such that the following diagram commutes.
\begin{equation}\label{diagrama-proposicao-restricao-eh-forte-psi}
    \begin{tikzcd}
                                     & \dom\alpha_m \cap \dom\alpha_{nm} \arrow[ldd, "\overline{\iota}_{nm}^m"'] \arrow[rdd, "\eta"] \arrow[d, "\psi", dashed] &                                    \\
                                     & \alpha_m^{-1}(\dom\alpha_n) \arrow[ld, "\hat{\iota}_n^m"] \arrow[rd, "\widehat{\alpha}_m^n"']                              &                                    \\
\dom\alpha_m \arrow[rd, "\alpha_m"'] &                                                                                                                            & \dom\alpha_n \arrow[ld, "\iota_n"] \\
                                     & X                                                                                                                          &                                   
\end{tikzcd}
\end{equation}

We shall verify that $\psi$ is the inverse of $\theta$. Notice that by the commutativity of \cref{diagrama-proposicao-restricao-eh-forte-psi,diagrama-proposicao-restricao-eh-forte-nu} we have
$$\overline{\iota}_{nm}^m \circ \theta \circ \psi = \hat{\iota}_n^m \circ \psi = \overline{\iota}_{nm}^m = \overline{\iota}_{nm}^m \circ id_{\dom\alpha_m \cap \dom\alpha_{nm}},$$
so, since $\overline{\iota}_{nm}^m$ is a monomorphism (because \cref{diagrama-interseccao-dado-acao-parcial} is a pullback for all $m, n \in M$ and $\iota_{nm}$ is a monomorphism), we have $\theta \circ \psi = id_{\dom\alpha_m \cap \dom\alpha_{nm}}$.

Similarly, the commutativity of \cref{diagrama-proposicao-restricao-eh-forte-psi,diagrama-proposicao-restricao-eh-forte-nu} gives us
$$\hat{\iota}_n^m \circ \psi \circ \theta = \overline{\iota}_{nm}^m \circ \theta = \hat{\iota}_n^m = \hat{\iota}_n^m \circ id_{\alpha_m^{-1}(\dom\alpha_n)},$$
so, since $\hat{\iota}_n^m$ is a monomorphism (because $\iota_n$ is a monomorphism in \cref{diagrama-imagem-inversa-dado-acao-parcial}), we have $\psi \circ \theta = id_{\alpha_m^{-1}(\dom\alpha_n)}$. Thus, $\psi$ is an isomorphism, as desired.
\end{proof}

\cref{proposicao-restricao-eh-forte} motivates the following definition.

\begin{definicao}\label{acao parcial-forte}
    A \textit{strong partial action} of $M$ on $X \in \C$ is a partial action datum $\alpha(m) = [\dom\alpha_m,\iota_m,\alpha_m]$ of $M$ on $X$ such that:
    \begin{enumerate}[label=(SCPA\arabic*)]
        \item\label{AP1strong} $\alpha(e) = [X,id_X,id_X]$;

        \item\label{AP2strong} for all $m, n \in G$ there is an isomorphism $\theta : \alpha_m^{-1}(\dom\alpha_n) \to \dom\alpha_m \cap \dom\alpha_{nm}$ such that the following diagram commutes.
        \begin{equation}\label{diagrama-acao-parcial-forte}
\begin{tikzcd}
  &  & \alpha_m^{-1}(\dom\alpha_n) \arrow[d, "\theta", dashed] \arrow[rrdd, "\alpha_{n} \circ \widehat{\alpha}_m^n"] \arrow[lldd, "\iota_m \circ \hat{\iota}_n^m"'] &  &   \\
  &  & \dom\alpha_m \cap \dom\alpha_{nm} \arrow[rrd, "\alpha_{nm} \circ \overline{\iota}_m^{nm}"'] \arrow[lld, "\iota_{nm} \circ \overline{\iota}_m^{nm}"]       &  &   \\
X &  &                                                                                                                                                           &  & X
\end{tikzcd}
    \end{equation}
    \end{enumerate}
\end{definicao}

\begin{rem}\label{remark-restricao-eh-acao-parcial-forte}
    \cref{proposicao-restricao-eh-forte,restricao-eh-acao-parcial} show that the restriction of a global action is a strong partial action.
\end{rem}


\begin{rem}
    Every strong partial action is a partial action. Indeed, if $\theta$ makes \cref{diagrama-acao-parcial-forte} commute, then $\varphi = \overline{\iota}_m^{nm} \circ \theta$ makes \cref{diagrama-acao-parcial} commute.
\end{rem}

\begin{definicao}
    Denote by $\spact{\C}$ the full subcategory of $\datum{\C}$ whose objects are the strong partial actions of $M$ on objects in $\C$.
\end{definicao}

\begin{rem}\label{remark-strong-partial-action-set}
    In the setting of \cref{acao parcial-forte} take $\C = \Set$, $\dom\alpha_m \subseteq X$ with $\iota_m$ being the corresponding inclusion for all $m \in M$ and the usual intersection and inverse image in \cref{diagrama-interseccao-dado-acao-parcial,diagrama-imagem-inversa-dado-acao-parcial}. Then the commutativity of \cref{diagrama-acao-parcial-forte} is equivalent to $\theta = id$, \cref{AP3 set,AP2 set strong}. Thus, \cref{acao parcial-forte} restricts to \cref{acao_parcial_forte_em_conjuntos}.
    

\end{rem}


\begin{exemplo}[Example 1.3 from \cite{geopactions}]\label{exm-non-strong-pa}
    Let $X = \Z$ and $M = (\Z,+)$.
\begin{align*}
    \dom\alpha_z =\begin{cases}
         \Z, & \text{ if } z \geq 0,\\
         \{x \in \Z : x \geq -z\}, & \text{ if } z < 0,
    \end{cases}
    \quad \text{ and } \quad \alpha_z(x) = x+z
\end{align*}
  for all $m \in M$. Then the partial action datum $\alpha(m) = [\dom\alpha_m,\iota_m,\alpha_m]$ of $M$ on $X$, where $\iota_z$ is the inclusion of $\dom\alpha_z$ into $\Z$, is a partial action of $M$ on $X$ that is not strong.
\end{exemplo}

The following example is an adaptation of \cite[Example 3.1]{saracco2022globalization2}.
\begin{exemplo}\label{exemplo-esp-top-acao-parcial-forte}
    Let $\C = \Top$, $M$ any nontrivial monoid, $X$ a set and $\tau$ and $\tau'$ topologies on $X$ such that $\tau'$ is a strictly finer than $\tau$. We have a strong partial action $\alpha$ of $M$ on $(X,\tau) \in \Top$ given as follows: $\alpha(e) = [(X,\tau),id_X,id_X]$ and $\alpha(m) = [(X,\tau'),id_X,id_X]$ if $m \neq e$.

    However, $\alpha$ is not a restriction of any global action. Indeed, assume by contradiction that $\alpha$ is the restriction of a global action $\beta$ on $(Y,\upsilon) \in \Top$ via some monomorphism $\iota : (X,\tau) \to (Y,\upsilon)$. Then, given any nontrivial $m \in M$, the diagram
    \begin{equation}\label{diagrama-pullback-exemplo-esp-top-acao-parcial-forte}
            \begin{tikzcd}
                                              & {(X,\tau')} \arrow[ld, "id_X"'] \arrow[rd, "id_X"] &                                \\
{(X,\tau)} \arrow[rd, "\beta_m \circ \iota"'] &                                                    & {(X,\tau)} \arrow[ld, "\iota"] \\
                                              & {(Y,\upsilon)}                                                  &                               
\end{tikzcd}
    \end{equation}
    is a pullback, and in particular commutative, so $\beta_m \circ \iota = \iota$. Thus, the diagram
    \begin{center}
        \begin{tikzcd}
                                              & {(X,\tau)} \arrow[ld, "id_X"'] \arrow[rd, "id_X"] &                                \\
{(X,\tau)} \arrow[rd, "\beta_m \circ \iota"'] &                                                    & {(X,\tau)} \arrow[ld, "\iota"] \\
                                              & {(Y,\upsilon)}                                                  &                               
\end{tikzcd}
    \end{center}
    is a commutative diagram in $\Top$. Since \cref{diagrama-pullback-exemplo-esp-top-acao-parcial-forte} is a pullback, there is a unique morphism $\varphi : (X,\tau) \to (X,\tau')$ such that the diagram
    \begin{center}
        \begin{tikzcd}
                                              & {(X,\tau)} \arrow[ldd, "id_X"'] \arrow[rdd, "id_X"] \arrow[d, "\varphi", dashed] &                                \\
                                              & {(X,\tau')} \arrow[ld, "id_X"] \arrow[rd, "id_X"']                               &                                \\
{(X,\tau)} \arrow[rd, "\beta_m \circ \iota"'] &                                                                                  & {(X,\tau)} \arrow[ld, "\iota"] \\
                                              & {(Y,\upsilon)}                                                                                &                               
\end{tikzcd}
    \end{center}
    commutes. Clearly, $\varphi$ must be the identity map on $X$, so it follows that $\tau$ is a finer topology than $\tau'$, a contradiction. 
\end{exemplo}

\section{Globalizations of partial actions}\label{sec-glob}


We can now define the notion which is in some sense inverse to the restriction of a global action.

\begin{definicao}\label{globalizacao-def}
    Let $\alpha$ be a partial action datum of $M$ on $X \in \C$. A \textit{globalization} of $\alpha$ is a pair $(\beta,\iota)$ formed by a global action $\beta$ of $M$ on an object $Y \in \C$ and a monomorphism $\iota : X \rightarrow Y$, such that $\alpha$ is the restriction of $\beta$ to $X$ via $\iota$.
\end{definicao}

By \cref{remark-induced-partial-action-morfismo-eh-datum-morphism}, the morphism $\iota$ in \cref{globalizacao-def} is a datum morphism from $\alpha$ to $\beta$.

\begin{definicao}\label{definicao-globalizacao-universal}
    Let $\alpha$ be a partial action datum of $M$ on $X \in \C$. A \textit{universal globalization} of $\alpha$ is a pair $(\beta,\iota)$ such that:
    
    \begin{enumerate}[label=(UG\arabic*)]
        \item\label{GU1} $(\beta, \iota)$ is a globalization of $\alpha$;

        \item\label{GU2} whenever $(\gamma,\kappa)$ is a globalization of $\alpha$, there exists a unique morphism $\kappa'$ such that the following diagram commutes.
        \begin{equation}\label{diagrama-GU2}
            \begin{tikzcd}
\alpha \arrow[r, "\iota"] \arrow[rd, "\kappa"'] & \beta \arrow[d, "\kappa'", dashed, pos = 0.2] \\
                                          & \gamma                        
\end{tikzcd}
        \end{equation}
        
    \end{enumerate}

\end{definicao}

\begin{rem}
    Observe that our concept of a universal globalization slightly differs from that of a globalization defined in \cite{saracco2022globalization} because we do not require the datum morphism $\iota$ in \cref{definicao-globalizacao-universal} to be a reflection of $\alpha$ in $\act{\C}$.

\end{rem}

Nevertheless, whenever a reflection $\iota$ of $\alpha$ in $\act{\C}$ exists, it gives us a necessary and sufficient condition for $\alpha$ to have a (universal) globalization.

\begin{teorema}\label{equivalencia-glob-universal-glob}
    Let $\alpha(m) = [\dom\alpha_m,\iota_m,\alpha_m]$ be a partial action datum of $M$ on $X \in \C$. Assume that $\alpha$ has a reflection $\iota : \alpha \rightarrow \beta$ in $\act{\C}$, with, say, $\beta$ acting on $Y \in \C$. Then the following are equivalent:

    \begin{enumerate}
        \item\label{glob-universal-glob-i} $(\beta,\iota)$ is a globalization of $\alpha$;
        \item\label{glob-universal-glob-ii} $(\beta,\iota)$ is a universal globalization of $\alpha$;
        \item\label{glob-universal-glob-iii} $\alpha$ has a universal globalization;
        \item\label{glob-universal-glob-iv} $\alpha$ has a (not necessarily universal) globalization;
        \item\label{glob-universal-glob-v} for all $m \in M$ the following diagram is a pullback diagram in $\C$.
        \begin{equation}\label{diagrama-equivalencia-glob-universal-glob}
              \begin{tikzcd}[ampersand replacement=\&, row sep = small]
                             \& \dom\alpha_m \arrow[ld, "\iota_m"'] \arrow[rd, "\alpha_m"] \&                                    \\
X \arrow[rd, "\beta_m \circ \iota"'] \&                                                     \& X \arrow[ld, "\iota"] \\
                             \& Y                                                   \&                                   
\end{tikzcd}
        \end{equation}
    \end{enumerate}
\end{teorema}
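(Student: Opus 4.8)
The plan is to prove the cycle of implications $\ref{glob-universal-glob-v}\Rightarrow\ref{glob-universal-glob-i}\Rightarrow\ref{glob-universal-glob-ii}\Rightarrow\ref{glob-universal-glob-iii}\Rightarrow\ref{glob-universal-glob-iv}\Rightarrow\ref{glob-universal-glob-v}$, since most of these arrows are nearly immediate once the first and last ones are in place. The implications $\ref{glob-universal-glob-ii}\Rightarrow\ref{glob-universal-glob-iii}$ and $\ref{glob-universal-glob-iii}\Rightarrow\ref{glob-universal-glob-iv}$ are trivial (a universal globalization is a globalization). For $\ref{glob-universal-glob-v}\Rightarrow\ref{glob-universal-glob-i}$: if the diagram \cref{diagrama-equivalencia-glob-universal-glob} is a pullback for every $m$, then by \cref{induced-partial-action-def} the partial action datum $\alpha$ is exactly the restriction of $\beta$ to $X$ via $\iota$ (using \cref{remark-induced-partial-action} to see the restriction is independent of the choice of pullback, so it literally equals $\alpha$), and $\iota$ is a monomorphism by hypothesis since reflections here land along a monomorphism — wait, I should double-check this: a reflection morphism need not a priori be monic, so I would instead note that \cref{diagrama-equivalencia-glob-universal-glob} being a pullback forces $\iota_m$, hence for $m=e$ (where $\dom\alpha_e=X$, $\iota_e=\id_X$) it forces $\iota=\beta_e\circ\iota$ to be monic as a pullback of $\iota$ along itself composed appropriately; more carefully, pullbacks preserve monos and $\iota$ appears on the right leg, so if any leg is already known monic we are fine — the cleanest route is to observe that $(\beta,\iota)$ being a globalization in the sense of \cref{globalizacao-def} is precisely what \cref{diagrama-equivalencia-glob-universal-glob} asserts together with $\iota$ monic, so I will build the monicity of $\iota$ into the argument by using \ref{AP1} applied at $m=e$.

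The heart of the proof is $\ref{glob-universal-glob-i}\Rightarrow\ref{glob-universal-glob-ii}$ and $\ref{glob-universal-glob-iv}\Rightarrow\ref{glob-universal-glob-v}$. For $\ref{glob-universal-glob-i}\Rightarrow\ref{glob-universal-glob-ii}$: suppose $(\beta,\iota)$ is a globalization; I must verify \ref{GU2}. Given any globalization $(\gamma,\kappa)$ of $\alpha$ with $\gamma$ a global action on $Z$, the morphism $\kappa\colon X\to Z$ is a datum morphism $\alpha\to\gamma$ by \cref{remark-induced-partial-action-morfismo-eh-datum-morphism}, and $\gamma\in\act\C$, so the universal property of the reflection $\iota\colon\alpha\to\beta$ yields a unique $\C$-morphism $\kappa'\colon\beta\to\gamma$ in $\act\C$ with $\kappa'\circ\iota=\kappa$. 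That $\kappa'$ is a morphism in $\act\C$ is exactly the statement that it makes \cref{diagrama-GU2} commute as a diagram of datum morphisms, so \ref{GU2} holds. The uniqueness of $\kappa'$ is inherited directly from the uniqueness clause of the reflection. This is short precisely because we already assumed a reflection exists; the reflection does all the work.

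The remaining implication $\ref{glob-universal-glob-iv}\Rightarrow\ref{glob-universal-glob-v}$ is the main obstacle, and the plan is: assume $\alpha$ admits some globalization $(\gamma,\kappa)$ with $\gamma$ a global action on $Z$. By \cref{globalizacao-def} and \cref{induced-partial-action-def}, for every $m$ the square
\begin{equation*}
\begin{tikzcd}[ampersand replacement=\&, row sep=small]
\& \dom\alpha_m \arrow[ld,"\iota_m"'] \arrow[rd,"\alpha_m"] \& \\
X \arrow[rd,"\gamma_m\circ\kappa"'] \& \& X \arrow[ld,"\kappa"] \\
\& Z \&
\end{tikzcd}
\end{equation*}
is a pullback. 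Since $\kappa\colon\alpha\to\gamma$ is a datum morphism to a global action and $\iota\colon\alpha\to\beta$ is the reflection, there is a unique $\act\C$-morphism $f\colon\beta\to\gamma$ with $f\circ\iota=\kappa$, i.e. a $\C$-morphism $f\colon Y\to Z$ with $f\circ\iota=\kappa$ and $f\circ\beta_m=\gamma_m\circ f$ for all $m$. Now I want to conclude that the square \cref{diagrama-equivalencia-glob-universal-glob} with bottom vertex $Y$ is a pullback. The key lemma I will need is a \emph{pullback cancellation} principle: if the outer square $\dom\alpha_m\to X\rightrightarrows X\to Y\xrightarrow{f} Z$ is a pullback, the inner $X\rightrightarrows X\to Y$ square commutes, and $f$ is a monomorphism, then the inner square is a pullback. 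Indeed the outer pullback is obtained from the inner square by post-composing both $X$-to-$Y$ legs with the mono $f$ (check: $f\circ(\beta_m\circ\iota)=\gamma_m\circ f\circ\iota=\gamma_m\circ\kappa$ and $f\circ\iota=\kappa$, matching the $(\gamma,\kappa)$-square); composing a cospan with a common monomorphism does not change its pullback, so the inner square is a pullback too. Thus the crux reduces to showing $f\colon Y\to Z$ is a monomorphism. For this I would argue: $f\circ\iota=\kappa$ is a monomorphism (it is the mono from the globalization $(\gamma,\kappa)$), but that only gives that $f$ is monic "on the image of $\iota$", not globally — so I expect to need an extra idea here, likely exploiting that $\iota$ is itself a reflection (hence, in good situations, an epimorphism-like universality) or reworking the $m=e$ instance: since $\dom\alpha_e=X$ and the $(\gamma,\kappa)$-square at $m=e$ reads as the pullback of $\kappa$ along $\kappa$ being $X$ via $\id_X$, and the reflection square at $m=e$ must then match, one extracts that $\iota$ is monic and then that $f$ restricted appropriately is forced to be monic; alternatively, one shows $\iota$ is an epimorphism in $\C$ (reflections into a full subcategory containing enough objects are epi) so that $f\circ\iota=\kappa$ monic plus $\iota$ epi is still not quite enough — the honest statement is that this step requires comparing \emph{two} reflections or invoking that the comparison $f$ is actually an isomorphism onto a subobject, and I would structure the proof so that the monicity of $f$ follows by constructing, from the reflection universal property applied twice, a one-sided inverse to $f$ on the relevant subobject. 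I anticipate this monicity-of-the-comparison-morphism argument is where the real care is needed, and it is the step I would write out in full detail; everything else is diagram chasing that follows routinely from \cref{induced-partial-action-def}, \cref{morfismo-de-dados-para-global}, and the uniqueness in the reflection property.
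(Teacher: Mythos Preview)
Your cycle of implications and the arguments for $\ref{glob-universal-glob-v}\Rightarrow\ref{glob-universal-glob-i}$, $\ref{glob-universal-glob-i}\Rightarrow\ref{glob-universal-glob-ii}$, and the trivial steps match the paper. Your observation that the $m=e$ instance of \cref{diagrama-equivalencia-glob-universal-glob} forces $\iota$ to be monic (since the pullback of $\iota$ along itself is $(X,\id_X,\id_X)$ iff $\iota$ is a monomorphism) is correct and fills in a detail the paper passes over silently.

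The genuine gap is in $\ref{glob-universal-glob-iv}\Rightarrow\ref{glob-universal-glob-v}$. You set up the comparison morphism $f\colon Y\to Z$ with $f\circ\iota=\kappa$ and $f\circ\beta_m=\gamma_m\circ f$ exactly as the paper does, but then you assert that the pullback-cancellation step requires $f$ to be a monomorphism, and you spend the rest of the proposal trying (unsuccessfully) to establish this. No such hypothesis is needed. The relevant elementary fact is: if a cospan $(X\xrightarrow{\beta_m\circ\iota} Y\xleftarrow{\iota} X)$ is post-composed with an arbitrary morphism $f\colon Y\to Z$ to obtain the cospan $(X\xrightarrow{\gamma_m\circ\kappa} Z\xleftarrow{\kappa} X)$, and if the square over $Y$ with apex $\dom\alpha_m$ already \emph{commutes} while the square over $Z$ is a \emph{pullback}, then the square over $Y$ is a pullback as well. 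Indeed, any cone over the $Y$-cospan becomes, after composing with $f$, a cone over the $Z$-cospan; the outer pullback produces the unique mediating morphism, and that same morphism works for the inner square (the legs into $X$ are unchanged), with uniqueness inherited because any mediating morphism for the inner square is automatically one for the outer square. The commutativity of the inner square is supplied by \cref{morfismo-de-dados-para-global} applied to the datum morphism $\iota$, which is exactly what the paper checks before declaring the conclusion ``customary''.

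So the fix is simply to drop the monicity requirement on $f$ and invoke this direct cancellation argument; the attempted detours through epimorphism properties of reflections or one-sided inverses are unnecessary and, as you suspected, do not go through in general.
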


\begin{proof}

    Implication $\cref{glob-universal-glob-i} \Rightarrow \cref{glob-universal-glob-ii}$ follows because $(\beta,\iota)$, being a globalization, satisfies \cref{GU1}, and \cref{GU2} is a consequence of the fact that $\iota$ is a reflection.

    Implications $\cref{glob-universal-glob-ii} \Rightarrow \cref{glob-universal-glob-iii}$ and $\cref{glob-universal-glob-iii} \Rightarrow \cref{glob-universal-glob-iv}$ are immediate and $\cref{glob-universal-glob-v} \Rightarrow \cref{glob-universal-glob-i}$ follows from the definition of a globalization, so it remains to check $\cref{glob-universal-glob-iv} \Rightarrow \cref{glob-universal-glob-v}$. 
    
    Assume thus that $\alpha$ has a globalization $(\gamma,\kappa)$, with, say, $\gamma$ acting on $Z \in \C$. By \cref{globalizacao-def}, $\alpha$ is the restriction of $\gamma$ to $X$ via $\kappa$. That is, for all $m \in M$ the diagram
    \begin{center}
              \begin{tikzcd}[ampersand replacement=\&, row sep = small]
                             \& \dom\alpha_m \arrow[ld, "\iota_m"'] \arrow[rd, "\alpha_m"] \&                                    \\
X \arrow[rd, "\gamma_m \circ \kappa"'] \&                                                     \& X \arrow[ld, "\kappa"] \\
                             \& Z                                                   \&                                   
\end{tikzcd}
        \end{center}
    is a pullback diagram.
    
    Since $\iota$ is a reflection of $\alpha$ in $\act{\C}$ and $\gamma$ is a global action with $\kappa$ a datum morphism from $\alpha$ to $\gamma$, there exists a unique datum morphism $\kappa' : \beta \rightarrow \gamma$ such that 
    \begin{equation}\label{kappa=kappa'-circ-iota}
        \kappa = \kappa' \circ \iota.
    \end{equation}
    Fix $m \in M$. As $\kappa'$ and $\iota$ are datum morphisms, by \cref{morfismo-de-dados-para-global} we have
    \begin{align}\label{gammag-circ-kappa'=-e-beta-g-circ-iota-circ-iota-g}
        \gamma_m \circ \kappa' = \kappa' \circ \beta_m \text{ and }
        \beta_m \circ \iota \circ \iota_m = \iota \circ \alpha_m.
    \end{align}
    This way, by \cref{kappa=kappa'-circ-iota,gammag-circ-kappa'=-e-beta-g-circ-iota-circ-iota-g}, the diagram 
        \begin{center}
        \begin{tikzcd}
                                                                           & \dom\alpha_m \arrow[ld, "\iota_m"'] \arrow[rd, "\alpha_m"] &                                              \\
X \arrow[rd, "\beta_m \circ \iota"] \arrow[rdd, "\gamma_m \circ \kappa"'] &                                                         & X \arrow[ld, "\iota"'] \arrow[ldd, "\kappa"] \\
                                                                           & Y \arrow[d, "\kappa'" pos = 0.2]                                  &                                              \\
                                                                           & Z                                                       &                                             
\end{tikzcd}
    \end{center}
    commutes. Now, since its perimeter is a pullback diagram, it is customary to check that the inner square is also a pullback diagram.
\end{proof}

\begin{rem}\label{diagrama-equivalencia-glob-universal-glob-comuta}
    Since $\iota$ is a datum morphism from $\alpha$ to $\beta$, diagram \cref{diagrama-equivalencia-glob-universal-glob} is already a commutative diagram, by \cref{morfismo-de-dados-para-global}.
\end{rem}

The following example shows that there may exist a universal globalization $(\beta,\iota)$ whose $\iota$ is not a reflection.

\begin{exemplo}\label{exemplo-glob-sem-reflexao}
    Let $\K$ be a field and $\C$ be the category of (associative\red{, not necessarily unital}) $\K$-algebras whose morphisms $X\to Y$ are $\K$-algebra homomorphisms $\varphi: X\to Y$ such that $\varphi(X)$ is an ideal in $Y$. Observe that $\C$ is a category \blue{where every morphism has a pullback with any monomorphism, and those are} inherited from $\Set$. Let $M = \Z_2 = \{0,1\}$ seen as a monoid under addition modulo $2$, and $X$ any unital non-zero $\K$-algebra.


    Consider the global action $\beta$ of $M$ on $X \times X$ given by $\beta_1(x,y) = (y,x)$, and the monomorphism $\iota : X \to X \times X$ with $\iota(x) = (x,0)$. Let $\alpha$ be the induced partial action of $\beta$ on $X$ via $\iota$, i.e. $\alpha(1) = [\{0\},\iota_1,\alpha_1]$. Notice that any element $(x,y) \in X \times X$ can be written as
    \begin{equation*}
        (x,y) = \iota(x) + \beta_1(\iota(y)).
    \end{equation*}
    
    We will verify that $(\beta,\iota)$ is a universal globalization of $\alpha$, but $\iota$ is not a reflection of $\alpha$ in $\act{\C}$. Since the pair $(\beta,\iota)$ satisfies \cref{GU1} by definition of $\alpha$, it suffices to check \cref{GU2}.
    
    Let $(\gamma,\kappa)$ be a globalization of $\alpha$, where, say, $\gamma$ is an action of $M$ on an algebra $Y$. Consider the map $\kappa' : X \times X \to Y$ given by 
    \begin{equation}\label{definicao-kappa'-exemplo-algebra}
        \kappa'(x,y) = \kappa(x) + \gamma_1(\kappa(y)).
    \end{equation}
    Then
    $$\kappa'(\iota(x)) = \kappa'(x,0) = \kappa(x),$$
    so $\kappa' \circ \iota = \kappa$. It is easy to see that $\kappa'$ is $\K$-linear and $\kappa'(X\times X)$ is an ideal of $Y$ (since $\kappa(X)$ is an ideal in $Y$ and $\gamma_1 \in \Aut(Y)$). Let us check that $\kappa'$ is multiplicative and that it is a datum morphism from $\beta$ to $\gamma$.
    
    Since $(\gamma,\kappa)$ is a globalization of $\beta$, the diagram
    \begin{center}
        \begin{tikzcd}
                                       & \{0\} \arrow[ld] \arrow[rd] &                        \\
X \arrow[rd, "\gamma_1 \circ \kappa"'] &                             & X \arrow[ld, "\kappa"] \\
                                       & Y                           &                       
\end{tikzcd}
    \end{center}
    is a pullback diagram. Thus, it follows that 
    \begin{equation}\label{gamma-m-kappa-cap-kappa=0}
        \gamma_1(\kappa(X)) \cap \kappa(X) = \{0\}.
    \end{equation} 

    Now, let $(x,y), (x',y') \in X \times X$. Then
    \begin{align}
        \kappa'(x,y) \kappa'(x',y') &= (\kappa(x) + \gamma_1(\kappa'(y)))(\kappa(x') + \gamma_1(\kappa(y'))) \notag\\
        &= \kappa(x)\kappa(x') + \gamma_1(\kappa(y)) \gamma_1(\kappa(y'))\label{kappa'-da-multiplicacao}\\
        &\quad+ \kappa(x) \gamma_1(\kappa(y')) + \gamma_1(\kappa(y)) \kappa(x').\label{k(x)gm(k(y'))+gm_1(k(y))k(x')}
    \end{align}
    Since $\kappa(X)$ and $\gamma_1(\kappa(X))$ are ideals in $Y$, both of the summands of \cref{k(x)gm(k(y'))+gm_1(k(y))k(x')} belong to $\gamma_1(\kappa(X)) \cap \kappa(X)$. Thus, by \cref{gamma-m-kappa-cap-kappa=0} the sum \cref{k(x)gm(k(y'))+gm_1(k(y))k(x')} is zero, so by \cref{kappa'-da-multiplicacao,definicao-kappa'-exemplo-algebra} it follows that
    \begin{align*}
        \kappa'(x,y) \kappa'(x',y') &= \kappa(x)\kappa(x') + \gamma_1(\kappa(y)) \gamma_1(\kappa(y')) = \kappa(x x') + \gamma_1(\kappa(y y')) \\
        &= \kappa'(x x',y y') = \kappa'((x,y)(x',y')),
    \end{align*}
    so $\kappa'$ is a morphism in $\C$.

    Moreover, by \cref{morfismo-de-dados-para-global}, $\kappa'$ is a datum morphism from $\beta$ to $\gamma$, since
    $$\kappa'(\beta_m(x,y)) = \kappa'(y,x) = \kappa(y) + \gamma_1(\kappa(x)) = \gamma_1(\gamma_1(\kappa(y)) + \kappa(x)) = \gamma_1(\kappa'(x,y)).$$

    So, there exists a morphism $\kappa' : \beta \to \gamma$ such that diagram \cref{diagrama-GU2} commutes. To check that it is unique, let $\kappa'' : \beta \to \gamma$ be a morphism such that $\kappa'' \circ \iota = \kappa$. Then by \cref{morfismo-de-dados-para-global} we have $\kappa'' \circ \beta_1 = \gamma_1 \circ \kappa''$, and it follows that
    \begin{align*}
        \kappa''(x,y) &= \kappa''((x,0) + \beta_m(y,0)) = \kappa''(\iota(x)) + \kappa''(\beta_m(\iota(y))) \\
        &= \kappa(x) + \gamma_1(\kappa''(\iota(y))) = \kappa(x) + \gamma_1(\kappa(y)) = \kappa'(x,y),
    \end{align*}
    so $\kappa'' = \kappa'$. Therefore, $(\beta,\iota)$ satisfies \cref{GU2}, and is, thus, a universal globalization of $\alpha$.

    However, $\iota$ is not a reflection of $\alpha$ in $\act{\C}$. To verify that, consider the global action $\gamma$ of $M$ on $X$ given by 
    \begin{equation*}
        \gamma_1 = id_X
    \end{equation*}
    and the morphism
    \begin{equation*}
        \kappa = id_X.
    \end{equation*}
    Notice that, by \cref{morfismo-de-dados-para-global}, $\kappa$ is a morphism in $\pact{\C}$ from $\alpha$ to $\gamma$, since $\gamma_0 \circ \kappa \circ \iota_0 = id_X = \kappa \circ \alpha_0$ and $\gamma_1 \circ \kappa \circ \iota_1 = 0 = \kappa \circ \alpha_1$.
    
    Suppose that there is a morphism $\kappa' : \beta \to \gamma$ such that 
    \begin{equation*}
        \kappa' \circ \iota = \kappa.
    \end{equation*}
    Then one can check that $\kappa'$ must be given by
    $$\kappa'(x,y) = \kappa(x) + \gamma_1(\kappa(y)) = x + y,$$
    which is not an algebra morphism, since $\kappa'(1,0) \kappa'(0,1) = 1 \neq 0 = \kappa'((1,0)(0,1))$. 
\end{exemplo}

\begin{rem}
    It follows from \cref{exemplo-glob-sem-reflexao,equivalencia-glob-universal-glob} that $\alpha \in \pact{\C}$ does not admit a reflection in $\act{\C}$. 
\end{rem}

\red{
\begin{rem}
   Let $\C$ be the category from \cref{exemplo-glob-sem-reflexao}. Then the partial actions of a group $G$ on objects of $\C$ correspond to the partial actions of $G$ on algebras in the sense of \cite{DE}. Given such a partial action $\alpha$ whose domains are unital ideals, it can be shown that an \textit{enveloping action}~\cite{DE} of $\alpha$ is a universal globalization of $\alpha$.\footnote{The proof of this fact does not fit in this article due to its length.} 
\end{rem}
}

\subsection{Reflection in terms of a colimit}

Now we are going to provide conditions for a partial action datum to have a reflection in $\act{\C}$. To this end, for the remainder of this subsection fix a partial action datum $\alpha(m) = [\dom\alpha_m,\iota_m,\alpha_m]$ of $M$ on $X \in \C$.

Define the category $I$ with $Ob(I) = (M \times M) \sqcup M$, where for each $(m,n) \in Ob(I)$ there is a morphism from $(m,n)$ to $mn$ and a morphism from $(m,n)$ to $m$, and there are no other non-trivial morphisms.

\begin{definicao}\label{funtor-associado}
    The \textit{functor\footnote{Strictly speaking, the functor is not unique, since it depends on the choice of representatives of the isomorphism classes $\af(m)$, $m\in M$.} associated to} $\alpha$ is $F : I \to \C$ that maps $(m,n) \in M \times M$ to $\dom\alpha_n$ and $m \in M$ to $X$, and, given $m,n \in G$, takes the morphism $(m,n) \to mn$ to $\iota_n$ and the morphism $(m,n) \to m$ to $\alpha_n$, as illustrated.

\begin{center}$
    \begin{tikzcd}
  & {(m,n)} \arrow[ld] \arrow[rd] &   \\
mn &                               & m
\end{tikzcd}
\quad
 \begin{tikzcd}[column sep=3.0em]
    {} \arrow[rightsquigarrow]{r}{F} 
    & {}
    \end{tikzcd}
\quad
\begin{tikzcd}
  & \dom\alpha_n \arrow[ld, "\iota_n"'] \arrow[rd, "\alpha_n"] &   \\
X &                                                                          & X
\end{tikzcd}$
\end{center}
 \end{definicao}

We are going to show that a colimit of $F$ induces a reflection of $\alpha$ in $\act{\C}$. To this end, introduce the following notation.

\begin{definicao}
Let $Y \in \C$. We denote by $\Delta(Y)$ the \textit{constant functor} from $I$ to $\C$ that maps all objects in $I$ to $Y$ and all morphisms in $I$ to $id_Y$.
\end{definicao}

\begin{lema}\label{transformacao-natural-permutacao}
Let $\eta = \{\begin{tikzcd}[column sep=small, cramped] F(i) \arrow[r, "\eta_i"] & Y \end{tikzcd} : i \in I\}$ be a natural transformation from $F$ to $\Delta(Y)$. Then for each $m \in M$ the family $\eta^m = \{\begin{tikzcd}[column sep=small, cramped] F(i) \arrow[r, "\eta^m_i"] & Y \end{tikzcd} : i \in I\}$, where
\begin{align}\label{definicao-eta-g}
    \eta^m_i =\begin{cases}
         \eta_{(ms,t)}, & \text{ if } i = (s,t) \in M \times M,\\
         \eta_{ms}, & \text{ if } i = s \in M,
    \end{cases}
\end{align}
is also a natural transformation from $F$ to $\Delta(Y)$.

\end{lema}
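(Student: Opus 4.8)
The plan is to unwind the definition of a natural transformation $F\Rightarrow\Delta(Y)$ into a collection of commutativity relations indexed by pairs in $M\times M$, and then to observe that $\eta^m$ satisfies the same relations after reindexing along left multiplication by $m$.

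First I would record what naturality of $\eta$ means concretely. By the construction of $I$, its only non-trivial morphisms are, for each $(s,t)\in M\times M$, an arrow $(s,t)\to st$ and an arrow $(s,t)\to s$, which $F$ sends to $\iota_t\colon\dom\alpha_t\to X$ and $\alpha_t\colon\dom\alpha_t\to X$ respectively. Since $\Delta(Y)$ sends every morphism to $id_Y$, the naturality of $\eta$ is exactly the pair of identities
\[
\eta_{st}\circ\iota_t \;=\; \eta_{(s,t)} \;=\; \eta_s\circ\alpha_t \qquad\text{for all } s,t\in M,
\]
so in particular $\eta_{(s,t)}$ is the common composite $\eta_{st}\circ\iota_t=\eta_s\circ\alpha_t$.

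Next I would check that the components of $\eta^m$ have the correct domains and codomains. Because $F(s,t)=\dom\alpha_t$ depends only on the second coordinate, $F(ms,t)=\dom\alpha_t=F(s,t)$, so $\eta^m_{(s,t)}=\eta_{(ms,t)}\colon F(s,t)\to Y$; similarly $F(ms)=X=F(s)$, so $\eta^m_s=\eta_{ms}\colon F(s)\to Y$. Hence each $\eta^m_i$ is indeed a morphism $F(i)\to Y$. Then I would verify the naturality of $\eta^m$, which by the previous paragraph amounts to checking, for all $s,t\in M$,
\[
\eta^m_{st}\circ\iota_t \;=\; \eta^m_{(s,t)} \;=\; \eta^m_s\circ\alpha_t .
\]
Substituting \cref{definicao-eta-g}, the outer terms become $\eta_{m(st)}\circ\iota_t$ and $\eta_{ms}\circ\alpha_t$, while the middle term is $\eta_{(ms,t)}$. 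Using associativity of $M$ to rewrite $m(st)=(ms)t$, these are precisely the naturality identities for $\eta$ applied to the pair $(ms,t)\in M\times M$, which hold by the first step. Therefore $\eta^m$ is a natural transformation from $F$ to $\Delta(Y)$.

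The argument is essentially bookkeeping, so I do not anticipate a genuine obstacle; the only place where the structure of $M$ intervenes is the associativity identity $m(st)=(ms)t$, which is exactly what makes the reindexing $(s,t)\mapsto(ms,t)$ compatible with both arrows out of $(s,t)$ in $I$.
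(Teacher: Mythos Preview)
Your proof is correct and follows essentially the same approach as the paper's: both reduce naturality of $\eta^m$ to the two commutativity conditions indexed by pairs $(s,t)$ and verify them by invoking the naturality of $\eta$ at the shifted pair $(ms,t)$ via associativity $m(st)=(ms)t$. The only minor addition in your version is the explicit check that the components of $\eta^m$ have the correct domains, which the paper leaves implicit.
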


\begin{proof}
Fix $m \in M$. Since the only non-trivial morphisms in $I$ are $(s,t) \to st$ and $(s,t) \to s$ for each $s, t \in M$, to verify that $\eta^m$ is a natural transformation from $F$ to $\Delta(Y)$, it suffices to check that the diagrams 
\begin{equation}\label{diagramas-eta_g-transformacao-natural}
\begin{tikzcd}
\dom\alpha_t \arrow[r, "{\eta^m_{(s,t)}}"] \arrow[d, "\iota_t"'] & Y \arrow[d, "id"] \\
X \arrow[r, "\eta^m_{st}"] & Y
\end{tikzcd} \quad
\text{and} \quad
\begin{tikzcd}
\dom\alpha_n \arrow[r, "{\eta^m_{(s,t)}}"] \arrow[d, "\alpha_t"'] & Y \arrow[d, "id"] \\
X \arrow[r, "\eta^m_s"] & Y
\end{tikzcd}
\end{equation}
commute for each $s,t \in M$.

Let $s, t \in M$. Since $\eta$ is a natural transformation from $F$ to $\Delta(Y)$, $\eta_{m(st)} \circ \iota_t = \eta_{(ms)t} \circ \iota_t = \eta_{(ms,t)}$. So, we have the commutativity of the left diagram of \cref{diagramas-eta_g-transformacao-natural}, in view of \cref{definicao-eta-g}. 

The commutativity of the right diagram of \cref{diagramas-eta_g-transformacao-natural} follows similarly.
\end{proof}

Assume that there exists a colimit $\eta: F \rightarrow \Delta(Y)$ of the functor $F$ associated to $\alpha$ and let $\eta^m$ be the corresponding natural transformation from \cref{transformacao-natural-permutacao}.
Given a fixed $m \in M$, by the universal property of $\eta$ there exists a unique natural transformation $\beta_m$ from $\Delta(Y)$ to $\Delta(Y)$ such that $\eta^m = \beta_m \circ \eta$.

That is, for each $m \in M$ there exists a unique morphism \begin{tikzcd}[column sep=small, cramped] Y \arrow[r, "\beta_m"] & Y \end{tikzcd} such that
\begin{equation}\label{beta_g-circ-eta_(a b)}
    \beta_m \circ \eta_{(s,t)} = \eta^m_{(s,t)} = \eta_{(ms,t)}
\end{equation}
for all $s,t \in M$, and
\begin{equation}\label{beta_g-circ-eta_a}
    \beta_m \circ \eta_s = \eta^m_s = \eta_{ms}
\end{equation}
for all $s \in M$.

Consider then the partial action datum $\beta$ of $M$ on $Y$ given by
\begin{equation}\label{beta-definicao}
    \beta(m) = [Y, id_Y, \beta_m] 
\end{equation}
for each $m \in M$.

\begin{lema}
    The partial action datum $\beta$ defined in \cref{beta-definicao} is a global action of $M$ on $Y$.
\end{lema}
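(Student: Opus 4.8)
The plan is to verify the two axioms \cref{AG1} and \cref{AG2} for $\beta$. Since $\beta$ is already defined by $\beta(m) = [Y, id_Y, \beta_m]$, it is automatically a partial action datum of the form required in the definition of a global action, so only the equalities $\beta(e) = [Y, id_Y, id_Y]$ and $\beta_n \circ \beta_m = \beta_{nm}$ need to be checked; by the uniqueness part of the universal property of the colimit $\eta$, both reduce to checking that the relevant morphisms agree after precomposition with all the components $\eta_i$ of $\eta$.

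For \cref{AG1}, I would show $\beta_e = id_Y$. By \cref{beta_g-circ-eta_(a b),beta_g-circ-eta_a} we have $\beta_e \circ \eta_{(s,t)} = \eta_{(es,t)} = \eta_{(s,t)}$ and $\beta_e \circ \eta_s = \eta_{es} = \eta_s$ for all $s,t \in M$, using that $e$ is the identity of $M$. Thus $\beta_e$ and $id_Y$ induce the same natural transformation $\eta \Rightarrow \Delta(Y)$ (namely $\eta$ itself viewed through $\Delta$), so by the uniqueness in the universal property of the colimit $\beta_e = id_Y$, giving $\beta(e) = [Y,id_Y,id_Y]$.

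For \cref{AG2}, fix $m,n \in M$; I want $\beta_n \circ \beta_m = \beta_{nm}$. Precomposing with a component $\eta_{(s,t)}$ and applying \cref{beta_g-circ-eta_(a b)} twice gives
\begin{equation*}
\beta_n \circ \beta_m \circ \eta_{(s,t)} = \beta_n \circ \eta_{(ms,t)} = \eta_{(n(ms),t)} = \eta_{((nm)s,t)} = \beta_{nm} \circ \eta_{(s,t)},
\end{equation*}
using associativity in $M$; similarly \cref{beta_g-circ-eta_a} gives $\beta_n \circ \beta_m \circ \eta_s = \beta_n \circ \eta_{ms} = \eta_{n(ms)} = \eta_{(nm)s} = \beta_{nm} \circ \eta_s$. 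Hence $\beta_n \circ \beta_m$ and $\beta_{nm}$ agree on every component of $\eta$, i.e. they are induced by the same natural transformation from $F$ to $\Delta(Y)$, so the uniqueness clause of the universal property forces $\beta_n \circ \beta_m = \beta_{nm}$.

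I do not expect any real obstacle here: the argument is a routine application of the universal property of a colimit, and the only subtlety is being careful that the natural transformations one is comparing really do agree on all objects of $I$ (both the $(s,t) \in M\times M$ part and the $s \in M$ part), which the computations above handle. The computations themselves are immediate from the defining relations \cref{beta_g-circ-eta_(a b),beta_g-circ-eta_a} together with the monoid axioms.
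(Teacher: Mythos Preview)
Your proposal is correct and follows essentially the same approach as the paper's proof: both verify \cref{AG1} and \cref{AG2} by precomposing with the colimit components $\eta_{(s,t)}$ and $\eta_s$, using \cref{beta_g-circ-eta_(a b),beta_g-circ-eta_a} together with the monoid axioms, and then invoking the uniqueness clause in the universal property of the colimit.
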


\begin{proof}
     We check that $\beta$ satisfies \cref{AG1,AG2}.
     
    \textit{\cref{AG1}.} By \cref{definicao-eta-g},
        $$id_Y \circ \eta_{(s,t)} = \eta_{(s,t)} = \eta_{(es,t)} = \eta^e_{(s,t)} = \beta_e \circ \eta_{(s,t)}$$
        for all $s,t \in M$, and
        $$id_Y \circ \eta_s = \eta_s = \eta_{es} = \eta^e_s=\beta_e \circ \eta_s$$
        for all $s \in M$. Thanks to the uniqueness of $\beta_e$, it follows that $id_Y = \beta_e$. Thus, $\beta(e) = [Y,id_Y,id_Y]$.

    \textit{\cref{AG2}.} Let $m,n \in M$. By \cref{beta_g-circ-eta_(a b),beta_g-circ-eta_a}, for all $s,t \in M$ we have
    \begin{align*}
        (\beta_n \circ \beta_m) \circ \eta_{(s,t)} &= \beta_n \circ (\beta_m \circ \eta_{(s,t)}) = \beta_n \circ \eta_{(ms,t)} = \eta_{(n(ms),t)}\\
        &= \eta_{((nm)s,t)}=\beta_{nm}\circ\eta_{(s,t)}
    \end{align*}
        and
        $$(\beta_n \circ \beta_m) \circ \eta_s = \beta_n \circ (\beta_m \circ \eta_s) = \beta_n \circ \eta_{ms} = \eta_{n(ms)} = \eta_{(nm)s}=\beta_{nm}\circ\eta_s.$$
        Thus, by the uniqueness of $\beta_{nm}$ we have $\beta_n \circ \beta_m = \beta_{nm}$.
\end{proof}

\begin{definicao}\label{acao-global-associada}
    Let $\eta$ be a colimit of the functor associated to $\alpha$. By the \textit{global action associated to} $\eta$ we mean $\beta \in \act{\C}$ given by \cref{beta-definicao}.
\end{definicao}

\begin{proposicao}
    Let $\eta : F \rightarrow \Delta(Y)$ be a colimit of the functor $F$ associated to $\alpha$, and let $\beta$ be the global action associated to $\eta$. Then $\eta_e : X \rightarrow Y$ is a datum morphism from $\alpha$ to $\beta$.
\end{proposicao}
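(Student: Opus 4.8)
The plan is to reduce the claim to \cref{morfismo-de-dados-para-global}. Since $\beta$ is the global action associated to $\eta$, it has the form $\beta(m) = [Y, id_Y, \beta_m]$, so that lemma says $\eta_e$ is a datum morphism from $\alpha$ to $\beta$ exactly when the diagram \cref{equacao-morfismo-de-dados-para-global} commutes for every $m \in M$, that is, when $\beta_m \circ \eta_e \circ \iota_m = \eta_e \circ \alpha_m$ for all $m$. Thus the whole proof comes down to this one identity, and the natural place to find it is the object $(e,m)$ of the index category $I$.

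First I would record the two instances of naturality of $\eta \colon F \to \Delta(Y)$ at the two non-trivial morphisms out of $(e,m)$. The arrow $(e,m) \to em = m$ is sent by $F$ to $\iota_m$, so naturality gives $\eta_m \circ \iota_m = \eta_{(e,m)}$; the arrow $(e,m) \to e$ is sent by $F$ to $\alpha_m$, so naturality gives $\eta_e \circ \alpha_m = \eta_{(e,m)}$. Hence the right-hand side of the desired identity is already identified with $\eta_{(e,m)}$. For the left-hand side, I would use the defining property \cref{beta_g-circ-eta_a} of $\beta_m$ with $s = e$, namely $\beta_m \circ \eta_e = \eta_{me} = \eta_m$, whence $\beta_m \circ \eta_e \circ \iota_m = \eta_m \circ \iota_m = \eta_{(e,m)}$. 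Combining the two computations gives $\beta_m \circ \eta_e \circ \iota_m = \eta_{(e,m)} = \eta_e \circ \alpha_m$, which is precisely the commutativity required by \cref{morfismo-de-dados-para-global}, so $\eta_e$ is a datum morphism from $\alpha$ to $\beta$.

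I do not expect any genuine obstacle; this is a bookkeeping verification built entirely out of the naturality of the colimit cocone and the defining relations of the $\beta_m$. The only point deserving a little care is that there is no naturality square directly relating $\eta_e$ with $\iota_m$ --- the codomain of $\iota_m$ is the copy of $X$ sitting at the object $m$ of $I$, not the one at $e$ --- so one must first rewrite $\beta_m \circ \eta_e$ as $\eta_m$ and only then invoke naturality. Once that is noticed, everything falls out immediately.
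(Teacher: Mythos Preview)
Your proof is correct and follows essentially the same approach as the paper: both reduce via \cref{morfismo-de-dados-para-global} to the identity $\beta_m \circ \eta_e \circ \iota_m = \eta_e \circ \alpha_m$, then establish it using naturality of $\eta$ at the two arrows out of $(e,m)$ together with the relation $\beta_m \circ \eta_e = \eta_m$ from \cref{beta_g-circ-eta_a}. Your remark about needing to rewrite $\beta_m \circ \eta_e$ as $\eta_m$ before invoking naturality is exactly the step the paper takes as well.
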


\begin{proof}

    Given $m \in M$, since $\eta$ is a natural transformation from $F$ to $\Delta(Y)$, we have $\eta_m \circ \iota_m = \eta_{em} \circ \iota_m = \eta_{(e,m)} = \eta_e \circ \alpha_m$. By \cref{beta_g-circ-eta_a} we have $\beta_m \circ \eta_e = \eta_m$. Thus,
    $$(\beta_m \circ \eta_e) \circ \iota_m = \eta_m \circ \iota_m = \eta_e \circ \alpha_m,$$
    and so, by \cref{morfismo-de-dados-para-global}, $\eta_e$ is a datum morphism as desired.
\end{proof}

\begin{lema}\label{xi-eh-transformacao-natural}
    Let $(\gamma,f)$ be a pair formed by a global action $\gamma$ of $M$ on $Z \in \C$ and a datum morphism $f : \alpha \rightarrow \gamma$. Then the family $\xi = \{\begin{tikzcd}[column sep=small, cramped] F(i) \arrow[r, "\xi_i"] & Z \end{tikzcd} : i \in I\}$, where
    \begin{equation}\label{xi-definicao-cases}
        \xi_i = \begin{cases}
        \gamma_{mn} \circ f \circ \iota_n, & \text{ if } i = (m,n) \in M \times M,\\
        \gamma_m \circ f, & \text{ if } i = m \in M,
    \end{cases}
    \end{equation}
    is a natural transformation from $F$ to $\Delta(Z)$.    
    
\end{lema}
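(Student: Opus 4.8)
The plan is to verify directly, morphism by morphism, that $\xi$ satisfies the naturality condition with respect to $F$ and $\Delta(Z)$. Since $\Delta(Z)$ sends every morphism of $I$ to $\id_Z$, naturality amounts to the equality $\xi_i=\xi_j\circ F(\varphi)$ for every morphism $\varphi\colon i\to j$ of $I$. Identity morphisms are trivially handled, so by the description of $I$ it suffices, for each pair $(m,n)\in M\times M$, to check the two equalities
\[
\xi_{(m,n)}=\xi_{mn}\circ\iota_n
\qquad\text{and}\qquad
\xi_{(m,n)}=\xi_m\circ\alpha_n,
\]
coming from the two arrows $(m,n)\to mn$ (sent by $F$ to $\iota_n$) and $(m,n)\to m$ (sent by $F$ to $\alpha_n$).

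The first equality is immediate from the definition \cref{xi-definicao-cases}: one simply computes $\xi_{mn}\circ\iota_n=(\gamma_{mn}\circ f)\circ\iota_n=\gamma_{mn}\circ f\circ\iota_n=\xi_{(m,n)}$, with no hypotheses needed.

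For the second equality I would invoke the only nontrivial input, namely that $f$ is a datum morphism from $\alpha$ to the \emph{global} action $\gamma$. By \cref{morfismo-de-dados-para-global} this is equivalent to $\gamma_k\circ f\circ\iota_k=f\circ\alpha_k$ for all $k\in M$. Applying this with $k=n$ and then using the global action axiom \cref{AG2} for $\gamma$, i.e.\ $\gamma_m\circ\gamma_n=\gamma_{mn}$, gives
\[
\xi_m\circ\alpha_n=\gamma_m\circ f\circ\alpha_n=\gamma_m\circ(\gamma_n\circ f\circ\iota_n)=(\gamma_m\circ\gamma_n)\circ f\circ\iota_n=\gamma_{mn}\circ f\circ\iota_n=\xi_{(m,n)}.
\]
This establishes both naturality squares, hence $\xi$ is a natural transformation from $F$ to $\Delta(Z)$. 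There is no genuine obstacle here: the only point requiring attention is to use the reformulation of ``datum morphism into a global action'' furnished by \cref{morfismo-de-dados-para-global} together with the composition identity \cref{AG2}, rather than trying to argue from the full diagram \cref{diagrama-G-datum} directly.
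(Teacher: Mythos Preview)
Your proof is correct and follows essentially the same approach as the paper: reduce naturality to the two squares associated with $(m,n)\to mn$ and $(m,n)\to m$, dispatch the first directly from \cref{xi-definicao-cases}, and obtain the second from \cref{morfismo-de-dados-para-global} together with \cref{AG2}. The computations you give match the paper's line by line.
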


\begin{proof}

    We shall verify that for each $m, n \in M$ the diagrams
    \begin{equation}\label{diagramas-xi-transformacao-natural}
        \begin{tikzcd}
\dom\alpha_n \arrow[r, "{\xi_{(m,n)}}"] \arrow[d, "\iota_n"'] & Z \arrow[d, "id"] \\
X \arrow[r, "\xi_{mn}"]                                             & Z               
\end{tikzcd} \quad
    \text{and} \quad
        \begin{tikzcd}
\dom\alpha_n \arrow[r, "{\xi_{(m,n)}}"] \arrow[d, "\alpha_n"'] & Z \arrow[d, "id"] \\
X \arrow[r, "\xi_m"]                                                 & Z                
\end{tikzcd}
    \end{equation}
    commute. 

    The commutativity of the left diagram of \cref{diagramas-xi-transformacao-natural} follows directly by \cref{xi-definicao-cases}.
    

    For the second diagram, by \cref{morfismo-de-dados-para-global} we have
    $$f \circ \alpha_n = \gamma_n \circ f \circ \iota_n.$$
 
    Thus, since $\gamma$ is a global action, we have
    \begin{splii}
        \xi_m \circ \alpha_n &= (\gamma_m \circ f) \circ \alpha_n = \gamma_m \circ (f \circ \alpha_n) = \gamma_m \circ (\gamma_n \circ f \circ \iota_n)\\
        &= (\gamma_m \circ \gamma_n) \circ f \circ \iota_n = \gamma_{mn} \circ f \circ \iota_n = \xi_{(m,n)},
    \end{splii}
    giving us the commutativity of the right diagram of \cref{diagramas-xi-transformacao-natural}. 
\end{proof}



\begin{teorema}\label{colimite-implica-reflexao}
    Let \begin{tikzcd}[column sep=small, cramped] F \arrow[r, "\eta"] & \Delta(Y) \end{tikzcd} be a colimit of the functor $F$ associated to $\alpha$ and $\beta$ the global action associated to $\eta$. Then \begin{tikzcd}[column sep=small, cramped] \alpha \arrow[r, "\eta_e"] & \beta \end{tikzcd} is a reflection of $\alpha$ in $\act{\C}$.
\end{teorema}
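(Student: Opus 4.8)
The plan is to verify the universal property of a reflection directly. We already know from the preceding proposition that $\eta_e : \alpha \to \beta$ is a datum morphism into a global action, so what remains is: given any global action $\gamma$ on $Z$ and any datum morphism $f : \alpha \to \gamma$, produce a \emph{unique} datum morphism $f' : \beta \to \gamma$ with $f' \circ \eta_e = f$. The natural candidate comes from \cref{xi-eh-transformacao-natural}: the family $\xi$ defined there is a natural transformation from $F$ to $\Delta(Z)$, so by the universal property of the colimit $\eta : F \to \Delta(Y)$ there is a unique morphism $f' : Y \to Z$ in $\C$ with $f' \circ \eta_i = \xi_i$ for all $i \in I$. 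This $f'$ is forced, and it is the only possible candidate, so uniqueness on the level of $\C$-morphisms is essentially free; the work is to check that $f'$ is actually a datum morphism $\beta \to \gamma$ and that it extends $f$.

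\textbf{Extending $f$.} Taking $i = e \in M$ in the colimit equation gives $f' \circ \eta_e = \xi_e = \gamma_e \circ f = f$, using \cref{AG1} for $\gamma$. So $f' \circ \eta_e = f$ holds immediately.

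\textbf{$f'$ is a datum morphism.} Since $\beta(m) = [Y, id_Y, \beta_m]$ is a global action, by \cref{morfismo-de-dados-para-global} it suffices to check $f' \circ \beta_m = \gamma_m \circ f'$ for all $m \in M$ (taking the datum morphism condition in the form appropriate to a source that is itself a global action — more precisely, one checks $\gamma_m \circ f' \circ id_Y = f' \circ \beta_m$). By the uniqueness part of the colimit's universal property, two morphisms $Y \to Z$ agree as soon as they agree after precomposition with every $\eta_i$. So I would compute both $f' \circ \beta_m$ and $\gamma_m \circ f'$ composed with $\eta_s$ ($s \in M$) and with $\eta_{(s,t)}$ ($s,t \in M$). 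For the first: $f' \circ \beta_m \circ \eta_s = f' \circ \eta_{ms} = \xi_{ms} = \gamma_{ms} \circ f = \gamma_m \circ \gamma_s \circ f = \gamma_m \circ \xi_s = \gamma_m \circ f' \circ \eta_s$, using \cref{beta_g-circ-eta_a}, the colimit equation for $f'$, and \cref{AG2} for $\gamma$. The computation on the $\eta_{(s,t)}$ is analogous, using \cref{beta_g-circ-eta_(a b)} and $\xi_{(s,t)} = \gamma_{st} \circ f \circ \iota_t$. Hence $f' \circ \beta_m = \gamma_m \circ f'$, so $f'$ is a datum morphism from $\beta$ to $\gamma$.

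\textbf{Uniqueness of $f'$ as a datum morphism.} Suppose $f'' : \beta \to \gamma$ is another datum morphism with $f'' \circ \eta_e = f$. Then $f''$ is in particular a $\C$-morphism $Y \to Z$, and by the uniqueness in the colimit's universal property it is enough to show $f'' \circ \eta_i = \xi_i$ for all $i \in I$. For $i = s \in M$: since $\eta_s = \beta_s \circ \eta_e$ by \cref{beta_g-circ-eta_a}, and $f''$ being a datum morphism into a global action gives $f'' \circ \beta_s = \gamma_s \circ f''$ (via \cref{morfismo-de-dados-para-global}), we get $f'' \circ \eta_s = f'' \circ \beta_s \circ \eta_e = \gamma_s \circ f'' \circ \eta_e = \gamma_s \circ f = \xi_s$. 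For $i = (s,t)$: using naturality of $\eta$, $\eta_{(s,t)} = \eta_{st} \circ \iota_t$; hence $f'' \circ \eta_{(s,t)} = f'' \circ \eta_{st} \circ \iota_t = \gamma_{st} \circ f \circ \iota_t = \xi_{(s,t)}$ by the $i = st$ case just proved. Thus $f'' = f'$.

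\textbf{Expected main obstacle.} There is no deep obstacle: the argument is a bookkeeping exercise in the universal property of the colimit combined with \cref{morfismo-de-dados-para-global}. The only point requiring mild care is correctly invoking the colimit's \emph{uniqueness} clause in two distinct roles — once to \emph{define} $f'$ and once to \emph{verify equations} satisfied by $f'$ and by a competitor $f''$ — and making sure the natural transformations being compared ($f' \circ \eta$ vs.\ $\xi$, and $\beta_m \circ \eta$ vs.\ the appropriate family) are literally natural transformations $F \to \Delta(Z)$ or $F \to \Delta(Y)$, which is exactly what \cref{xi-eh-transformacao-natural} and \cref{transformacao-natural-permutacao} were set up to guarantee.
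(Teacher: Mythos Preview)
Your proposal is correct and follows essentially the same route as the paper: construct $f'$ from the colimit universal property applied to the natural transformation $\xi$ of \cref{xi-eh-transformacao-natural}, verify $f'\circ\eta_e=f$, check $f'\circ\beta_m=\gamma_m\circ f'$ by precomposing with all $\eta_i$ and invoking uniqueness, and prove uniqueness of $f'$ by showing any competitor $f''$ also satisfies $f''\circ\eta_i=\xi_i$. The only cosmetic difference is that the paper names the intermediate morphism $\overline{\xi^m}$ (obtained from the shifted transformation $\xi^m$) and shows both $f'\circ\beta_m$ and $\gamma_m\circ f'$ equal it, whereas you compare the two composites directly; the content is identical.
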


\begin{proof}
    Let $(\gamma,f)$ be a pair formed by a global action $\gamma$ of $G$ on $Z \in \C$ and a datum morphism $f : \alpha \rightarrow \gamma$. We must show that there exists a unique datum morphism $f' : \beta \rightarrow \gamma$ such that the diagram
    \begin{equation}\label{diagrama-colimite-implica-reflexao}
            \begin{tikzcd}
\alpha \arrow[r, "\eta_e"] \arrow[rd, "f"'] & \beta \arrow[d, "f'", dashed] \\
                                          & \gamma                        
\end{tikzcd}
        \end{equation}
    commutes.

    Let $\xi$ be the natural transformation \cref{xi-definicao-cases} from $F$ to $\Delta(Z)$ constructed in \cref{xi-eh-transformacao-natural}.
    
    By the universal property of $\eta$, there exists a unique morphism \begin{tikzcd}[column sep=small, cramped] Y \arrow[r, "f'"] & Z \end{tikzcd} such that 
    \begin{equation}\label{colimite-implica-reflexao-f'-def}
        \xi_i = f' \circ \eta_i
    \end{equation}
    for each $i \in I$. Since $\gamma_e = id_Z$, by \cref{xi-definicao-cases} we have $\xi_e = f$, whence $f' \circ \eta_e = f$ by \cref{colimite-implica-reflexao-f'-def}. So, diagram \cref{diagrama-colimite-implica-reflexao} commutes modulo the verification that $f'$ is a datum morphism from $\beta$ to $\gamma$, which we are going to do now. By \cref{morfismo-de-dados-para-global}, this will be accomplished if we show that for each $m \in M$
    \begin{equation}\label{f'-circ-betag=gammag-circ-f'}
        f' \circ \beta_m = \gamma_m \circ f'.
    \end{equation}

    To this end, fix $m \in M$ and consider the natural transformation $\xi^m = \{\begin{tikzcd}[column sep=small, cramped] F(i) \arrow[r, "\xi^m_i"] & Z \end{tikzcd} : i \in I\}$ from $F$ to $\Delta(Z)$, where
        \begin{equation}\label{xi-g-i-definicao}
       \xi_i^m = \begin{cases}
            \xi_{(ms,t)}, & \text{ if } i = (s,t) \in M \times M,\\
            \xi_{ms}, & \text{ if } i = s \in M,
        \end{cases}
    \end{equation}
        constructed from $\xi$ as in \cref{transformacao-natural-permutacao}.
    
    By the universal property of $\eta$, there exists a unique morphism \begin{tikzcd}[column sep=small, cramped] Y \arrow[r, "\overline{\xi^m}"] & Z \end{tikzcd} such that for each $i \in I$
    \begin{equation}\label{overline-xi-g-def}
        \xi^m_i = \overline{\xi^m} \circ \eta_i.
    \end{equation}

    Since $\gamma$ is a global action, for all $s \in M$ we have
    \begin{equation}\label{gammag-circ-gammaa=gamma-ga}
        \gamma_m \circ \gamma_s = \gamma_{ms}.
    \end{equation}
    
    Thus, for all $s,t \in M$,
    $$\gamma_m \circ f' \circ \eta_s \overset{\cref{colimite-implica-reflexao-f'-def}}{=} \gamma_m \circ \xi_s \overset{\cref{xi-definicao-cases}}{=} \gamma_m \circ \gamma_s \circ f \overset{\cref{gammag-circ-gammaa=gamma-ga}}{=} \gamma_{ms} \circ f \overset{\cref{xi-definicao-cases}}{=} \xi_{ms} \overset{\cref{xi-g-i-definicao}}{=} \xi^m_s \overset{\cref{overline-xi-g-def}}{=} \overline{\xi^m} \circ \eta_s$$
    and
    \begin{splii}
        \gamma_m \circ f' \circ \eta_{(s,t)} &\overset{\cref{colimite-implica-reflexao-f'-def}}{=} \gamma_m \circ \xi_{(s,t)} \overset{\cref{xi-definicao-cases}}{=} \gamma_m \circ \gamma_{st} \circ f \circ \iota_t \overset{\cref{gammag-circ-gammaa=gamma-ga}}{=} \gamma_{m(st)} \circ f \circ \iota_t \\
        &= \gamma_{(ms)t} \circ f \circ \iota_t \overset{\cref{xi-definicao-cases}}{=} \xi_{(ms,t)} \overset{\cref{xi-g-i-definicao}}{=} \xi^m_{(s,t)} \overset{\cref{overline-xi-g-def}}{=} \overline{\xi^m} \circ \eta_{(s,t)},
    \end{splii}
    so the uniqueness of $\overline{\xi^m}$ in \cref{overline-xi-g-def} gives
    \begin{equation}\label{gamma-g-circ-f'}
        \gamma_m \circ f' = \overline{\xi^m}.
    \end{equation}

    On the other hand, for all $s, t \in M$ we have
    $$f' \circ \beta_m \circ \eta_s \overset{\cref{beta_g-circ-eta_a}}{=} f' \circ \eta_{ms} \overset{\cref{colimite-implica-reflexao-f'-def}}{=} \xi_{ms} \overset{\cref{xi-g-i-definicao}}{=} \xi^m_s \overset{\cref{overline-xi-g-def}}{=} \overline{\xi^m} \circ \eta_s$$
    and
    $$f' \circ \beta_m \circ \eta_{(s,t)} \overset{\cref{beta_g-circ-eta_(a b)}}{=} f' \circ \eta_{(ms,t)} \overset{\cref{colimite-implica-reflexao-f'-def}}{=} \xi_{(ms,t)} \overset{\cref{xi-g-i-definicao}}{=} \xi^m_{(s,t)} \overset{\cref{overline-xi-g-def}}{=} \overline{\xi^m} \circ \eta_{(s,t)},$$
    so the uniqueness of $\overline{\xi^m}$ in \cref{overline-xi-g-def} also gives
    \begin{equation}\label{f'-circ-beta-g}
        f' \circ \beta_m = \overline{\xi^m}.
    \end{equation}
    
    This way, \cref{gamma-g-circ-f',f'-circ-beta-g} complete the proof of \cref{f'-circ-betag=gammag-circ-f'}, so $f'$ is a datum morphism from $\beta$ to $\gamma$, as desired.

    Finally, let us check the uniqueness of $f'$ as a datum morphism from $\beta$ to $\gamma$. To do so, let $f''$ be a datum morphism from $\beta$ to $\gamma$ such that
    \begin{equation}\label{colimite-implica-reflexao-f''-def}
        f = f'' \circ \eta_e.
    \end{equation}

    Since $f''$ is a datum morphism, by \cref{morfismo-de-dados-para-global} we have 
    \begin{equation}\label{f''-circ-betag=gammag-circ-f''}
        f'' \circ \beta_m = \gamma_m \circ f''.
    \end{equation}
    Thus, given $s,t \in G$, we have
    \begin{equation}\label{f''-circ-eta_a=xi_a}
        f'' \circ \eta_s \overset{\cref{beta_g-circ-eta_a}}{=} f'' \circ \beta_s \circ \eta_e \overset{\cref{f''-circ-betag=gammag-circ-f''}}{=} \gamma_s \circ f'' \circ \eta_e \overset{\cref{colimite-implica-reflexao-f''-def}}{=} \gamma_s \circ f \overset{\cref{xi-definicao-cases}}{=} \xi_s.
    \end{equation}

    Since $\eta$ is a natural transformation from $F$ to $\Delta(Y)$, for each $s,t \in M$ we have
    \begin{equation}\label{eta-ab=eta-a-circ-iotab-1a}
        \eta_{(s,t)} = \eta_{st} \circ \iota_t.
    \end{equation}
    
    Hence,
    \begin{equation}\label{f''-circ-eta_ab=ex_ab}
        f'' \circ \eta_{(s,t)} \overset{\cref{eta-ab=eta-a-circ-iotab-1a}}{=} f'' \circ \eta_{st} \circ \iota_t \overset{\cref{f''-circ-eta_a=xi_a}}{=} \xi_{st} \circ \iota_t \overset{\cref{xi-definicao-cases}}{=} \gamma_{st} \circ f \circ \iota_t \overset{\cref{xi-definicao-cases}}{=} \xi_{(s,t)}.
    \end{equation}
    
    So, by \cref{f''-circ-eta_a=xi_a,f''-circ-eta_ab=ex_ab}, $\xi_i = f'' \circ \eta_i$ for all $i \in I$. Since $f'$ is the unique morphism satisfying \cref{colimite-implica-reflexao-f'-def}, we have $f'' = f'$, as desired.
\end{proof}

\begin{corolario}
    Let $\C$ be a cocomplete category. Then $\alpha$ has a universal globalization if and only if $\alpha$ has a (not necessarily universal) globalization.
\end{corolario}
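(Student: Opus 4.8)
The plan is to obtain the corollary as a formal consequence of \cref{colimite-implica-reflexao} and \cref{equivalencia-glob-universal-glob}, with cocompleteness used only to guarantee the existence of the relevant colimit.

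One implication is immediate and I would dispose of it first: by definition, a strong universal globalization of $\alpha$ is in particular a globalization of $\alpha$, so if $\alpha$ has a strong universal globalization then it certainly has a globalization.

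For the converse, assume $\alpha$ has a (not necessarily strong universal) globalization. First I would note that the category $I$ underlying the functor $F$ associated to $\alpha$ (see \cref{funtor-associado}) is small --- its class of objects is $(M\times M)\sqcup M$ and each object has only finitely many morphisms out of it --- so cocompleteness of $\C$ yields a colimit $\eta\colon F\to\Delta(Y)$ of $F$. Letting $\beta$ be the global action associated to $\eta$ (\cref{acao-global-associada}), \cref{colimite-implica-reflexao} tells us that $\eta_e\colon\alpha\to\beta$ is a reflection of $\alpha$ in $\act{\C}$. Next I would feed this reflection into \cref{equivalencia-glob-universal-glob}: since $\alpha$ has a globalization, condition \cref{glob-universal-glob-iv} of that theorem holds for $\alpha$, hence condition \cref{glob-universal-glob-ii} holds as well, that is, $(\beta,\eta_e)$ is a universal globalization of $\alpha$. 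As $\eta_e$ is, in addition, a reflection of $\alpha$ in $\act{\C}$, the pair $(\beta,\eta_e)$ is the desired strong universal globalization.

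I do not expect a genuine obstacle here, as the proof is essentially bookkeeping with the two main theorems. The only steps that need a line of justification are that cocompleteness really applies (i.e.\ that $I$ is small, so that $F$ admits a colimit in $\C$) and that \cref{equivalencia-glob-universal-glob} may be invoked in the first place --- which is precisely what \cref{colimite-implica-reflexao} secures, by turning the colimit of $F$ into a reflection of $\alpha$ in $\act{\C}$.
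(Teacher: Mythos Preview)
Your proof is correct and follows essentially the same approach as the paper's: use cocompleteness to obtain a colimit of $F$, invoke \cref{colimite-implica-reflexao} to get a reflection of $\alpha$ in $\act{\C}$, and then apply \cref{equivalencia-glob-universal-glob}. Your version is simply more explicit (e.g.\ noting that $I$ is small and unpacking which implications of \cref{equivalencia-glob-universal-glob} are used), but the argument is the same.
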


\begin{proof}
    Since $\C$ is cocomplete, the functor $F$ associated to $\alpha$ has a colimit. In this case, by \cref{colimite-implica-reflexao}, $\alpha$ has a reflection in $\act{\C}$. Thus, the result follows by \cref{equivalencia-glob-universal-glob}.
\end{proof}

\subsection{Reflection in terms of coproducts and a coequalizer}

One particular case of a colimit of the functor $F$ gives us a stronger but more tangible condition for a partial action to have a reflection in $\act{\C}$, where we assume that certain coproducts and a certain coequalizer exist in $\C$.

We first introduce the following notation.

\begin{definicao}
    Let $\coprod_{i \in J} X_i$ be a coproduct in $\C$ with inclusions $u_i : X_i \rightarrow \coprod_{i \in J} X_i$ and $\{f_i : X_i \rightarrow Y\}_{i \in J}$ a family of morphisms. Define the \textit{coproduct} of $\{f_i\}_{i \in J}$ (with respect to $u_i$) to be the unique morphism $\coprod_{i \in J} f_i$ from $\coprod_{i \in J} X_i$ to $Y$ such that $(\coprod_{i \in J} f_i) \circ u_i = f_i$ for all $i \in J$.
\end{definicao}

Fix a partial action datum $\alpha(m) = [\dom\alpha_m,\iota_m,\alpha_m]$ of $M$ on $X \in \C$ and assume that the coproducts $\coprod_{m \in M} X$ and $\coprod_{(m,n) \in M \times M} \dom\alpha_n$ exist in $\C$. For each $m, n \in M$, denote the associated inclusion morphisms by
\begin{align}\label{definicao-morfismos-de-inclusao}
    u_m : X \rightarrow \coprod_{m \in M} X \text{ and } u_{(m,n)} : \dom\alpha_n \rightarrow \coprod_{(m,n) \in M \times M} \dom\alpha_n,
\end{align}
and all the coproducts of morphisms in this subsection will be with respect to one of the two families in \cref{definicao-morfismos-de-inclusao}.

Consider the morphisms $p, q : \coprod_{(m,n) \in M \times M} \dom\alpha_n \to \coprod_{m \in M} X$ given as follows:
\begin{align}\label{definicao-p-e-q}
    p = \coprod_{(m,n) \in M \times M} (u_{mn} \circ \iota_n) \text{ and } q = \coprod_{(m,n) \in M \times M} (u_{m} \circ \alpha_n).
\end{align}

We shall now work towards verifying that a coequalizer of $p$ and $q$ induces a colimit of the functor $F$ associated to $\alpha$.

\begin{lema}\label{transformacao-natural-coequaliza-p-q}
    Let $Z \in \C$ and $\xi = \{\begin{tikzcd}[column sep=small, cramped] F(i) \arrow[r, "\xi_i"] & Z \end{tikzcd} : i \in I\}$ be a natural transformation $F \to \Delta(Z)$. Then the coproduct $\coprod_{m \in M} \xi_m : \coprod_{m \in M} X \to Z$ satisfies
    \begin{equation}\label{transformacao-natural-coequaliza-p-q-equacao}
        \left(\coprod_{m \in M} \xi_m\right) \circ p = \left(\coprod_{m \in M} \xi_m\right) \circ q.
    \end{equation}
\end{lema}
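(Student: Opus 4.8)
The plan is to use the universal property of the coproduct $\coprod_{(m,n)\in M\times M}\dom\alpha_n$: to prove the identity \cref{transformacao-natural-coequaliza-p-q-equacao} between two morphisms out of this coproduct, it suffices to precompose both sides with each inclusion $u_{(m,n)}:\dom\alpha_n\to\coprod_{(m,n)\in M\times M}\dom\alpha_n$ and check that the results coincide. So I would fix $m,n\in M$ and compute the two composites.

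For the first one, by the defining property of the coproduct of morphisms together with \cref{definicao-p-e-q} we have $p\circ u_{(m,n)}=u_{mn}\circ\iota_n$, and since $\bigl(\coprod_{k\in M}\xi_k\bigr)\circ u_s=\xi_s$ for every $s\in M$, this gives $\bigl(\coprod_{k\in M}\xi_k\bigr)\circ p\circ u_{(m,n)}=\xi_{mn}\circ\iota_n$. Symmetrically, $q\circ u_{(m,n)}=u_m\circ\alpha_n$ yields $\bigl(\coprod_{k\in M}\xi_k\bigr)\circ q\circ u_{(m,n)}=\xi_m\circ\alpha_n$. Hence the lemma is reduced to the single equality $\xi_{mn}\circ\iota_n=\xi_m\circ\alpha_n$, for all $m,n\in M$.

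Finally, this last equality is nothing but naturality of $\xi\colon F\to\Delta(Z)$ applied to the two non-trivial arrows of $I$ emanating from $(m,n)$: by \cref{funtor-associado}, $F$ sends $(m,n)\to mn$ to $\iota_n$ and $(m,n)\to m$ to $\alpha_n$, while $\Delta(Z)$ sends both of them to $id_Z$, so the corresponding naturality squares read $\xi_{mn}\circ\iota_n=\xi_{(m,n)}$ and $\xi_m\circ\alpha_n=\xi_{(m,n)}$, whence the two sides agree. Thus the composites of $\bigl(\coprod_{m\in M}\xi_m\bigr)\circ p$ and $\bigl(\coprod_{m\in M}\xi_m\bigr)\circ q$ with every inclusion $u_{(m,n)}$ coincide, and the universal property of the coproduct delivers \cref{transformacao-natural-coequaliza-p-q-equacao}. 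I do not expect any genuine obstacle here: the argument is a short two-step diagram chase, and the only point requiring a little care is keeping apart the two coproduct/inclusion families of \cref{definicao-morfismos-de-inclusao} and the way $p$ and $q$ reindex them ($u_{(m,n)}\mapsto u_{mn}$ versus $u_{(m,n)}\mapsto u_m$).
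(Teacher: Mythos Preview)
Your proof is correct and follows essentially the same approach as the paper: reduce to checking the equality after precomposing with each inclusion $u_{(m,n)}$, compute both sides using \cref{definicao-p-e-q} and the defining property of $\coprod_{m\in M}\xi_m$, and conclude via the naturality of $\xi$ that each side equals $\xi_{(m,n)}$.
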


\begin{proof}  
    To prove \cref{transformacao-natural-coequaliza-p-q-equacao}, it suffices to verify that for all $m, n \in M$ we have
    $$\left(\coprod_{m \in M} \xi_m\right) \circ p \circ u_{(m,n)} = \left(\coprod_{m \in M} \xi_m\right) \circ q \circ u_{(m,n)}.$$

    

    Denote, for simplicity, $\coprod_{m \in M} \xi_m$ by $\Xi$. Fix $m,n \in M$. Then we have
    $$\Xi \circ (p \circ u_{(m,n)}) = \Xi \circ (u_{mn} \circ \iota_n) = (\Xi \circ u_{mn}) \circ \iota_n = \xi_{mn} \circ \iota_n = \xi_{(m,n)},$$
    where the last equality follows from the fact that $\xi$ is a natural transformation (see \cref{diagramas-xi-transformacao-natural}). Similarly, we have
    $$\Xi \circ (q \circ u_{(m,n)}) = \Xi \circ (u_m \circ \alpha_n) = (\Xi \circ u_m) \circ \alpha_n = \xi_m \circ \alpha_n = \xi_{(m,n)}.$$
\end{proof}

\begin{proposicao}\label{coprod-coequalizador-implica-colimite}
    Assume that there exists a coequalizer \begin{tikzcd}[column sep=small, cramped] \coprod_{m \in M} X \arrow[r, "c"] & Y \end{tikzcd} of $p$ and $q$. Then the family $\eta = \{\begin{tikzcd}[column sep=small, cramped] F(i) \arrow[r, "\eta_i"] & Y \end{tikzcd} : i \in I\}$ such that
        \begin{equation}\label{eta-definicao-coequalizador}
   \eta_i = \begin{cases}
        c \circ p \circ u_{(s,t)} = c \circ q \circ u_{(s,t)}, & \text{ if } i = (s,t) \in M \times M,\\
        c \circ u_s, & \text{ if } i = s \in M,
    \end{cases}
\end{equation}
    is a colimit of the functor $F$ associated to $\alpha$.
\end{proposicao}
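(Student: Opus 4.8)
The plan is to check directly that $\eta$ is a natural transformation $F\to\Delta(Y)$ (that is, a cocone under $F$ with vertex $Y$) and then that it is the \emph{initial} such cocone, which is precisely the assertion that it is a colimit of $F$.

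First I would record that the two expressions in the first line of \cref{eta-definicao-coequalizador} coincide: since $c$ is a coequalizer of $p$ and $q$ we have $c\circ p=c\circ q$, hence $c\circ p\circ u_{(s,t)}=c\circ q\circ u_{(s,t)}$ for every $(s,t)\in M\times M$, so $\eta_{(s,t)}$ is well defined. For naturality, recall that the only nontrivial morphisms of $I$ are $(s,t)\to st$, sent by $F$ to $\iota_t$, and $(s,t)\to s$, sent by $F$ to $\alpha_t$; so it suffices to verify $\eta_{st}\circ\iota_t=\eta_{(s,t)}$ and $\eta_s\circ\alpha_t=\eta_{(s,t)}$ for all $s,t\in M$. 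The first follows from the definition \cref{definicao-p-e-q} of $p$, which gives $p\circ u_{(s,t)}=u_{st}\circ\iota_t$, whence $\eta_{st}\circ\iota_t=c\circ u_{st}\circ\iota_t=c\circ p\circ u_{(s,t)}=\eta_{(s,t)}$. The second follows from the definition of $q$, which gives $q\circ u_{(s,t)}=u_s\circ\alpha_t$, whence $\eta_s\circ\alpha_t=c\circ u_s\circ\alpha_t=c\circ q\circ u_{(s,t)}=c\circ p\circ u_{(s,t)}=\eta_{(s,t)}$, the penultimate equality again using $c\circ p=c\circ q$. This shows $\eta\colon F\to\Delta(Y)$ is natural.

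For universality, let $\xi=\{\xi_i\colon F(i)\to Z\}_{i\in I}$ be an arbitrary natural transformation $F\to\Delta(Z)$. By \cref{transformacao-natural-coequaliza-p-q} the coproduct $\coprod_{m\in M}\xi_m\colon\coprod_{m\in M}X\to Z$ coequalizes $p$ and $q$, so the universal property of the coequalizer $c$ produces a unique $f'\colon Y\to Z$ with $f'\circ c=\coprod_{m\in M}\xi_m$. I would then check $f'\circ\eta_i=\xi_i$ for all $i\in I$: for $i=s\in M$ this is $f'\circ c\circ u_s=(\coprod_{m\in M}\xi_m)\circ u_s=\xi_s$, and for $i=(s,t)\in M\times M$ it is $f'\circ c\circ p\circ u_{(s,t)}=(\coprod_{m\in M}\xi_m)\circ u_{st}\circ\iota_t=\xi_{st}\circ\iota_t=\xi_{(s,t)}$, the last step being the naturality of $\xi$. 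For uniqueness, if $f''\colon Y\to Z$ also satisfies $f''\circ\eta_i=\xi_i$ for all $i$, then $(f''\circ c)\circ u_s=f''\circ\eta_s=\xi_s=(\coprod_{m\in M}\xi_m)\circ u_s$ for every $s\in M$, so $f''\circ c=\coprod_{m\in M}\xi_m$ by the universal property of the coproduct $\coprod_{m\in M}X$, and then the uniqueness clause in the universal property of the coequalizer forces $f''=f'$.

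The argument is essentially bookkeeping; the one point where something beyond the coproduct universal properties is used is the second naturality identity $\eta_s\circ\alpha_t=\eta_{(s,t)}$, which genuinely relies on the coequalizer relation $c\circ p=c\circ q$. That is exactly where the existence of the coequalizer (rather than merely the two coproducts) is essential, so I expect it to be the conceptual crux, even though it is computationally immediate once the definitions of $p$ and $q$ are unwound.
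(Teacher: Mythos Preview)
Your proof is correct and follows essentially the same approach as the paper: verify naturality of $\eta$ via the definitions of $p$ and $q$, invoke \cref{transformacao-natural-coequaliza-p-q} to factor an arbitrary cocone $\xi$ through the coequalizer, and deduce uniqueness from the coproduct and coequalizer universal properties. The only cosmetic differences are that you explicitly note the well-definedness of $\eta_{(s,t)}$ and add a closing remark on where $c\circ p=c\circ q$ is genuinely used; the paper omits both but the arguments are otherwise identical.
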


\begin{proof}
    Firstly, note that $\eta$ is a natural transformation from $F$ to $\Delta(Y)$, since for each $m, n \in G$ the diagrams
    \begin{equation*}
        \begin{tikzcd}
\dom\alpha_n \arrow[r, "{\eta_{(m,n)}}"] \arrow[d, "\iota_n"'] & Y \arrow[d, "id"] \\
X \arrow[r, "\eta_{mn}"]                                             & Y                
\end{tikzcd} \quad
    \text{and} \quad
        \begin{tikzcd}
\dom\alpha_n \arrow[r, "{\eta_{(m,n)}}"] \arrow[d, "\alpha_n"'] & Y \arrow[d, "id"] \\
X \arrow[r, "\eta_m"]                                                 & Y                
\end{tikzcd}
    \end{equation*}
    commute by \cref{definicao-p-e-q,eta-definicao-coequalizador}.

    Given $Z \in \C$, let $\xi = \{\begin{tikzcd}[column sep=small, cramped] F(i) \arrow[r, "\xi_i"] & Z \end{tikzcd} : i \in I\}$ be a natural transformation from $F$ to $\Delta(Z)$. Let us show that there exists a unique morphism $\varphi : Y \rightarrow Z$ such that 
    \begin{equation}\label{xi-=-varphi-circ-eta}
        \xi = \varphi \circ \eta.
    \end{equation}
    
    By \cref{transformacao-natural-coequaliza-p-q}, we have $\left(\coprod_{m \in M} \xi_m\right) \circ p = \left(\coprod_{m \in M} \xi_m\right) \circ q$, so, by the universal property of $c$ as a coequalizer of $p$ and $q$, there exists a unique morphism $\varphi : Y \to Z$ such that
    \begin{equation}\label{coprodxig=varphi-circ-c}
        \coprod_{m \in M} \xi_m = \varphi \circ c.
    \end{equation}

    Now, $\varphi$ satisfies \cref{xi-=-varphi-circ-eta}, since for all $m, n \in M$ by \cref{coprodxig=varphi-circ-c,eta-definicao-coequalizador} we have
    $$\varphi \circ \eta_m = \varphi \circ (c \circ u_m) = (\varphi \circ c) \circ u_m = \left(\coprod_{m \in M} \xi_m\right) \circ u_m = \xi_m$$
    and by \cref{coprodxig=varphi-circ-c,eta-definicao-coequalizador,definicao-p-e-q} together with the fact that $\xi$ is a natural transformation
    \begin{align*}
        \varphi \circ \eta_{(m,n)} &= \varphi \circ (c \circ p \circ u_{(m,n)}) = (\varphi \circ c) \circ (p \circ u_{(m,n)}) \\
        &= \left(\coprod_{m \in M} \xi_m\right) \circ (u_{mn} \circ \iota_n) = \xi_{mn} \circ \iota_n = \xi_{(m,n)}.
    \end{align*}

    It remains to show that $\varphi$ is the unique morphism satisfying \cref{xi-=-varphi-circ-eta}. For, assume that $\varphi'$ is a morphism such that $\xi = \varphi' \circ \eta$. Then for each $m \in M$ using \cref{eta-definicao-coequalizador} we have
    $$\varphi' \circ c \circ u_m = \varphi' \circ \eta_m = \xi_m,$$
    so that $\varphi' \circ c = \coprod_{m \in M} \xi_m$. Since $\varphi$ is the unique morphism that satisfies \cref{coprodxig=varphi-circ-c}, we conclude that $\varphi' = \varphi$.
\end{proof}

Let $\beta(m) = [Y,id_Y,\beta_m]$ be the global action associated (see \cref{acao-global-associada}) to the colimit $\eta$ from \cref{coprod-coequalizador-implica-colimite}. 
Notice that for each $m \in M$, $\beta_m$ can be described precisely as the unique morphism such that for all $s,t \in M$
\begin{equation}\label{global-action-associated-to-coequalizer}
    \beta_m \circ c \circ u_s = c \circ u_{ms},
\end{equation}
since, in this case,
$$\beta_m \circ c \circ p \circ u_{s,t} = c \circ p \circ u_{ms,t}$$
automatically follows from \cref{global-action-associated-to-coequalizer}.

As a consequence of \cref{coprod-coequalizador-implica-colimite,colimite-implica-reflexao}, we get the following.

\begin{corolario}\label{coeq-implica-reflexao}
    Let \begin{tikzcd}[column sep=small, cramped] \coprod_{m \in M} X \arrow[r, "c"] & Y \end{tikzcd} be a coequalizer of $p$ and $q$. Then $c \circ u_e : \alpha \rightarrow \beta$ is a reflection of $\alpha$ in $\act{\C}$.
\end{corolario}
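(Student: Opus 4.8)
The plan is to deduce the statement directly by chaining \cref{coprod-coequalizador-implica-colimite} with \cref{colimite-implica-reflexao}, so essentially no new argument is needed. First I would apply \cref{coprod-coequalizador-implica-colimite}: since by hypothesis $c$ is a coequalizer of $p$ and $q$, that proposition guarantees that the family $\eta = \{\eta_i : F(i) \to Y\}_{i \in I}$ given by \cref{eta-definicao-coequalizador} is a colimit of the functor $F$ associated to $\alpha$.

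Next I would recall that the global action $\beta$ appearing in the statement is, by construction (fixed just before the corollary), precisely the global action associated in the sense of \cref{acao-global-associada} to this colimit $\eta$; its structure morphisms $\beta_m$ are the unique ones satisfying \cref{global-action-associated-to-coequalizer}. Hence \cref{colimite-implica-reflexao} applies verbatim to the colimit $\eta$ and its associated global action $\beta$, yielding that the morphism $\eta_e : \alpha \to \beta$ is a reflection of $\alpha$ in $\act{\C}$.

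Finally, I would only have to identify $\eta_e$ with $c \circ u_e$, which is immediate from the second case of the defining formula \cref{eta-definicao-coequalizador} applied to $i = e \in M$. Therefore $c \circ u_e : \alpha \to \beta$ is the desired reflection. There is no genuine obstacle here; the only point requiring care is bookkeeping — confirming that the $\beta$ fixed right before the corollary is indeed the global action associated to the same colimit $\eta$ produced by \cref{coprod-coequalizador-implica-colimite}, which holds by definition.
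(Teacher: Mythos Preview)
Your proposal is correct and matches the paper's own reasoning exactly: the corollary is stated as an immediate consequence of \cref{coprod-coequalizador-implica-colimite} and \cref{colimite-implica-reflexao}, and the identification $\eta_e = c \circ u_e$ via \cref{eta-definicao-coequalizador} is precisely the link you describe.
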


So, in this case, we can work with a universal globalization of $\alpha$ in terms of coproducts and a coequalizer, due to \cref{equivalencia-glob-universal-glob}.

In a final approach to finding conditions for $\alpha$ to have a reflection in $\act{\C}$, we shall define structures of global actions on the coproducts we worked with so far, in order to find necessary and sufficient conditions in terms of a coequalizer in $\act{\C}$.


For each $m \in M$, consider the morphisms
\begin{equation}\label{definicao-varphi}
    \varphi_m = \coprod_{(s,t) \in M \times M} u_{(ms,t)} : \coprod_{(s,t) \in M \times M} \dom\alpha_t \to \coprod_{(s,t) \in M \times M} \dom\alpha_t
\end{equation}
     and
     \begin{equation}\label{definicao-psi}
         \psi_m = \coprod_{s \in M} u_{ms} : \coprod_{s \in M} X \to \coprod_{s \in M} X.
     \end{equation}


It is a simple verification that the partial action data
$$\varphi(m) = [\textstyle\coprod_{(s,t) \in M \times M} \dom\alpha_t,id,\varphi_m] \text{ and } \psi(m) = [\textstyle\coprod_{s \in M} X,id,\psi_m]$$
are global actions of $M$ on $\coprod_{(m,n) \in M \times M} \dom\alpha_n$ and $\coprod_{m \in M} X$, respectively.

\begin{proposicao}
    The morphisms $p,q :\coprod_{(m,n) \in M \times M} \dom\alpha_n \to \coprod_{m \in M} X$ given by \cref{definicao-p-e-q} are morphisms from $\varphi$ to $\psi$ in $\act{\C}$.
\end{proposicao}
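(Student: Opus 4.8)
The plan is to verify directly that $p$ and $q$ satisfy the condition of \cref{morfismo-de-dados-para-global} with respect to the global actions $\varphi$ and $\psi$, i.e.\ that for each $m\in M$ we have $\psi_m\circ p=p\circ\varphi_m$ and $\psi_m\circ q=q\circ\varphi_m$. Since both source and target are coproducts, it suffices in each case to check the identity after precomposing with the inclusion $u_{(s,t)}:\dom\alpha_t\to\coprod_{(s,t)\in M\times M}\dom\alpha_t$ for every $(s,t)\in M\times M$.

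First I would handle $p$. Using $p=\coprod_{(s,t)\in M\times M}(u_{st}\circ\iota_t)$ and $\varphi_m=\coprod_{(s,t)\in M\times M}u_{(ms,t)}$ from \cref{definicao-p-e-q,definicao-varphi}, together with the defining property of coproducts of morphisms, I compute
\begin{align*}
    p\circ\varphi_m\circ u_{(s,t)}&=p\circ u_{(ms,t)}=u_{(ms)t}\circ\iota_t=u_{mst}\circ\iota_t,\\
    \psi_m\circ p\circ u_{(s,t)}&=\psi_m\circ u_{st}\circ\iota_t=u_{m(st)}\circ\iota_t=u_{mst}\circ\iota_t,
\end{align*}
which agree. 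Here the inner computation of $\psi_m\circ u_{st}=u_{mst}$ uses \cref{definicao-psi} and the defining property of $\coprod_{s\in M}u_{ms}$, and associativity of the monoid $M$ gives $(ms)t=m(st)$. Then I would repeat the same computation for $q=\coprod_{(s,t)\in M\times M}(u_s\circ\alpha_t)$: we get $q\circ\varphi_m\circ u_{(s,t)}=q\circ u_{(ms,t)}=u_{ms}\circ\alpha_t$ and $\psi_m\circ q\circ u_{(s,t)}=\psi_m\circ u_s\circ\alpha_t=u_{ms}\circ\alpha_t$, again equal.

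Since these identities hold after precomposing with every inclusion $u_{(s,t)}$, the universal property of the coproduct $\coprod_{(s,t)\in M\times M}\dom\alpha_t$ forces $\psi_m\circ p=p\circ\varphi_m$ and $\psi_m\circ q=q\circ\varphi_m$ for all $m\in M$. As $\varphi(m)=[\coprod_{(s,t)}\dom\alpha_t,\,\mathrm{id},\,\varphi_m]$ and $\psi(m)=[\coprod_s X,\,\mathrm{id},\,\psi_m]$ are global actions (as noted just before the statement), \cref{morfismo-de-dados-para-global} applies and shows that $p$ and $q$ are datum morphisms from $\varphi$ to $\psi$, hence morphisms in $\act{\C}$. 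I do not anticipate a genuine obstacle here: the whole argument is a bookkeeping exercise in the universal property of coproducts and the associativity of $M$; the only point that requires a little care is keeping the two families of inclusions $u_m$ and $u_{(m,n)}$ straight and remembering that, by the convention fixed after \cref{definicao-morfismos-de-inclusao}, every coproduct of morphisms appearing is taken with respect to the appropriate one of these two families.
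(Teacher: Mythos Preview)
Your proof is correct and follows essentially the same approach as the paper: reduce via \cref{morfismo-de-dados-para-global} to the equivariance condition $\psi_m\circ p=p\circ\varphi_m$ (resp.\ $q$), and verify it by precomposing with the coproduct inclusions $u_{(s,t)}$. The paper only writes out the computation for $q$ and declares $p$ analogous, whereas you spell out both; otherwise the arguments are identical.
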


     \begin{proof}
     We shall only verify that $q$ is a datum morphism, as the verification for $p$ is analogous. By \cref{morfismo-de-dados-para-global}, it suffices to show that
     $$q \circ \varphi_m = \psi_m \circ q,$$
     for each $m \in M$. The latter is equivalent to
     $$q \circ \varphi_m \circ u_{(s,t)} = \psi_m \circ q \circ u_{(s,t)} \text{ for each } (s,t) \in M \times M.$$
     Indeed, fixed $m \in M$, by \cref{definicao-p-e-q,definicao-varphi,definicao-psi}, for all $s, t\in M$ we have
     \begin{splii}
         q \circ \varphi_m \circ u_{(s,t)} &= q \circ u_{(ms,t)} = u_{ms} \circ \alpha_t = (\psi_m \circ u_s) \circ \alpha_t \\
         &= \psi_m \circ (u_s \circ \alpha_t)
         = \psi_m \circ q \circ u_{(s,t)}.
     \end{splii}
     \end{proof}

\begin{lema}\label{H-esta-bem-definida}
Let $\gamma$ be a global action of $M$ on $Z \in \C$ and \begin{tikzcd}[column sep=small, cramped] X \arrow[r, "f"] & Z \end{tikzcd} a morphism in $\C$. Then $\coprod_{m \in M} (\gamma_m \circ f)$ is a morphism from $\psi$ to $\gamma$ in $\act{\C}$.
\end{lema}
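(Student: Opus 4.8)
The plan is to invoke \cref{morfismo-de-dados-para-global} to turn the assertion into a single identity of morphisms out of the coproduct $\coprod_{m \in M} X$, and then verify that identity by testing against the coproduct inclusions. Set $H \coloneqq \coprod_{m \in M}(\gamma_m \circ f)$, the unique morphism $\coprod_{m \in M} X \to Z$ characterized by $H \circ u_s = \gamma_s \circ f$ for all $s \in M$. Since $\gamma$ is a global action, $\gamma(m) = [Z, id_Z, \gamma_m]$, so \cref{morfismo-de-dados-para-global} applies with $\alpha = \psi$ and $\beta = \gamma$; because $\psi$ is itself a global action, its structure monomorphisms are identities, and the square \cref{equacao-morfismo-de-dados-para-global} reduces to the single equation $\gamma_m \circ H = H \circ \psi_m$, which is therefore all that must be checked, for each $m \in M$.

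Fixing $m \in M$, I would prove $\gamma_m \circ H = H \circ \psi_m$ by precomposing both sides with each inclusion $u_s$, $s \in M$. On one side, the defining property of $H$ together with axiom \cref{AG2} for $\gamma$ gives $\gamma_m \circ H \circ u_s = \gamma_m \circ \gamma_s \circ f = \gamma_{ms} \circ f$. On the other side, the definition \cref{definicao-psi} of $\psi_m$ (so that $\psi_m \circ u_s = u_{ms}$) together with, once more, the defining property of $H$ gives $H \circ \psi_m \circ u_s = H \circ u_{ms} = \gamma_{ms} \circ f$. As these composites agree for every $s \in M$, the universal property of $\coprod_{m \in M} X$ yields $\gamma_m \circ H = H \circ \psi_m$, so $H$ is a datum morphism, and hence a morphism from $\psi$ to $\gamma$ in $\act{\C}$.

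There is no real obstacle here. The only point deserving a word is the reduction step: one must observe that for a datum morphism whose source is a \emph{global} action, \cref{morfismo-de-dados-para-global} collapses the general commuting diagram \cref{diagrama-G-datum} to the naturality-type square $\gamma_m \circ H = H \circ \psi_m$. Everything after that is the routine ``check on summands'' computation, using only associativity of composition in $\C$ and the global-action axiom \cref{AG2}.
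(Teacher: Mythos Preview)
Your proof is correct and follows essentially the same approach as the paper: both reduce the claim via \cref{morfismo-de-dados-para-global} to the identity $\gamma_m \circ H = H \circ \psi_m$ and verify it by precomposing with the coproduct inclusions $u_s$, using $\psi_m \circ u_s = u_{ms}$ and \cref{AG2}. The only difference is cosmetic (you call the morphism $H$ rather than $\Gamma$ and compute the two sides separately instead of in one chain).
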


\begin{proof}
    For the simplicity of notation, let $\Gamma = \coprod_{m \in M} (\gamma_m \circ f)$. Then for each $s \in M$ we have
    $$\Gamma \circ u_s = \gamma_s \circ f.$$
    Therefore, given $m,s \in M$, by \cref{definicao-psi} and the fact that $\gamma$ is a global action we have
\begin{splii}
    (\Gamma \circ \psi_m) \circ u_s &= \Gamma \circ (\psi_m \circ u_s) = \Gamma \circ u_{ms} = \gamma_{ms} \circ f = (\gamma_m \circ \gamma_s) \circ f \\
    &= \gamma_m \circ (\gamma_s \circ f) = \gamma_m \circ (\Gamma \circ u_s)
    = (\gamma_m \circ \Gamma) \circ u_s,
\end{splii}
so that $\Gamma \circ \psi_m = \gamma_m \circ \Gamma$.

Thus, by \cref{morfismo-de-dados-para-global}, $\Gamma$ is a datum morphism from $\psi$ to $\gamma$, as desired.
\end{proof}

In view of \cref{H-esta-bem-definida} we can define the following map.

\begin{definicao}\label{function-definition-H}

Let $\gamma$ be a global action of $M$ on $Z \in \C$. Define $H_{X,\gamma}: \Hom{\C}(X,Z) \rightarrow \Hom{\act{\C}}(\psi,\gamma)$ by
\begin{align}\label{function-definition-H-explicita}
    H_{X,\gamma}(f) = \coprod_{m \in M} (\gamma_m \circ f)
\end{align}
 for any $f \in \Hom{\C}(X,Z)$.
\end{definicao}

\begin{proposicao}\label{bijection-morphisms-morphisms-of-coproduct-data}
Let $\gamma$ be a global action of $M$ on $Z \in \C$. Then $H_{X,\gamma}$ given by \cref{function-definition-H-explicita} is a bijection whose inverse is
\begin{align}\label{inversa-H-definicao}
    \Hom{\act{\C}}(\psi,\gamma)\ni\Gamma \mapsto \Gamma \circ u_e \in \Hom{\C}(X,Z).
\end{align}
\end{proposicao}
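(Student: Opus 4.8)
The plan is to show that the two assignments in \cref{function-definition-H-explicita} and \cref{inversa-H-definicao} are mutually inverse. Write $\Phi$ for the map $\Gamma \mapsto \Gamma \circ u_e$ from $\Hom{\act{\C}}(\psi,\gamma)$ to $\Hom{\C}(X,Z)$; by \cref{H-esta-bew-definida} the map $H_{X,\gamma}$ is well defined, so it remains to verify $\Phi \circ H_{X,\gamma} = \id$ on $\Hom{\C}(X,Z)$ and $H_{X,\gamma} \circ \Phi = \id$ on $\Hom{\act{\C}}(\psi,\gamma)$.

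For the first composite, fix $f \in \Hom{\C}(X,Z)$. By the defining property of the coproduct morphism $\coprod_{m \in M}(\gamma_m \circ f)$ with respect to the inclusions $u_m$ from \cref{definicao-morfismos-de-inclusao}, we have $H_{X,\gamma}(f) \circ u_e = \gamma_e \circ f = f$, since $\gamma$ is a global action and hence $\gamma_e = \id_Z$. Thus $\Phi(H_{X,\gamma}(f)) = f$.

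For the second composite, fix $\Gamma \in \Hom{\act{\C}}(\psi,\gamma)$ and set $f = \Gamma \circ u_e$. I must show $\coprod_{m \in M}(\gamma_m \circ f) = \Gamma$. By the universal property of the coproduct $\coprod_{m \in M} X$, it suffices to check that the two morphisms agree after precomposition with each inclusion $u_s$, $s \in M$; that is, $\gamma_s \circ f = \Gamma \circ u_s$ for all $s$. Since $\Gamma$ is a datum morphism from $\psi$ to $\gamma$ between global actions, \cref{morfismo-de-dados-para-global} gives $\Gamma \circ \psi_s = \gamma_s \circ \Gamma$. Combining this with the identity $\psi_s \circ u_e = u_{se} = u_s$ coming from \cref{definicao-psi}, we obtain
\begin{equation*}
    \gamma_s \circ f = \gamma_s \circ \Gamma \circ u_e = \Gamma \circ \psi_s \circ u_e = \Gamma \circ u_s,
\end{equation*}
as required. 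Hence $H_{X,\gamma} \circ \Phi = \id$, so $H_{X,\gamma}$ is a bijection with inverse $\Phi$. The main point to get right is the correct invocation of \cref{morfismo-de-dados-para-global} together with the bookkeeping identity $\psi_s \circ u_e = u_s$; everything else is a direct appeal to the relevant universal properties.
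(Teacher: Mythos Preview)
Your proof is correct and follows essentially the same approach as the paper's: both verify that the two assignments are mutual inverses by evaluating $H_{X,\gamma}(f)\circ u_e=\gamma_e\circ f=f$ for one direction, and for the other direction checking equality after precomposition with each $u_s$ via the identities $\Gamma\circ\psi_s=\gamma_s\circ\Gamma$ and $\psi_s\circ u_e=u_s$. (Note the small typo in your cross-reference: it should be \texttt{H-esta-bem-definida}.)
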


\begin{proof}
   To simplify the notation, denote $H_{X,\gamma}$ by $H$. Let us verify that the map $G$ defined in \cref{inversa-H-definicao} is the inverse of $H$.

Let $f \in \Hom{\C}(X,Z)$. Then by \cref{inversa-H-definicao,function-definition-H-explicita}
$$G(H(f))=\coprod_{m\in M}(\gamma_m\circ f)\circ u_e=\gamma_e\circ f=f.$$

On the other hand, let $\Gamma \in \Hom{\act{\C}}(\psi,\gamma)$. We have by \cref{inversa-H-definicao,function-definition-H-explicita}
\begin{align}\label{H(M(gamma))-explicitado}
    H(G(\Gamma))=\coprod_{m\in M}(\gamma_m\circ(\Gamma\circ u_e)).
\end{align}

As $\Gamma$ is a datum morphism, for each $m\in M$ we have $\Gamma \circ \psi_m = \gamma_m \circ \Gamma$, so, by \cref{definicao-psi,H(M(gamma))-explicitado},
    $$\Gamma \circ u_m = \Gamma \circ \psi_m \circ u_e = \gamma_m \circ \Gamma \circ u_e = H(G(\Gamma)) \circ u_m.$$
    Thus, $\Gamma = H(G(\Gamma))$.
    %
\end{proof}

\red{\begin{rem}
    Let $U$ be the forgetful functor from $\act{\C}$ to $\C$ and $L$ the functor that maps $X \in \C$ to the global action of $M$ on $\coprod_{m \in M} X$ by shift of indices and $f \in \Hom{\C}(X,Y)$ to $\coprod_{m \in M} (v_m \circ f) \in \Hom{\act{\C}}(L(X),L(Y))$ (where the morphisms $v_m$ are the canonical inclusions of $Y$ into $\coprod_{m \in M} Y$). Then the isomorphisms $H_{X,\gamma}$ form a natural isomorphism
    $$H : \Hom{\C}(\_,U(\_)) \to \Hom{\act{\C}}(L(\_),\_),$$
    so $U$ and $L$ are adjoint functors.
\end{rem}}


\begin{lema}\label{H-da-composicao}
    Let $\beta$ and $\gamma$ be global actions of $M$ on $Y \in \C$ and $Z \in \C$, respectively, $g \in \Hom{\act{\C}}(\beta,\gamma)$ and $f\in\Hom{\C}(X,Y)$. Then
    \begin{equation}\label{H-g-circ-f=g-circ-H-f}
        H_{X,\gamma}(g \circ f) = g \circ H_{X,\beta}(f).
    \end{equation}
\end{lema}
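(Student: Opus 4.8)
The plan is to verify the identity \cref{H-g-circ-f=g-circ-H-f} by precomposing both sides with the coproduct inclusions $u_m : X \to \coprod_{m \in M} X$ from \cref{definicao-morfismos-de-inclusao} and invoking the uniqueness clause in the definition of a coproduct of morphisms. First I would observe that both sides of \cref{H-g-circ-f=g-circ-H-f} are morphisms from $\psi$ to $\gamma$ in $\act{\C}$: the left-hand side $H_{X,\gamma}(g\circ f)$ is such a morphism by \cref{H-esta-bem-definida}, and the right-hand side is the composite in $\act{\C}$ of $H_{X,\beta}(f)\in\Hom{\act{\C}}(\psi,\beta)$ with $g\in\Hom{\act{\C}}(\beta,\gamma)$. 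In particular both are morphisms $\coprod_{m\in M}X\to Z$ in $\C$, so it suffices to show that they agree after composing with each $u_m$.

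By \cref{function-definition-H} we have $H_{X,\gamma}(g\circ f)\circ u_m=\gamma_m\circ g\circ f$ and $H_{X,\beta}(f)\circ u_m=\beta_m\circ f$, so that $\bigl(g\circ H_{X,\beta}(f)\bigr)\circ u_m=g\circ\beta_m\circ f$. The one nontrivial ingredient is that $g$ is a datum morphism from the global action $\beta$ to the global action $\gamma$; hence \cref{morfismo-de-dados-para-global} yields $g\circ\beta_m=\gamma_m\circ g$ for every $m\in M$. Combining these,
$$\bigl(g\circ H_{X,\beta}(f)\bigr)\circ u_m=g\circ\beta_m\circ f=\gamma_m\circ g\circ f=H_{X,\gamma}(g\circ f)\circ u_m$$
for all $m\in M$, and therefore $g\circ H_{X,\beta}(f)=H_{X,\gamma}(g\circ f)$ by the universal property of $\coprod_{m\in M}X$.

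I do not expect any real obstacle here: the argument is just unwinding the definition of $H_{X,-}$ together with the characterization \cref{morfismo-de-dados-para-global} of datum morphisms into global actions applied to $g$. Alternatively, one could apply the inverse bijection $\Gamma\mapsto\Gamma\circ u_e$ of \cref{bijection-morphisms-morphisms-of-coproduct-data} to both sides, reducing \cref{H-g-circ-f=g-circ-H-f} to the trivial equality $g\circ f=\bigl(g\circ H_{X,\beta}(f)\bigr)\circ u_e=g\circ\bigl(H_{X,\beta}(f)\circ u_e\bigr)=g\circ f$; but the direct computation above is the cleaner route.
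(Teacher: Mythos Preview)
Your proof is correct and follows essentially the same approach as the paper: precompose both sides with the coproduct inclusions $u_m$, use the definition of $H_{X,-}$ together with the equivariance $\gamma_m\circ g=g\circ\beta_m$ coming from \cref{morfismo-de-dados-para-global}, and conclude by the universal property of the coproduct. The paper's proof is slightly terser, omitting the remark that both sides lie in $\Hom{\act{\C}}(\psi,\gamma)$ and the alternative route via \cref{bijection-morphisms-morphisms-of-coproduct-data}, but the argument is the same.
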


\begin{proof}
    Let $m \in M$. By \cref{morfismo-de-dados-para-global} we have $\gamma_m \circ g = g \circ \beta_m$, so by \cref{function-definition-H-explicita}
    $$H_{X,\gamma}(g \circ f) \circ u_m = \gamma_m \circ g \circ f = g \circ \beta_m \circ f = g \circ H_{X,\beta}(f) \circ u_m,$$
    whence \cref{H-g-circ-f=g-circ-H-f}.
\end{proof}
    

\begin{lema}\label{bijecao-morfismos-de-dados-morfismos-de-dados-do-coproduto-que-coequalizam}
Let $\gamma$ be a global action of $M$ on $Z \in \C$ and $f \in \Hom\C(X,Z)$. Then $f$ is a datum morphism from $\alpha$ to $\gamma$ if and only if
\begin{align}\label{H-A-gamma(f)-circ-p=H-A-gamma(f)-circ-q}
    H_{X, \gamma}(f) \circ p = H_{X, \gamma}(f) \circ q.
\end{align}

\end{lema}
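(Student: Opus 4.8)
The plan is to express both composites in \cref{H-A-gamma(f)-circ-p=H-A-gamma(f)-circ-q} componentwise over the coproduct $\coprod_{(m,n)\in M\times M}\dom\alpha_n$ and to compare the resulting family of identities with the characterization of datum morphisms to a global action given in \cref{morfismo-de-dados-para-global}.

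First I would abbreviate $\Xi=H_{X,\gamma}(f)=\coprod_{m\in M}(\gamma_m\circ f)$, so that $\Xi\circ u_s=\gamma_s\circ f$ for every $s\in M$ by \cref{function-definition-H-explicita}. Since two morphisms out of the coproduct $\coprod_{(m,n)\in M\times M}\dom\alpha_n$ agree precisely when they agree after precomposition with each inclusion $u_{(s,t)}$, \cref{H-A-gamma(f)-circ-p=H-A-gamma(f)-circ-q} is equivalent to the family of equalities $\Xi\circ p\circ u_{(s,t)}=\Xi\circ q\circ u_{(s,t)}$ for all $s,t\in M$. Unfolding the definitions \cref{definicao-p-e-q} of $p$ and $q$ together with \cref{definicao-morfismos-de-inclusao}, the left-hand side equals $\Xi\circ u_{st}\circ\iota_t=\gamma_{st}\circ f\circ\iota_t$ and the right-hand side equals $\Xi\circ u_s\circ\alpha_t=\gamma_s\circ f\circ\alpha_t$. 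Hence \cref{H-A-gamma(f)-circ-p=H-A-gamma(f)-circ-q} holds if and only if $\gamma_{st}\circ f\circ\iota_t=\gamma_s\circ f\circ\alpha_t$ for all $s,t\in M$.

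On the other hand, because $\gamma$ is a global action, \cref{morfismo-de-dados-para-global} says that $f$ is a datum morphism from $\alpha$ to $\gamma$ if and only if $\gamma_t\circ f\circ\iota_t=f\circ\alpha_t$ for all $t\in M$. It then remains to check that these two conditions coincide. If $f$ is a datum morphism, then composing $\gamma_t\circ f\circ\iota_t=f\circ\alpha_t$ with $\gamma_s$ on the left and using \cref{AG2} in the form $\gamma_s\circ\gamma_t=\gamma_{st}$ gives $\gamma_{st}\circ f\circ\iota_t=\gamma_s\circ f\circ\alpha_t$; conversely, setting $s=e$ and invoking \cref{AG1}, i.e.\ $\gamma_e=id_Z$, recovers $\gamma_t\circ f\circ\iota_t=f\circ\alpha_t$ for all $t\in M$.

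The whole argument is just bookkeeping; the only mildly delicate point is the translation of the single identity \cref{H-A-gamma(f)-circ-p=H-A-gamma(f)-circ-q} into its componentwise form, which is nothing but the universal property of the coproduct, so I do not expect any real obstacle.
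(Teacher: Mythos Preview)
Your argument is correct and follows essentially the same route as the paper: both unfold $H_{X,\gamma}(f)\circ p$ and $H_{X,\gamma}(f)\circ q$ componentwise via the coproduct inclusions and the definitions \cref{definicao-p-e-q}, obtaining the family of identities $\gamma_{st}\circ f\circ\iota_t=\gamma_s\circ f\circ\alpha_t$, and then match this with the characterization from \cref{morfismo-de-dados-para-global} using \cref{AG1,AG2}. Your presentation is slightly more streamlined in that you first isolate the equivalent componentwise condition before checking both implications, whereas the paper treats the two directions separately, but the substance is identical.
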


\begin{proof}
    Write $H=H_{X,\gamma}$ for short. Assume that $f$ is a datum morphism from $\alpha$ to $\gamma$. For all $m \in M$, by \cref{morfismo-de-dados-para-global} we have
    \begin{equation}\label{gammag-f-iotag=f-alphag}
        \gamma_m \circ f \circ \iota_m = f \circ \alpha_m,
    \end{equation}
and by \cref{function-definition-H-explicita} we have
\begin{equation}\label{H(f)-ug=gammag-f}
    H(f) \circ u_m = \gamma_m \circ f.
\end{equation}

By \cref{H(f)-ug=gammag-f,definicao-p-e-q} we get
\begin{equation}\label{H(f)-q-uab=gammab-f-alphab-1a}
    H(f) \circ q \circ u_{(s,t)} = H(f) \circ u_s \circ \alpha_t = \gamma_s \circ f \circ \alpha_t.
\end{equation}

Therefore, by \cref{gammag-f-iotag=f-alphag,H(f)-q-uab=gammab-f-alphab-1a,H(f)-ug=gammag-f,definicao-p-e-q}, and since $\gamma$ is a global action, for each $(s,t) \in M \times M$,
\begin{align*}
    H(f) \circ q \circ u_{(s,t)} &\overset{\cref{H(f)-q-uab=gammab-f-alphab-1a}}{=} \gamma_s \circ f \circ \alpha_t = \gamma_s \circ (f \circ \alpha_t) \overset{\cref{gammag-f-iotag=f-alphag}}{=} \gamma_s \circ (\gamma_t \circ f \circ \iota_t) \\
    &= (\gamma_s \circ \gamma_t) \circ f \circ \iota_t = \gamma_{st} \circ f \circ \iota_t = (\gamma_{st} \circ f) \circ \iota_t\\
    &\overset{\cref{H(f)-ug=gammag-f}}{=} (H(f) \circ u_{st}) \circ \iota_t = H(f) \circ (u_{st} \circ \iota_t) \overset{\cref{definicao-p-e-q}}{=} H(f) \circ p \circ u_{(s,t)},
\end{align*}
whence \cref{H-A-gamma(f)-circ-p=H-A-gamma(f)-circ-q}.

Conversely, assume that $\Gamma \coloneqq H(f)$ satisfies
\begin{equation}\label{Gamma-p=Gamma-q}
    \Gamma \circ p = \Gamma \circ q.
\end{equation}
By \cref{bijection-morphisms-morphisms-of-coproduct-data},
\begin{equation}\label{f=Gamma-ue}
    f = H(f) \circ u_e = \Gamma \circ u_e.
\end{equation}

It follows from \cref{f=Gamma-ue,Gamma-p=Gamma-q,definicao-p-e-q,H(f)-ug=gammag-f} that for all $m \in M$
\begin{splii}
    (\gamma_m \circ f) \circ \iota_m &\overset{\cref{H(f)-ug=gammag-f}}{=} (\Gamma \circ u_m) \circ \iota_m = \Gamma \circ (u_m \circ \iota_m) \overset{\cref{definicao-p-e-q}}{=} \Gamma \circ (p \circ u_{(e,m)}) \\
    &= (\Gamma \circ p) \circ u_{(e,m)} \overset{\cref{Gamma-p=Gamma-q}}{=} (\Gamma \circ q) \circ u_{(e,m)} = \Gamma \circ (q \circ u_{(e,m)}) \\
    &\overset{\cref{definicao-p-e-q}}{=} \Gamma \circ (u_e \circ \alpha_m) = (\Gamma \circ u_e) \circ \alpha_m \overset{\cref{f=Gamma-ue}}{=} f \circ \alpha_m,
\end{splii}
and, thus, by \cref{morfismo-de-dados-para-global}, $f$ is a datum morphism from $\alpha$ to $\gamma$.
\end{proof}

\begin{teorema}\label{reflector<->coequalizer}
    The following statements hold:



\begin{enumerate}
\item\label{teorema-reflexao-sse-coeq-item-i} If \begin{tikzcd}[column sep=small, cramped] \alpha \arrow[r, "r"] & \beta \end{tikzcd} is a reflection of $\alpha$ in $\act{\C}$, then $H_{X,\beta}(r)$ is a coequalizer of $p$ and $q$ in $\act{\C}$.

\item\label{teorema-reflexao-sse-coeq-item-ii} If \begin{tikzcd}[column sep=small, cramped] \psi \arrow[r, "c"] & \beta \end{tikzcd} is a coequalizer of $p$ and $q$ in $\act{\C}$, then $c \circ u_e$ is a reflection of $\alpha$ in $\act{\C}$.

\end{enumerate}

In particular, $\alpha$ has a reflection in $\act{\C}$ if and only if $p$ and $q$ have a coequalizer in $\act{\C}$.
\end{teorema}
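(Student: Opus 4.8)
The plan is to deduce both parts formally from three facts established above: the bijection $H_{X,\gamma}\colon\Hom{\C}(X,Z)\to\Hom{\act{\C}}(\psi,\gamma)$ with inverse $\Gamma\mapsto\Gamma\circ u_e$ (\cref{bijection-morphisms-morphisms-of-coproduct-data}), its compatibility with post-composition by a morphism of global actions, $H_{X,\gamma}(g\circ f)=g\circ H_{X,\beta}(f)$ (\cref{H-da-composicao}), and the characterization of the datum morphisms $\alpha\to\gamma$ as precisely those $f\colon X\to Z$ for which $H_{X,\gamma}(f)$ coequalizes $p$ and $q$ (\cref{bijecao-morfismos-de-dados-morfismos-de-dados-do-coproduto-que-coequalizam}). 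I will also use throughout the identity $H_{X,\gamma}(f)\circ u_e=f$ for $f\colon X\to Z$, which is the inverse formula of \cref{bijection-morphisms-morphisms-of-coproduct-data}.

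For \cref{teorema-reflexao-sse-coeq-item-i}, suppose $r\colon\alpha\to\beta$ is a reflection, with $\beta$ acting on $Y$. First I would note that, $r$ being a datum morphism, \cref{bijecao-morfismos-de-dados-morfismos-de-dados-do-coproduto-que-coequalizam} yields $H_{X,\beta}(r)\circ p=H_{X,\beta}(r)\circ q$, so $H_{X,\beta}(r)\in\Hom{\act{\C}}(\psi,\beta)$ is a legitimate candidate. For the universal property, given $g\in\Hom{\act{\C}}(\psi,\gamma)$ with $g\circ p=g\circ q$ ($\gamma$ acting on $Z$), I would set $f\coloneqq g\circ u_e$, so that $H_{X,\gamma}(f)=g$; then \cref{bijecao-morfismos-de-dados-morfismos-de-dados-do-coproduto-que-coequalizam} makes $f$ a datum morphism $\alpha\to\gamma$, the reflection property produces a unique datum morphism $h\colon\beta\to\gamma$ with $h\circ r=f$, and \cref{H-da-composicao} gives $h\circ H_{X,\beta}(r)=H_{X,\gamma}(h\circ r)=H_{X,\gamma}(f)=g$. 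For uniqueness of $h$ among morphisms of global actions, from $h'\circ H_{X,\beta}(r)=g$ I would post-compose with $u_e$ and use $H_{X,\beta}(r)\circ u_e=r$ to get $h'\circ r=f$, whence $h'=h$ by the uniqueness clause of the reflection. Thus $H_{X,\beta}(r)$ is a coequalizer of $p$ and $q$ in $\act{\C}$.

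For \cref{teorema-reflexao-sse-coeq-item-ii}, suppose $c\colon\psi\to\beta$ is a coequalizer of $p$ and $q$ in $\act{\C}$. I would first observe that $H_{X,\beta}(c\circ u_e)=c$ by \cref{bijection-morphisms-morphisms-of-coproduct-data}, so $c\circ p=c\circ q$ and \cref{bijecao-morfismos-de-dados-morfismos-de-dados-do-coproduto-que-coequalizam} show $c\circ u_e$ is a datum morphism $\alpha\to\beta$. Given any datum morphism $f\colon\alpha\to\gamma$, \cref{bijecao-morfismos-de-dados-morfismos-de-dados-do-coproduto-que-coequalizam} says $H_{X,\gamma}(f)$ coequalizes $p$ and $q$, so the coequalizer property yields a unique $f'\in\Hom{\act{\C}}(\beta,\gamma)$ with $f'\circ c=H_{X,\gamma}(f)$, and then $f'\circ(c\circ u_e)=H_{X,\gamma}(f)\circ u_e=f$. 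For uniqueness, from $f''\circ(c\circ u_e)=f$ I would deduce $f''\circ c=f''\circ H_{X,\beta}(c\circ u_e)=H_{X,\gamma}(f''\circ(c\circ u_e))=H_{X,\gamma}(f)$ via \cref{H-da-composicao}, whence $f''=f'$ by the uniqueness clause of the coequalizer. Hence $c\circ u_e$ is a reflection of $\alpha$ in $\act{\C}$, and the concluding ``if and only if'' is then immediate from \cref{teorema-reflexao-sse-coeq-item-i,teorema-reflexao-sse-coeq-item-ii}.

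Since the whole argument is a formal transport of universal properties along the bijection $H$, I do not anticipate a genuine obstacle; the only point requiring care is the bookkeeping of which objects live in $\C$ and which in $\act{\C}$ — keeping straight that a reflection of $\alpha$ is tested against datum morphisms out of $\alpha$ while a coequalizer of $p,q$ is tested against morphisms of global actions out of $\psi$ — the translation between the two being precisely \cref{bijecao-morfismos-de-dados-morfismos-de-dados-do-coproduto-que-coequalizam} together with \cref{bijection-morphisms-morphisms-of-coproduct-data,H-da-composicao}.
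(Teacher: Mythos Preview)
Your proposal is correct and follows essentially the same approach as the paper's own proof: both parts are obtained by transporting the universal property along the bijection $H_{X,\gamma}$, using \cref{bijection-morphisms-morphisms-of-coproduct-data}, \cref{H-da-composicao}, and \cref{bijecao-morfismos-de-dados-morfismos-de-dados-do-coproduto-que-coequalizam} in exactly the way you describe. The only differences are cosmetic (your $g,h$ are the paper's $f,f'$ in part \cref{teorema-reflexao-sse-coeq-item-i}).
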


\begin{proof}
    \cref{teorema-reflexao-sse-coeq-item-i} Assume \begin{tikzcd}[column sep=small, cramped] \alpha \arrow[r, "r"] & \beta \end{tikzcd} is a reflection of $\alpha$ in $\act{\C}$. Let us check that $H_{X,\beta}(r)$ is a coequalizer of $p$ and $q$ in $\act{\C}$.

   By \cref{bijecao-morfismos-de-dados-morfismos-de-dados-do-coproduto-que-coequalizam,H-esta-bem-definida}, $H_{X,\beta}(r)$ is a datum morphism from $\psi$ to $\beta$ such that $H_{X,\beta}(r) \circ p = H_{X,\beta}(r) \circ q$.
    
    Let \begin{tikzcd}[column sep=small, cramped] \psi \arrow[r, "f"] & \gamma \end{tikzcd} be a datum morphism such that $f \circ p = f \circ q$. We must show that there exists a unique datum morphism $f' : \beta \rightarrow \gamma$ such that
    \begin{equation}\label{f-=-f'-circ-H-r}
        f = f' \circ H_{X,\beta}(r).
    \end{equation}
    
    By \cref{bijecao-morfismos-de-dados-morfismos-de-dados-do-coproduto-que-coequalizam}, $H_{X,\gamma}^{-1}(f)=f \circ u_e$ is a datum morphism from $\alpha$ to $\gamma$. Since $r$ is a reflection of $\alpha$ in $\act{\C}$, there exists a unique datum morphism $f' : \beta \rightarrow \gamma$ such that
    \begin{align}\label{f'-circ-r=f-circ-u_e}
        f' \circ r = f \circ u_e.
    \end{align}

    This way, by \cref{bijection-morphisms-morphisms-of-coproduct-data,f'-circ-r=f-circ-u_e,H-da-composicao} we have
    \begin{equation*}
        f = H_{X,\gamma}(f \circ u_e) = H_{X,\gamma}(f' \circ r) = f' \circ H_{X,\beta}(r).
    \end{equation*}

    Note that $f'$ is the unique datum morphism satisfying \cref{f-=-f'-circ-H-r}. Indeed, if $f''$ is a datum morphism from $\beta$ to $\gamma$ such that $f'' \circ H_{X,\beta}(r) = f$, then by \cref{bijection-morphisms-morphisms-of-coproduct-data}
    $$f'' \circ r = f'' \circ (H_{X,\beta}(r) \circ u_e) = (f'' \circ H_{X,\beta}(r)) \circ u_e = f \circ u_e,$$
    and by the uniqueness of $f'$ in \cref{f'-circ-r=f-circ-u_e}, we have $f'' = f'$. Thus, $H_{X,\beta}(r)$ is a coequalizer of $p$ and $q$ in $\act{\C}$.

    \cref{teorema-reflexao-sse-coeq-item-ii} Let $c$ be a coequalizer of $p$ and $q$ in $\act{\C}$. We shall verify that $c \circ u_e$ is a reflection of $\alpha$ in $\act{\C}$.

    By \cref{bijecao-morfismos-de-dados-morfismos-de-dados-do-coproduto-que-coequalizam}, $H^{-1}(c) = c \circ u_e$ is a datum morphism from $\alpha$ to $\beta$, since $c$ is a datum morphism such that $c \circ p = c \circ q$.

    So, let \begin{tikzcd}[column sep=small, cramped] \alpha \arrow[r, "f"] & \gamma \end{tikzcd} be a datum morphism. We must show that there exists a unique datum morphism $f'$ from $\beta$ to $\gamma$ such that
    \begin{equation}\label{f-=-f'-circ-c-circ-ue}
        f = f' \circ (c \circ u_e).
    \end{equation}

    \cref{bijecao-morfismos-de-dados-morfismos-de-dados-do-coproduto-que-coequalizam} tells us that $H_{X,\gamma}(f)$ is a datum morphism from $\psi$ to $\gamma$ such that $H_{X,\gamma}(f) \circ p = H_{X,\gamma}(f) \circ q$. So, since $c$ is a coequalizer of $p$ and $q$, there exists a unique datum morphism $f'$ from $\beta$ to $\gamma$ such that 
    \begin{align}\label{H_A_gamma(f)=f'-circ-c}
        H_{X,\gamma}(f) = f' \circ c.
    \end{align}
    This morphism is such that
    $$f = H_{X,\gamma}(f) \circ u_e = (f' \circ c) \circ u_e = f' \circ (c \circ u_e).$$

    Moreover, $f'$ is the unique datum morphism satisfying \cref{f-=-f'-circ-c-circ-ue}. Indeed, if there were $f''$ with $f = f'' \circ (c \circ u_e)$, then by \cref{H-da-composicao,bijection-morphisms-morphisms-of-coproduct-data}
    $$H_{X,\gamma}(f) = H_{X,\gamma}(f'' \circ (c \circ u_e)) = f'' \circ H_{X,\beta}(c \circ u_e) = f'' \circ c,$$
    and, due to the uniqueness of $f'$ in \cref{H_A_gamma(f)=f'-circ-c}, we would have $f'' = f'$.
\end{proof}

\red{In the following remark we compare \cref{reflector<->coequalizer,equivalencia-glob-universal-glob} with (the dual of) \cite[Theorem 3.5]{saracco2022globalization}.}

\red{\begin{rem}
     In $\Set$ the coproducts $\coprod_{m \in M} X$ and $\coprod_{(m,n) \in M \times M} \dom\alpha_n$ are isomorphic to $X \times M$ and $(X \bullet M) \times M$ (as in the notation of \cite{saracco2022globalization}), respectively. Via these isomorphisms, the morphisms $p$ and $q$ in \cref{definicao-p-e-q} correspond to (the duals of) the lower and upper arrows in \cite[diagram (6)]{saracco2022globalization}, respectively.
\end{rem}}


\section{Application to partial monoid actions on sets}\label{sec-pact-Set}

Fix $\alpha(m) = [\dom\alpha_m,\iota_m,\alpha_m]$ a partial action datum of $M$ on a set $X$, with $\dom\alpha_m \subseteq X$ and $\iota_m$ the respective inclusion. Denote by $\approx$ the equivalence relation on $M \times X$ generated by $\sim$, where
     \begin{align}\label{acao-parcial-rel-equiv-descricao}
     (m,x) \sim (n,y)\iff \exists m' \in M \text{ such that } m = nm', x \in \dom\alpha_{m'} \text{ and } y = \alpha_{m'}(x).
     \end{align}
Let $Y = (M \times X)/{\approx}$ and denote by $[m,x]$ the $\approx$-equivalence class of $(m,x)$.

\begin{lema}\label{reflexao-dado-acao-parcial-set}

     
     The maps $\beta_n : Y \to Y$ given by 
     \begin{equation}\label{beta-n-[m-x]=[nm-x]}
         \beta_n([m,x]) = [nm,x]
     \end{equation}
    define a global action $\beta$ of $M$ on $Y$ and the map $\iota : X \to Y$ given by 
    \begin{equation*}
        \iota(x) = [e,x]
    \end{equation*}
    is a reflection $\alpha \to \beta$ of $\alpha$ in $\act{\Set}$.
\end{lema}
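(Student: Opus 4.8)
The plan is to compute an explicit colimit of the functor $F$ associated to $\alpha$ (\cref{funtor-associado}), identify it with $Y$, and then quote \cref{colimite-implica-reflexao}. Recall that a colimit of a small diagram $F\colon I\to\Set$ may be realised as $\operatorname{colim} F=\bigl(\coprod_{i\in I}F(i)\bigr)/{\approx'}$, where $\approx'$ is the equivalence relation generated by identifying, for each arrow $\phi\colon i\to j$ of $I$ and each $a\in F(i)$, the element $a$ in the copy $F(i)$ with $F(\phi)(a)$ in the copy $F(j)$; the colimit cocone $\eta_i\colon F(i)\to\operatorname{colim} F$ is the evident map.

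First I would unwind this for our $F$. Here $\coprod_{i\in I}F(i)=\bigl(\coprod_{(m,n)\in M\times M}\dom\alpha_n\bigr)\sqcup\bigl(\coprod_{m\in M}X\bigr)$; the arrow $(m,n)\to mn$ identifies $x\in\dom\alpha_n$ (in the $(m,n)$-copy) with $\iota_n(x)=x$ (in the $mn$-copy of $X$), and the arrow $(m,n)\to m$ identifies $x$ with $\alpha_n(x)$ (in the $m$-copy of $X$). Hence every element of the $(m,n)$-copies is identified with one of $\coprod_{m\in M}X=M\times X$, so $\operatorname{colim} F$ is a quotient of $M\times X$; and, chasing the two identifications through the $(m,n)$-copy, the relation induced on $M\times X$ is generated by the pairs $\bigl((mn,x),(m,\alpha_n(x))\bigr)$ for $m,n\in M$ and $x\in\dom\alpha_n$. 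Relabelling the first index as $n$ and the second as $m'$, this family of generating pairs is literally the relation $\sim$ of \cref{acao-parcial-rel-equiv-descricao}, so $\operatorname{colim} F=(M\times X)/{\approx}=Y$, with colimit cocone $\eta_s\colon X\to Y$, $x\mapsto[s,x]$, for $s\in M$, and $\eta_{(m,n)}\colon\dom\alpha_n\to Y$, $x\mapsto[mn,x]=[m,\alpha_n(x)]$, for $(m,n)\in M\times M$.

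Then I would invoke \cref{colimite-implica-reflexao}, which gives that $\eta_e\colon\alpha\to\beta$ is a reflection of $\alpha$ in $\act{\Set}$, where $\beta$ is the global action associated to $\eta$. On the one hand $\eta_e(x)=[e,x]=\iota(x)$; on the other hand the maps $\eta_s$ ($s\in M$) are jointly surjective, so by \cref{beta_g-circ-eta_a} the morphism $\beta_n$ is given by $\beta_n([s,x])=\beta_n(\eta_s(x))=\eta_{ns}(x)=[ns,x]$. Taking $s=m$ this is precisely \cref{beta-n-[m-x]=[nm-x]} (and, as a by-product, this formula is well defined and defines a global action). This completes the proof.

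The only step requiring genuine care is the identification $\operatorname{colim} F=Y$: one must check that any zig-zag of the generating relation on $\coprod_{i\in I}F(i)$ joining two elements of $M\times X$ can be straightened into a zig-zag of $\sim$ (so that $\approx'$ restricts on $M\times X$ to exactly $\approx$), and that $M\times X$ surjects onto $\operatorname{colim} F$; the rest is direct rewriting. Alternatively one can bypass colimits entirely and verify by hand that \cref{beta-n-[m-x]=[nm-x]} is well defined (the assignment $(m,x)\mapsto(nm,x)$ carries $\sim$-related pairs to $\sim$-related pairs) and defines a global action, that $\iota$ is a datum morphism (via \cref{morfismo-de-dados-para-global}), and that for any global action $\gamma$ on $Z$ and any datum morphism $f\colon\alpha\to\gamma$ the formula $f'([m,x])=\gamma_m(f(x))$ gives the unique datum morphism $\beta\to\gamma$ with $f'\circ\iota=f$ — its well-definedness resting on $f\circ\alpha_m=\gamma_m\circ f$ on $\dom\alpha_m$.
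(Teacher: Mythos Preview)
Your proposal is correct and follows essentially the same route as the paper: identify the colimit of $F$ with $Y=(M\times X)/{\approx}$ and then invoke the general machinery to get the reflection. The only cosmetic difference is that the paper computes the colimit via the coproduct--coequalizer decomposition of \cref{coprod-coequalizador-implica-colimite} (writing $p(m,n,x)=(mn,x)$, $q(m,n,x)=(m,\alpha_n(x))$ and taking the quotient by the relation they generate) and then quotes \cref{coeq-implica-reflexao}, whereas you compute the colimit directly from the generic formula for $\Set$-colimits and quote \cref{colimite-implica-reflexao}; the two arguments unwind to the same verification that the induced relation on $M\times X$ is exactly $\sim$.
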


\begin{proof}
    Consider the coproduct $\coprod_{m \in M} X = M \times X$ with inclusions $u_m : X \ni x \mapsto (m,x) \in M \times X$, and the coproduct $\coprod_{(m,n) \in M \times M} \dom\alpha_n = \{(m,n,x) : x \in \dom\alpha_n\} \eqqcolon M^2 \bullet X$ with inclusions $u_{(m,n)} : \dom\alpha_n \ni x \mapsto (m,n,x) \in M^2 \bullet X$. Then the maps $p,q : M^2 \bullet X \to M \times X$ from \cref{definicao-p-e-q} are given by
    $$p(m,n,x) = (mn,\iota_n(x)) = (mn,x)$$
    and
    $$q(m,n,x) = (m,\alpha_n(x)).$$

    The canonical projection $c$ of $M \times X$ onto its quotient by the equivalence relation generated by $\sim = \{(p(m,n,x), q(m,n,x)) : (m,n,x) \in M^2 \bullet X\}$ is a coequalizer of $p$ and $q$. It is a simple verification that $\sim$ coincides with \cref{acao-parcial-rel-equiv-descricao}, so $c$ is precisely the natural projection of $M \times X$ onto $Y$.
    
    Then the global action $\beta$ of $M$ on $Y$ from \cref{coeq-implica-reflexao} (see \cref{global-action-associated-to-coequalizer}) is given precisely by \cref{beta-n-[m-x]=[nm-x]}, with $\iota = c \circ u_e : \alpha \to \beta$ being a reflection of $\alpha$ in $\act{\Set}$.
\end{proof}

\begin{proposicao}\label{af-glob-iff-af-strong-for-C=Set}
    A partial action datum $\alpha$ in $\datum{\Set}$ has a universal globalization if and only if $\alpha \in \spact{\Set}$.
\end{proposicao}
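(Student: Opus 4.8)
The plan is to assemble the statement from the results already in place, so essentially no new computation is required. Throughout, write $\alpha(m)=[\dom\alpha_m,\iota_m,\alpha_m]$ with $\dom\alpha_m\sst X$ and $\iota_m$ the respective inclusion, as fixed at the start of this section.

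First I would prove the implication ``$\alpha\in\spact{\Set}$ $\Rightarrow$ $\alpha$ has a universal globalization''. By \cref{reflexao-dado-acao-parcial-set}, which applies to an arbitrary partial action datum, $\alpha$ has a reflection $\iota\colon\alpha\to\beta$ in $\act{\Set}$, say with $\beta$ acting on $Y=(M\times X)/{\approx}$. Now I would bring in the strongness hypothesis: by \cref{lema-strong-partial-action-diagrama-pullback-set}, diagram \cref{diagrama-equivalencia-glob-universal-glob} is a pullback in $\Set$ for every $m\in M$, i.e.\ condition \cref{glob-universal-glob-v} of \cref{equivalencia-glob-universal-glob} holds. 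Since a reflection of $\alpha$ in $\act{\Set}$ exists, \cref{equivalencia-glob-universal-glob} is applicable, and its implication \cref{glob-universal-glob-v}$\Rightarrow$\cref{glob-universal-glob-iii} (indeed $\Rightarrow$\cref{glob-universal-glob-ii}) yields a universal globalization of $\alpha$, namely $(\beta,\iota)$ itself.

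For the converse I would argue directly, without reference to the canonical reflection. Suppose $\alpha$ admits a universal globalization $(\beta,\iota)$, with $\beta$ a global action of $M$ on some $Y\in\Set$. By \cref{GU1} the pair $(\beta,\iota)$ is in particular a globalization of $\alpha$, so by \cref{globalizacao-def} the datum $\alpha$ is the restriction (in the sense of \cref{induced-partial-action-def}) of the global action $\beta$ to $X$ via the monomorphism $\iota$. By \cref{restricao-eh-acao-parcial,proposicao-restricao-eh-forte} (summarized in \cref{remark-restricao-eh-acao-parcial-forte}), every such restriction is a strong partial action, hence $\alpha\in\spact{\Set}$.

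There is no genuine obstacle remaining: the substantive inputs are \cref{reflexao-dado-acao-parcial-set} and \cref{lema-strong-partial-action-diagrama-pullback-set}, both already established, and the statement is just their synthesis with \cref{equivalencia-glob-universal-glob} together with the fact that restrictions of global actions are strong. The one point I would state carefully is the asymmetry of roles: the reflection of \cref{reflexao-dado-acao-parcial-set} exists for \emph{every} datum, and it is precisely strongness that, via the pullback criterion of \cref{lema-strong-partial-action-diagrama-pullback-set}, promotes the associated datum morphism to a genuine (universal) globalization.
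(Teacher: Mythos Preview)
Your proposal is correct and follows essentially the same route as the paper's proof: the ``only if'' direction is \cref{remark-restricao-eh-acao-parcial-forte}, and the ``if'' direction combines \cref{reflexao-dado-acao-parcial-set}, \cref{lema-strong-partial-action-diagrama-pullback-set} and \cref{equivalencia-glob-universal-glob} exactly as you describe. The only cosmetic difference is the order in which the two implications are treated.
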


\begin{proof}
    If $\alpha$ has a universal globalization, then $\alpha \in \spact{\Set}$ by \cref{remark-restricao-eh-acao-parcial-forte}.

    Conversely, by \cref{reflexao-dado-acao-parcial-set} $\alpha$ has a reflection in $\act{\C}$, which, since $\alpha \in \spact{\Set}$, by \red{\cite[Lemma 5.5]{PartMonoids}}, is such that \cref{diagrama-equivalencia-glob-universal-glob} is a pullback for all $m \in M$. Thus, by \cref{equivalencia-glob-universal-glob} $\alpha$ has a universal globalization.
\end{proof}

	\section*{Acknowledgements}
	The authors are grateful to Eliezer Batista, Ganna Kudryavtseva, Paolo Saracco and Joost Vercruysse for fruitful discussions and useful comments and to Mikhailo Dokuchaev for the references~\cite{Green-Marcos94,Palais}. \blue{We also thank the referee for the very careful reading and valuable suggestions that resulted in a significant improvement of the original manuscript.}
	 The first author was partially supported by CMUP, member of LASI, which is financed by national funds through FCT --- Fundação para a Ciência e a Tecnologia, I.P., under the project with reference UIDB/00144/2020.
        The second author was supported by FAPESC.
	\bibliography{Ref}{}
	\bibliographystyle{acm}
	
\end{document}